\newcommand\myshade{85}
\colorlet{mylinkcolor}{violet}
\colorlet{mycitecolor}{red}
\colorlet{myurlcolor}{cyan}
\newcolumntype{E}{>{\hsize=0.5cm \centering\arraybackslash}X}%
\newcolumntype{C}[1]{>{\hsize=#1\hsize \centering\arraybackslash}X}%
\numberwithin{equation}{section}
\newtheorem{theorem}{Theorem}[section]
\newtheorem{proposition}[theorem]{Proposition}
\newtheorem{proposition-definition}[theorem]{Proposition-Definition}
\newtheorem{corollary}[theorem]{Corollary}
\newtheorem{lemma}[theorem]{Lemma}
\theoremstyle{definition}
\newtheorem{remark}[theorem]{Remark}
\newtheorem{example}[theorem]{Example}
\newtheorem{definition}[theorem]{Definition}
\newcommand{\Hom}{\mathrm{Hom}}
\newcommand{\Ext}{\mathrm{Ext}}
\newcommand{\add}{\mathsf{add}}
\newcommand{\za}{\alpha}
\newcommand{\zb}{\beta}
\newcommand{\zD}{\Delta}
\newcommand{\zg}{\gamma}
\newcommand{\zG}{\Gamma}
\newcommand{\aaa}{{\bf{a}}}
\newcommand{\bbb}{{\bf{b}}}
\newcommand{\ccc}{{\bf{c}}}
\newcommand{\ddd}{{\bf{d}}}
\newcommand{\proj}{\operatorname{{\rm proj }}}
\newcommand{\pd}{\operatorname{{\rm pd }}}
\newcommand{\ma}{\operatorname{{\rm mod-A }}}
\newcommand{\maG}{\operatorname{{\rm mod-A_{\Gamma} }}}
\newcommand{\cals}{\mathcal{S}}
\newcommand{\calc}{\mathcal{C}}
\newcommand{\call}{\mathcal{L}}
\newcommand{\cald}{\mathcal{D}}
\newcommand{\calm}{\mathcal{M}}
\newcommand{\calp}{\mathcal{P}}
\newcommand{\calk}{\mathcal{K}}
\newcommand{\bbp}{\mathbb{P}}
\newcommand{\dba}{\cald^b(A)}
\newcommand{\ka}{\calk^b(A)}
\newcommand{\surf}{(\cals,\calm,\zD^*)}
\renewcommand{\P}{P^\bullet}
\definecolor{dark-green}{RGB}{14,150,2}
\definecolor{red}{RGB}{250,0,0}
\newcommand{\bpoint}{\circ}
\newcommand{\rpoint}{\color{red}{\bullet}}
\begin{document}

\title[Tilting-completion for gentle algebras]{Tilting-completion for gentle algebras}

\author{Wen Chang}
\address{School of Mathematics and Statistics, Shaanxi Normal University, Xi'an 710062, China}
\email{changwen161@163.com}

\keywords{}

\thanks{}

%

\subjclass[2010]{16D90, 
16E35, 
57M50}

\begin{abstract}
It is demonstrated that any almost-tilting module over a gentle algebra of rank $n$ is indeed partial-tilting, meaning it can be completed as a tilting module. Furthermore, such a module has at most $2n$ possible complements, thereby confirming a (modified) conjecture of Happel for the case of gentle algebras. Additionally, for any $n\geq 3$ and $1\leq m \leq n-2$, there always exists a (connected) gentle algebra with rank $n$ and a pre-tilting module of rank $m$ which is not partial-tilting. The tool we use is the surface model associated with the module category of a gentle algebra. 
\end{abstract}

\maketitle
\setcounter{tocdepth}{2} 

\tableofcontents

\section*{Introduction}\label{Introductions}
The tilting theory was originally introduced and developed within the representation theory of finite-dimensional algebras in the early 1980s. It is now regarded as an essential tool in various branches of mathematics, including finite and algebraic group theory, commutative and non-commutative algebraic geometry, and algebraic topology.
In \cite{BB79}, Brenner and Butler initiated tilting theory by generalizing reflection functors developed by Bernstein-Gelfand-Ponomarev \cite{BGP73} and Auslander-Platzeck-Reiten \cite{APR79} into so-called tilting functors. 
Happel and Ringel \cite{HR82} further refined the theory by relaxing and simplifying the axioms defining a tilting module, and completed the picture by considering additional functors.
This was further generalized by Miyashita \cite{M86} and Happel \cite{H87}, who introduced the concept of a general tilting module with finite projective dimension. Happel demonstrated that a tilting module induces a derived equivalence between the original algebra and the endomorphism algebra of the tilting module. Subsequently, Rickard \cite{R89} showed that tilting complexes, as a generalization of tilting modules, determine when two algebras have equivalent derived categories. 

In this paper, we consider tilting modules in the sense of Miyashita and Happel.
Let $A$ be a finite-dimensional algebra and $\ma$ be the category of finite-dimensional right modules over $A$. A module $T\in \ma$ is called a \emph{tilting module} if the following conditions hold:
\begin{enumerate}
	\item[(T1)] the projective dimension of $T$ is finite, that is $\pd T < +\infty$;
	\item[(T2)] $T$ is self-orthogonal, that is $\Ext^i_A(T,T)=0$ for any $i>0$;
	\item[(T3)] there exists $d\ge0$ and an exact sequence $0\to A_A\to T_0\to\cdots\to T_d\to0$ with $T_i\in\add T$ for all $0\leq i \leq d$, where $\add T$ is the category of direct sums of direct summands of $T$.
\end{enumerate}

If only the first two conditions are satisfied, we refer to $T$ as a \emph{pre-tilting module}. A pre-tilting module is called a \emph{partial-tilting module} if it can be completed as a tilting module. We assume throughout that any pre-tilting, tilting, or partial-tilting module is multiplicity-free; in other words, any two different direct summands are not isomorphic. A pre-tilting module is called \emph{almost-tilting}, if it has $n-1$ indecomposable direct summands, where $n=|A|$ denotes the {\em rank} of $A$, that is the number of non-isomorphic indecomposable projective modules over $A$.

We note that the terminology introduced above has varying interpretations across different references. For example, a pre-tilting module is referred to as a partial generalized tilting module in \cite{RS89} and as a partial-tilting module in \cite{CHU94}. In this paper, we adopt the more recent conventions, such as the definition of a partial-tilting module that aligns with that in \cite{U16}.
It is important to clarify that in our terminology, an almost-tilting module is not necessarily partial-tilting. Indeed an almost-tilting module that qualifies as partial-tilting is called an almost complete tilting module in \cite{CHU94}. To minimize ambiguity, we refer to a partial-tilting module with $n-1$ indecomposable direct summands a {\em maximal partial-tilting module}.

For any $1\leq m \leq n$, it is natural to ask the following question, see \cite{B81,HR82}:

{\bf(Cm): Does the conditions (T1), (T2), and $|M|=m$ imply that $M$ can be completed as a tilting module?}
 
In particular, $\bf{(Cn)}$ is equivalent to say that $(T3)$ can be replaced by the condition $|M|=n$.
We briefly recall the history of these tilting-completion questions. For the classical tilting module, that is, when $\pd T\leq 1$ and $d=1$ in $(T3)$, any pre-tilting module can be completed as a tilting module, which is called Bongartz's key lemma \cite{B81}. Thus the answer of $\bf{(Cm)}$ is positive for any $1\leq m \leq n$. 

For a general tilting module as defined above, Rickard and Schofield posted the question {\bf (Cm)} in \cite{RS89}, which has been proven true for any representation-finite algebras.
However, a counterexample exists for the representation-infinite case, which is a special case of the algebra given in Figure \ref{fig:not partial2} when $n=3$. 
In the proof of the result mentioned above in \cite{RS89}, the subcategory $$\calc=\{X\in \ma, \Ext^i_A(X,M)=0, \forall i>0, \pd M <\infty\}$$
serves a critical role, indeed \cite{CHU94} establishes that for a pre-tilting module $M$, if $\calc$ is contravariantly finite, then $M$ is a partial-tilting module. 
Notably, for the counterexample provided in \cite{RS89}, $n=3$ and $m=1$. This gives a negative answer to the question ${\bf (Cn-2)}$.
To the best of the author's knowledge, the question ${\bf (Cn-1)}$, which asks whether an almost-tilting module is also partial-tilting, remains open for general algebras.
It is also worth mentioning that if one considers more general infinitely generated modules, then any (finite-dimensional) pre-tilting module is partial-tilting, that is, it can be completed as a (possibly infinitely dimensional) tilting module, see \cite{CT95,AC02}.

Let $M$ be a partial-tilting module, and let $N$ be an $A$-module such that $M\oplus N$ is a tilting module and the intersection $\add M\cap \add N=0$. Then we call $N$ a {\em complement} to $M$.
For a maximal partial-tilting module $M$, it is natural to consider the number of its (non-isomorphic) complements.
If the algebra $A$ is hereditary, then there are at most two complements to a maximal partial-tilting module \cite{RS90,U90}, with exactly two existing if and only if the maximal partial-tilting module is sincere, see \cite{HU89}.
For the classical tilting module over a general finite dimension algebra, a maximal partial-tilting module has at most two complements, and there are exactly two if and only if the maximal partial-tilting module is faithful, see \cite{RS91,H92}. 
For the general case — namely, for general tilting modules over arbitrary finite-dimensional algebras — \cite{CHU94} proves that if a maximal partial-tilting module $M$ is not faithful and $\calc$ is contravariantly finite, then $M$ has a unique indecomposable complement. Conversely, if $M$ is faithful, it may have multiple complements, even if $\calc$ is contravariantly finite.

It is interesting that the problem of whether a maximal partial-tilting module admits only a finite number of complements turned out to be strongly connected with some well-known homological
conjectures for finite dimensional algebras. For instance, \cite{H95} demonstrates that if the Finitistic Dimension Conjecture is satisfied for an algebra $A$, then any maximal partial-tilting module in $\ma$ admits only a finite number of complements. At the same time, an algebra
$A$ satisfies the Generalized Nakayama Conjecture if and only if any projective
maximal partial-tilting module admits only a finite number of complements, see \cite{BS96,HU96}.
We refer to \cite[Section 13.6, 13.7]{S23} for more information on homological conjectures related to self-orthogonal modules.
In particular, the following complement conjectures are made:

{\bf (A)} (\cite{BS96,HU96}) Any maximal partial-tilting module admits a finite number of complements.

{\bf (B)} (Happel) If a maximal partial-tilting module M admits a finite number of
complements, then the number of complements is bounded by $2n$.

It is clear that conjecture ${\bf (A)}$ holds for any representation-finite algebra.
Mantese proved in \cite{M05} that the conjecture ${\bf (A)}$ holds for any projective maximal partial-tilting module over a monomial algebra.
The original conjecture {\bf(B)} made by Happel predicts that the number of complements is bounded by $2n-1$, and it is true for some special algebras, for example, left serial algebras \cite[Corollary 3.6]{M05}.
However, in \cite{M05}, Mantese constructed an algebra with rank two which has four complements, which suggests that
the supposed bound in conjecture ${\bf (B)}$ should be changed from $2n-1$ to $2n$, see details in \cite[Section 2]{M05}.

The main result of this paper says that for any gentle algebra, the answer of ${\bf (Cn-1)}$ is affirmative, and both the conjectures {\bf (A)} and {\bf (B)} are valid:
 
\begin{theorem}[Theorem \ref{thm:almost-tilting is partial-tilting}]
 Any almost-tilting module over a gentle algebra is partial-tilting, that is, it can be completed as a tilting module. Additionally, there are at most $2n$ complements.
 \end{theorem}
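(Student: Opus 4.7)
The plan is to work entirely inside the surface model $\surf$ attached to the gentle algebra $A$. Indecomposable summands of a pre-tilting module correspond to arcs on $\cals$, the self-orthogonality condition $\Ext^{\geq 1}(M,M)=0$ translates to a geometric non-crossing (or ``compatibility'') condition on arcs, and the finite projective dimension condition (T1) restricts us to arcs that avoid a certain subsurface/endpoint configuration dictated by $\zD^*$ (the forbidden threads). The first task is therefore to formulate a clean geometric characterization: an almost-tilting module of rank $n-1$ corresponds to a collection $\{\zg_1,\dots,\zg_{n-1}\}$ of $n-1$ pairwise compatible admissible arcs, while tilting modules correspond to maximal such collections of size $n$ (essentially, partial triangulations saturating a suitable region). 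Under this dictionary, the theorem becomes: any $(n-1)$-family of admissible compatible arcs extends to such an $n$-family, with at most $2n$ extensions.

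Next I would set up an induction on the rank $n$. Fix one summand $\zg_i$ of the almost-tilting module $M$ and \emph{cut} the surface $\cals$ along $\zg_i$. This produces a (possibly disconnected) marked surface $(\cals',\calm',\zD'^{\,*})$ whose associated gentle algebra $A'$ has strictly smaller rank, and the remaining arcs $\{\zg_j\mid j\neq i\}$ descend to a pre-tilting $A'$-module $M'$. After checking that $M'$ is again \emph{almost-tilting} for $A'$ (possibly by working on each connected component of $\cals'$ separately and matching ranks), the inductive hypothesis yields a complement $\zg'$ on $\cals'$. Gluing $\zg'$ back along the cut $\zg_i$ produces a complement to $M$ on $\cals$. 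For the counting, each complement to $M$ either (i) corresponds via the cut to a complement of $M'$, contributing at most $2(n-1)$ candidates by induction; or (ii) is an arc that crosses $\zg_i$ transversally and therefore has to be counted locally at the two endpoints of $\zg_i$, contributing a bounded number of extra candidates. Keeping careful track of these end-of-$\zg_i$ contributions is what should yield the sharp bound $2n$.

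The main obstacles I anticipate lie in three places. First, the cutting operation must be shown to preserve the gentle-algebra data: the new surface must again carry the structure $(\calm',\zD'^{\,*})$ of a gentle algebra, and the images of admissible arcs must remain admissible, which is where the role of $\zD^*$ (finite projective dimension / forbidden threads) becomes delicate. Second, axiom (T3) is genuinely global: even once we find a geometric candidate $\zg$ making $\{\zg_1,\dots,\zg_{n-1},\zg\}$ compatible and admissible, we must verify that $A_A$ admits a finite co-resolution in $\add(M\oplus N)$, where $N$ is the module corresponding to $\zg$. I expect this to be handled by a characterization that identifies the ``tiling property'' on the surface with (T3); this step is probably the technical heart of the paper. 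Third, obtaining exactly $2n$ rather than $2n-1$ requires careful bookkeeping at the two endpoints of $\zg_i$, matching the phenomenon behind Mantese's $n=2$ example with four complements, and presumably explains why the revised bound $2n$ is tight.
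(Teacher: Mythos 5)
Your proposal correctly identifies the paper's main tool — cutting the surface along arcs in the pre-tilting collection — but the induction you set up cannot work as stated because you have the direction of the rank change reversed. Cutting a marked surface along a simple zigzag $\bpoint$-arc $\zg$ \emph{increases} the rank by one, not decreases it: this is Proposition \ref{prop:rank}, since the Euler characteristic drops by one while the number of marked $\bpoint$-points rises by two. After cutting along $\zg_i$, the new gentle algebra $A'$ has rank $n+1$, the arc $\zg_i$ becomes \emph{two} arcs $\zg_{i,1},\zg_{i,2}$ on the cut surface, and the descended module has rank $n$ (not $n-2$); so you remain in the situation of an almost-tilting module over an algebra of one higher rank. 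An induction on $n$ in the direction you propose therefore never terminates, and the ``each complement either restricts to a complement on $\cals'$ or is counted locally at the two endpoints of $\zg_i$'' bookkeeping has no base case to land on.

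The paper instead performs a single global reduction: it cuts along \emph{all} $n-1$ arcs of the pre-tilting collection simultaneously (Lemma \ref{lem:key} and Corollary \ref{cor:key} ensure this can be done and that complements correspond bijectively), obtaining a surface of rank $2n-1$ with $2n-2$ induced arcs forming bigons with boundary segments. All but one sub-surface cut out by these arcs is a trivial polygon; the exceptional sub-surface has rank one and by Lemma \ref{lamma:rankonesur} is a disk, once-punctured disk, or annulus. Lemma \ref{lem:keykey} then classifies the possible completions in these three base cases directly, giving $1\le r\le \tilde n +1\le 2n$. This is where the bound $2n$ comes from; it is not an endpoint-by-endpoint accounting. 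Separately, you rightly flag (T3) as the hidden difficulty, but the paper dispatches it differently and more cleanly than you anticipate: Proposition \ref{prop:tilting dissection} shows a self-orthogonal module of full rank $n$ is automatically tilting, because its projective resolution is a pre-silting complex of full rank and hence silting by the criterion of \cite{APS23}; so (T3) reduces to counting arcs rather than to a global tiling condition. In short, the broad strategy (geometric dictionary + surface cuts) is right, but the specific inductive scheme fails because cutting enlarges rather than shrinks the rank, and the correct argument is a one-shot reduction to three rank-one base surfaces.
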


However, Theorem \ref{thm:pre-tilting not partial-tilting} demonstrates that for any $n\geq 3$ and $1\leq m \leq n-2$, there always exists a (connected) gentle algebra of rank $n$ and a pre-tilting module of rank $m$ that does not qualify partial-tilting.
We note that the complement problem for silting objects in the derived category of (potentially graded) gentle algebras has been explored in \cite{CJS22, JSW23, LZ23}. Building on the existence results for silting objects established in \cite{CJS22}, \cite{JSW23} presents a series of examples where there exists a pre-silting object that is not partial silting. Additionally, \cite{LZ23} provides a counter-example by offering an explicit classification of the pre-silting objects. On the other hand, it is proved in \cite{JSW23} that for a homologically smooth and proper graded gentle algebra, any almost complete pre-silting object is partial silting.

As mentioned above, this paper focuses on a special class of finite dimensional algebras, known as gentle algebras, which are classical objects in the representation theory of associative algebras. 
Gentle algebras were introduced in the 1980s  as a generalization of iterated tilted algebras of type $A_n$ \cite{AH81}, and affine type $\widetilde A_n$ \cite{AS87}. Since their inception, gentle algebras have been a constant object of study, and much is known about their representation. For example, the indecomposable objects and the Auslander-Reiten sequences in their module category are described in \cite{WW85,AS87,BR87}.

In recent years, topological and geometric methods have been widely applied in the study of the representation theory of finite dimensional associative algebras.
For a gentle algebra, geometric models for its module category and derived category are established using partial triangulations (or dissections) of the surfaces, as described in \cite{BC21} and  \cite{OPS18} respectively. 
These models represent arcs as indecomposable objects, intersections as morphisms, and rotations of arcs as Auslander-Reiten translations, allowing the combinatorics and topology of the surface to address problems in the representation theory of gentle algebras. This paper contributes an additional example to this growing area of applications. 

There is extensive literature on surface models for various categories associated with gentle algebras, including \cite{APS23,BC21,HKK17,LP20,OPS18,PPP19,PPP21}.
For our purpose, we will closely follow the constructions and the notations used in \cite{C23}, where the author refines the geometric model for the module category of a gentle algebra
given in \cite{BC21}, and then integrates it with the geometric model of the derived category
given in \cite{OPS18}, in the sense that each so-called zigzag curve on the surface represents
an indecomposable module as well as the minimal projective resolution of this module.
Notably, extensions of modules can thus be easily visualized on the marked surface, a feature we will frequently use in studying pre-tilting modules over gentle algebras.

The idea we use is to reduce the problem to several base cases: disks, once-punctured disks, and annulus. We construct a new marked surface and a new simple coordinate by cutting the surface along a simple zigzag $\bpoint$-arc. 
We mention that The technique of cutting surfaces has emerged as a powerful methodology for analyzing derived categories of gentle algebras. For example, it is proved that cutting surfaces give rise to silting reductions in \cite{CS23}, and recollements and semi-orthogonal decompositions in \cite{CJS22} and \cite{KS22} respectively. It is also used to reduce the action of the braid group on the full exceptional sequences in \cite{CS23b}, with \cite{CHS25} providing an concrete example of non-transitive action. 

In this paper, we explicitly construct the simple coordinate obtained by cutting the surface along any simple zigzag $\bpoint$-arc. 
This construction yields a new gentle algebra from the original one, which is used in \cite{C25} to explain maximal rigid modules over a gentle algebra as so-called admissible $5$-partial triangulations on the associated surface.

The paper is structured as follows: Section 1 lays out the foundational concepts, covering the module category over gentle algebras and its corresponding surface model. Section 2 centers on tilting modules over gentle algebras, beginning with a geometric interpretation and introducing a surface-cutting reduction method. The main result, proven in subsection 2.3, establishes that any almost-tilting module over a gentle algebra is partial-tilting, with at most $2n$ complements. Subsection 2.4 provides examples of pre-tilting modules that are not partial-tilting. The Appendix includes details on various cutting surfaces and the construction of algebras derived through surface cuts.

\section{Acknowledgments}
This paper is supported by  the NSF of China (Grant No. 12271321) and Fundamental Research Funds for the Central Universities (No. GK202403003).
The author would like to express gratitude to Xiaojin Zhang and Zhiwei Li for organizing the conference in May 2024. The two talks delivered by Bin Zhu and Yu Zhou during this conference directly inspired the initiation of this work.
He would also like to thank Steffen Koenig for insightful discussions on tilting completion. Additionally, he is grateful to Sibylle Schroll for inviting him to visit the University of Cologne from September to November 2024. Her financial support and hospitality created an ideal research environment, and portions of this manuscript were completed during this period.

\section{Preliminaries}\label{Preliminaries}

In this paper, a quiver will be denoted by $Q=(Q_0,Q_1)$, where $Q_0$ is the set of vertices and $Q_1$ is the set of arrows. The numbers of the vertices and the arrows of $Q$ is $|Q_0|$ and $|Q_1|$ respectively. For an arrow $\aaa$, $s(\aaa)$ is the source and $t(\aaa)$ is the target of it.
Arrows in a quiver are composed from left to right as follows: for arrows $\aaa$ and $\bbb$ we write $\aaa\bbb$ for the path from the source of $\aaa$ to the target of $\bbb$.
We adopt the convention that maps are also composed from left to right, that is if $f: X \to Y$ and $g: Y \to Z$ then $fg: X \to Z$. We denote by $\mathbb{Z}$ the set of integer numbers, and by $\mathbb{N}$ the set of positive integer numbers. 

An algebra $A$ will be assumed to be basic with finite dimension over a base field $k$ which is algebraically closed. In general, we consider right modules, where $\ma$ is the category of finite-dimensional modules over $A$.
For a module $M$, we denote by $\add M$ the subcategory of
$\ma$ consisting of direct summands of finite direct sums of
copies of $M$. For example $\add A$ is the category
of finitely generated projective $A$-modules, and
$\add DA$ is the category
of finitely generated injective $A$-modules.

We denote by $\cald^b(A)$ the (bounded) derived category of $\ma$. Since $\cald^b(A)$ is triangle equivalent to the homotopy category $K^{-,b}(\proj A)$ of complexes of projective $A$-modules bounded on the right and bounded in homology, we will not distinguish these two categories, and view the perfect derived category $K^b(\proj A)$ as a full subcategory of $\cald^b(A)$.

\subsection{The modules over gentle algebras}\label{subsection: gentle algebras}

In this subsection, we recall some basic definitions and constructions of the modules over gentle algebras.

\begin{definition}\label{definition:gentle algebras}
We call an algebra $A=kQ/I$ a \emph{gentle algebra}, if $Q=(Q_0,Q_1)$ is a finite quiver and $I$ is an admissible ideal of $kQ$ satisfying the following conditions:
\begin{enumerate}[\rm(1)]
 \item Each vertex in $Q_0$ is the source of at most two arrows and the target of at most two arrows.

 \item For each arrow $\aaa$ in $Q_1$, there is at most one arrow $\bbb$ such that  ${\bf{0}} \neq \aaa\bbb\in I$; at most one arrow $\ccc$ such that  ${\bf{0}} \neq \ccc\aaa\in I$; at most one arrow $\bbb'$ such that $\aaa\bbb'\notin I$; at most one arrow $\ccc'$ such that $\ccc'\aaa\notin I$.

 \item $I$ is generated by paths of length two.
\end{enumerate}
\end{definition}

It is well-known that any indecomposable module in $\ma$ is either a \emph{string module} or a \emph{band module}, which is parameterized by string and band combinatorics respectively \cite{BR87}.
The maps between the indecomposable modules are characterized in \cite{C89}.
Since we are interested in the tilting theory of gentle algebras, and any band module has non-trivial self-extensions, the band module plays no role in our consideration. 
So in the following, we only recall the construction of string modules. 
We refer the reader to \cite{BR87} for more details. 

For an arrow $\aaa$, let $\aaa^{-1}$ be its \emph{formal inverse} with $s(\aaa^{-1})=t(\aaa)$ and $t(\aaa^{-1})=s(\aaa)$. A \emph{walk} is a (possibly infinite) sequence $\cdots\omega_1\omega_2\cdots\omega_m\cdots$ of arrows and inverse arrows in $Q$ such that $t(\omega_i)=s(\omega_{i+1})$ and $\omega_{i+1}\neq\omega^{-1}_i$ for each $i$.
A walk $\omega$ is called a \emph{finite walk}, if it consists of finite number of arrows and formal inverses.

A \emph{string} is a finite walk $\omega$ that avoids relations, that is, there is no subsequence of $\omega$ or of $\omega^{-1}$ belongs to $I$.
A \emph{direct string} is a string consisting of arrows, and an \emph{inverse string} is a string consisting of formal inverses.
For each vertex $v$, we associate a \emph{trivial string} $1_v$ to it.
Each string $\omega$ defines a \emph{string module} $M(\omega)$, which is given by the quiver representation of type $A$ obtained by replacing every vertex in $\omega$ by a copy of $k$ and every arrow by the identity map.
This gives a bijection between the inversion equivalent classes of strings and the isomorphism classes of string modules. In particular, $M(1_v)$ is the simple module arising from the vertex $v$ of $Q$.

\subsection{Marked surfaces}\label{subsection: geo-module categories}

We recall some concepts about marked surfaces.

\begin{definition}
	\label{definition:marked surface}
	A \emph{marked surface} is a pair $(\cals,\calm)$, where
	\begin{enumerate}[\rm(1)]
		\item $\cals$ is an oriented surface with non-empty boundary with
		connected components $\partial \cals=\sqcup_{i=1}^{b}\partial_i \cals$;
		\item $\calm = \calm_{\bpoint} \cup \calm_{\rpoint} \cup \calp_{\bpoint}$ is a finite set of \emph{marked points} on $\cals$. The elements of~$\calm_{\bpoint}$ and~$\calm_{\rpoint}$ are on the boundary of $\cals$, which will be respectively represented by symbols~$\bpoint$ and~$\rpoint$. Each connected component $\partial_i \cals$ is required to contain at least one marked point of each colour, where the points~$\bpoint$ and~$\rpoint$ are alternating on $\partial_i \cals$. The elements in $\calp_{\bpoint}$ are in the interior of $\cals$. We refer to these points as \emph{punctures}, and we will also represent them by the symbol $\bpoint$.
	\end{enumerate}
\end{definition}
Let $(\cals,\calm)$ be a marked surface.
	\begin{enumerate}[\rm(1)]
		\item An \emph{arc} is a non-contractible curve, with endpoints in~$\calm_{\bpoint}\cup \calp_{\bpoint}\cup\calm_{\rpoint}$. It is an $\rpoint$-\emph{arc} if the endpoints are from $\calm_{\rpoint}$, and it is an $\bpoint$-\emph{arc} if the endpoints are from $\calm_{\bpoint}\cup \calp_{\bpoint}$.
		\item A \emph{loop} is an arc whose endpoints coincide.

		\item A \emph{simple arc} is an arc without interior self-intersections.
	\end{enumerate}

In order for some definitions and notations to be well-defined in the case of a loop, we will treat the unique endpoint of a loop as two distinct endpoints.
On the surface, all curves are considered up to homotopy with respect to the boundary components and the punctures, and all intersections of curves are required to be transversal.

\subsection{Coordinates and dissections}\label{subsection:coord-diss}
To realize the gentle algebras and their module categories on marked surfaces, we introduce the following.
\begin{definition}\label{definition:addmissable dissections}
	\begin{enumerate}[\rm(1)]
		\item  A collection of simple arcs with endpoints in $\calm_{\rpoint}$ is called a \emph{simple coordinate}, if the arcs have no interior intersections and they cut the surface into polygons each of which contains exactly one marked point from $\calm_{\bpoint}\cup \calp_{\bpoint}$. 
		\item A collection of simple arcs with endpoints in $\calm_{\bpoint}$ is called an \emph{admissible dissection}, if the arcs have no interior intersections and they cut the surface into polygons each of which contains exactly one marked point from $\calm_{\rpoint}\cup \calp_{\bpoint}$.
	\end{enumerate}
\end{definition}

We will denote a simple coordinate by $\zD^*$ and an admissible dissection by $\zD$.
More generally, a collection $\zG$ of simple $\bpoint$-arcs with endpoints in $\calm_\bpoint$ is called an \emph{admissible collection}, if the arcs have no interior intersections and they cut the surface into sub-surfaces each of which contains \emph{at least} one marked point from $\calm_{\rpoint}\cup \calp_{\bpoint}$. We mention that the arc systems appearing above are named in different ways in different papers, for example, an admissible dissection is called a full formal arc system in \cite{HKK17}, a lamination in \cite{OPS18}, and a projective dissection in \cite{C23}. Since the paper \cite{C23} deals with both the module category and the derived category, and the `simple-projective duality' plays an important role, so the projective dissection is used there, see \cite[Remark 1.5]{C23} for more details. However, in this paper, we only focus on the module category and the tilting modules, and we will introduce tilting dissection in the following, therefore `projective dissection' is misleading. Thus we use `admissible dissection', as the one used in \cite{APS23}.

We assume that $\cals$ has $c$ connected components. 
Denote by $g$ the genus of $\cals$ and by $b$ the number of connected components of $\partial \cals$. Then the Euler character of $\cals$ is $\chi=2c-2g-b-|\calp_\bpoint|$, where a puncture can be viewed as a boundary component without marked points. It is shown in \cite{APS23}, see also \cite{PPP21}, that there are exactly $|\calm_{\bpoint}|-\chi|$ arcs in a simple coordinate as well as in an admissible dissection. We call $|\calm_{\bpoint}|-\chi|$ the \emph{rank} of $(\cals,\calm)$, and denote it by $n$.
Furthermore, an admissible collection is an admissible dissection if and only if it has $n$ arcs.

Since the boundary of a marked surface is non-empty and there is at least one $\bpoint$-point on each boundary component, therefore $|\calm_{\bpoint}|\geq b\geq 1$, and we have the following 
\begin{lemma}\label{lamma:rankonesur}
Let $(\cals,\calm)$ be a marked surface with rank one, then it must be one of the following cases: a disk with two marked $\bpoint$-points on the boundary, a once-punctured disk with one marked $\bpoint$-point on the boundary.
\end{lemma}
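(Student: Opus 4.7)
The plan is a short case analysis from the rank formula combined with the defining constraints of a marked surface.

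First, I would plug the expression $\chi = 2c - 2g - b - |\calp_\bpoint|$ into $n = |\calm_\bpoint| - \chi$ and set $n = 1$, obtaining the Diophantine identity
\[|\calm_\bpoint| + 2g + b + |\calp_\bpoint| = 1 + 2c.\]
I would then reduce to the connected case $c = 1$: any connected component of $\cals$ contributes non-negatively to the total number of arcs in an admissible dissection, so a rank-one marked surface is, up to rank-zero (trivial) components, connected. With $c = 1$ the identity becomes $|\calm_\bpoint| + 2g + b + |\calp_\bpoint| = 3$.

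Next, I would exploit the boundary/alternation constraint built into Definition \ref{definition:marked surface}, which gives $|\calm_\bpoint| \geq b \geq 1$. This rules out $g \geq 1$ at once, since then $|\calm_\bpoint| + b \leq 1$ would contradict $|\calm_\bpoint| \geq b \geq 1$. Hence $g = 0$ and $|\calm_\bpoint| + b + |\calp_\bpoint| = 3$. Similarly, if $b \geq 2$, then $|\calm_\bpoint| + b \geq 2b \geq 4$, again impossible. So $b = 1$, and the remaining constraint is $|\calm_\bpoint| + |\calp_\bpoint| = 2$ with $|\calm_\bpoint| \geq 1$.

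The two integer solutions $(|\calm_\bpoint|, |\calp_\bpoint|) = (2, 0)$ and $(1, 1)$ correspond exactly to the disk with two boundary $\bpoint$-points and to the once-punctured disk with a single boundary $\bpoint$-point, which is the desired conclusion. I do not anticipate any serious obstacle; the argument is essentially numerical exhaustion, and the only points to articulate carefully are the reduction to the connected case and the correct use of the inequality $|\calm_\bpoint| \geq b$ coming from the alternation condition on the boundary.
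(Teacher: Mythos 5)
Your proof is correct and is essentially the argument the paper tacitly invokes: after discussing the rank formula $n=|\calm_\bpoint|-\chi$ the paper only records the inequality $|\calm_\bpoint|\geq b\geq 1$ and then states the lemma without a written-out proof, leaving precisely this numerical exhaustion to the reader. The one loose end you rightly flag, the reduction to $c=1$, reflects that the lemma as phrased implicitly assumes connectedness (or excludes trivial rank-zero disk components), but this is harmless in the application in Lemma \ref{lem:keykey}, where the sub-surface in question is automatically connected.
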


\subsection{Gentle algebras from simple coordinates}\label{subsection: gentle algebras}

We recall how to construct a gentle algebra from a simple coordinate $\zD^*=\{\ell_i^*, 1\leq i \leq n\}$ on $(\cals,\calm)$.

\begin{definition}\label{definition:oriented intersection}
	Let $q$ be a common endpoint of arcs $\ell_i^*, \ell_j^*$ in $\zD^*$, an \emph{oriented intersection} from $\ell_i^*$ to $\ell_j^*$ at $q$ is an anti-clockwise angle locally from $\ell_i^*$ to $\ell_j^*$ based at $q$ such that the angle is in the interior of the surface. An oriented intersection is \emph{minimal} if it is not a composition of two oriented intersections of arcs from $\zD^*$.
\end{definition}

\begin{definition}\label{definition:gentle algebra from dissection}
	We define the algebra $A(\zD^*)$ as the quotient of the path algebra $kQ(\zD^*)$ of the quiver $Q(\zD^*)$ by the ideal $I(\zD^*)$ defined as follows:
	\begin{enumerate}[\rm(1)]
		\item The vertices of $Q(\zD^*)$ are given by the arcs in $\zD^*$.
		\item Each minimal oriented intersection $\aaa$ from $\ell_i^*$ to $\ell_j^*$ gives rise to an arrow from $\ell_i^*$ to $\ell_j^*$, which is still denoted by $\aaa$.
		\item The ideal $I(\zD^*)$ is generated by paths $\aaa\bbb:\ell_i^*\rightarrow \ell_j^*\rightarrow \ell_k^*$, where the common endpoint of $\ell_i^*$ and $\ell_j^*$, and the common endpoint of $\ell_j^*$ and $\ell_k^*$ that respectively gives rise to $\aaa$ and $\bbb$ are different.
	\end{enumerate}
\end{definition}

Then it is not hard to check that $A(\zD^*)$ is a gentle algebra. 
Conversely, it is proved in \cite{BC21,OPS18,PPP19} that any gentle algebra arises from this way. So this establishes a bijection between the set of triples  $(\cals,\calm,\zD^*)$ of homeomorphism classes of marked surfaces together with simple coordinates and the set of isomorphism classes of gentle algebras $A(\zD^*)$.

\begin{example}\label{ex:gen-alg}
See Figure \ref{fig:alg-ex} for an example of a marked surface with a simple coordinate and the associated gentle algebra.
 \begin{figure}[ht]
		\begin{center}
 		\begin{tikzpicture}[>=stealth,scale=0.6]
				\draw[line width=1.8pt,fill=white] (0,0) circle (4cm);
				\draw[thick,fill=gray!50] (0,0) circle (0.8cm);
				\path (0:4) coordinate (b1)
				(70:4) coordinate (b2)
				(110:4) coordinate (b3)
				(135:4) coordinate (b4)
				(200:4) coordinate (b5)
				(90:4) coordinate (b6)
				(-90:4) coordinate (b7)
				(-70:4) coordinate (b8)
				(-50:4) coordinate (b9)
				(45:4) coordinate (b10);
				
				\draw[red,line width=1.3pt]plot [smooth,tension=0.6] coordinates {(0,-4) (-1.7,0) (-0.8,1.2) (0,0.8)};
				\draw[red,line width=1.3pt]plot [smooth,tension=0.6] coordinates {(0,-4) (1.7,0) (.8,1.2) (0,0.8)};	
				\draw [red, line width=1.5pt] (b6) to (0,0.8);
				\draw[cyan,line width=1.5pt,red] (b6) to[out=-120,in=-10](b4);
				\draw[cyan,line width=1.5pt,red] (b6) to[out=-60,in=-170](b10);
				\draw[cyan,line width=1.5pt,red] (b7) to[out=50,in=170](b9);
				\draw[red] (-1.5,2.5) node {$1$};
				\draw[red] (-.3,2.5) node {$2$};
				\draw[red] (1.5,2.5) node {$3$};
				\draw[red] (1.5,-1.8) node {$4$};
				\draw[red] (-1.5,-1.8) node {$5$};
				\draw[red] (1.7,-2.7) node {$6$};
	\draw (7,0) node {};
	\draw[thick,fill=black] (b1) circle (0.08cm)
				(b2) circle (0.08cm)
				(b3) circle (0.08cm)
				(b5) circle (0.08cm)
				(b8) circle (0.08cm);	
	\draw[thick,red, fill=red] (b4) circle (0.08cm)
				(b6) circle (0.08cm)
				(b7) circle (0.08cm)
				(b9) circle (0.08cm)
				(b10) circle (0.08cm);
				
				\draw[thick, red ,fill=red] (0,0.8) circle (0.08cm);
				\draw[thick, black ,fill=black] (0,-0.8) circle (0.08cm);
			\end{tikzpicture}
{\begin{tikzpicture}[ar/.style={->,very thick,>=stealth}]
				\draw(-5,3)node(6){$6$}
				(-4,1.5)node(4){$4$}
				(-2,1.5)node(5){$5$}
				(-3,0)node(2){$2$}
				(-4,-1.5)node(3){$3$}
				(-2,-1.5)node(1){$1$};

				\draw[ar](6) to  (4);
				\draw[ar](4) to (5);
				\draw[ar](4) to  (2);
				\draw[ar](2)to  (5);
				\draw[ar](2) to  (3);
				\draw[ar](1) to  (2);
				\draw[ar](2)to (3);
	
   \draw[bend right,dotted,thick]($(6)!.6!(4)-(0,0.1)$) to ($(4)!.25!(2)-(0,0.1)$);
				\draw[bend right,dotted,thick]($(4)!.55!(2)-(0,0.1)$) to ($(2)!.3!(3)-(0,0.05)$);
				\draw[bend left,dotted,thick]($(5)!.55!(2)-(0,0.1)$) to ($(2)!.3!(1)-(0,0.05)$);
				
		\end{tikzpicture}}
		\end{center}	
  \caption{The left picture is a marked surface with a simple coordinate $\zD^*$, where the arcs in $\zD^*$ are the $\rpoint$-arcs. The right picture shows the gentle algebra associated with it, where the dotted lines represent the (quadratic) relations in the algebra.} 
	\label{fig:alg-ex}
	\end{figure}
\end{example}

\subsection{Zigzag arcs and string modules}\label{subsection: sur-module}
Now we recall the geometric model of the module category of a gentle algebra, see \cite{BC21,C23} for details.
Let's start with constructing a string module from an $\bpoint$-arc. 
In the following, let $(\cals,\calm,\zD^*)$ be a marked surface with a simple coordinate, and let $A$ be the associated gentle algebra.

Note that $\zD^*$ cut the surface into polygons $\bbp$ each of which has exactly one marked point from $\calm_{\bpoint}$ or from $\calp_{\bpoint}$. These polygons will be called the \emph{polygons of $\zD^*$}.
We denote a polygon $\bbp$ of $\zD^*$ by $(\ell^*_{i_1},\cdots,\ell^*_{i_m})$, the ordered set of arcs in $\zD^*$ which form $\bbp$, where the arcs are ordered clockwise and where the index $1\leq j \leq m$ is considered modulo $m$ if $\bbp$ contains a puncture. For any $\ell^*_{i_j}\in \bbp$, we call $\ell^*_{i_{j-1}}$ (if exists) the \emph{predecessor} of $\ell^*_{i_{j}}$ in $\bbp$ and call $\ell^*_{i_{j+1}}$ (if exists) the \emph{successor} of $\ell^*_{i_{j}}$ in $\bbp$, see Figure \ref{figure:def-zigzag}.
In particular, $\ell^*_{i_{1}}$ has no predecessor and $\ell^*_{i_{m}}$ has no successor if $\bbp$ has a marked point from $\calm_{\bpoint}$, while $\ell^*_{i_{1}}$ has the predecessor $\ell^*_{i_{m}}$ and $\ell^*_{i_{m}}$ has the successor $\ell^*_{i_{1}}$ if $\bbp$ has a puncture.

{\bf Setting}: {\it Let $\za$ be an $\bpoint$-arc on the marked surface $(\cals,\calm,\zD^*)$. After choosing a direction of $\za$, denote by $\bbp_0, \bbp_1,\cdots, \bbp_{n+1}$ the ordered polygons of $\zD^*$ that successively intersect with $\za$. Denote by $\ell^*_0, \ell^*_1,\cdots, \ell^*_{n}$ the ordered arcs in $\zD^*$ that successively intersect with $\za$ such that $\ell^*_i$ belongs to $\bbp_{i}$ and $\bbp_{i+1}$ for each $0 \leq i \leq n$.}

\begin{definition}\label{definition:zigzag arcs}
Let $\za$ be an $\bpoint$-arc on $(\cals,\calm,\zD^*)$. Under the notations in {\bf Setting}, we call $\za$ a \emph{zigzag arc} (with respect to $\zD^*$) if in each polygon $\bbp_{i+1}, 0 \leq i \leq n+1$, $\ell^*_{i+1}$ is the predecessor or the successor of $\ell^*_{i}$. Furthermore, if $\bbp_{i+1}$ is a polygon which contains a puncture and has $m$ edges with $m\neq 2$, then we also need the puncture is not in the (unique) triangle formed by the segments of $\ell^*_{i}$, $\ell^*_{i+1}$ and $\za$, see the right picture of Figure \ref{figure:def-zigzag}.
	\begin{figure}
			\begin{tikzpicture}[>=stealth,scale=0.8]
				\draw[red,very thick] (0,0)--(-1,2)--(2,3)--(5,2)--(4,0);
\draw [ gray!60, line width=4pt] (5,-.07)--(-1,-.07);	
\draw[line width=2pt](5,0)--(-1,0);\draw[red,thick,fill=red] (0,0) circle (0.1);
				\draw[red,thick,fill=red] (-1,2) circle (0.1);
				\draw[red,thick,fill=red] (2,3) circle (0.1);
				\draw[red,thick,fill=red] (5,2) circle (0.1);
				\draw[red,thick,fill=red] (4,0) circle (0.1);
				
				\draw[thick,fill=white] (2,0) circle (0.1);
				\node at (.8,.5) {$\bbp_{i+1}$};
				\node at (.8,1.5) {\tiny$\za$};
				\node[red] at (3.9,2.7) {\tiny$\ell^*_{i_{j+1}}$};
				\node[red] at (0,2.7) {\tiny$\ell^*_{{i_j}}$};
				\node[red] at (-.9,.7) {\tiny$\ell^*_{{i_1}}$};	
    			\node[red] at (4.9,.7) {\tiny$\ell^*_{{i_m}}$};	
				\draw[bend right,thick](-1,2.5)to(5,2.5);
				\draw[bend right,thick,->](1.4,2.8)to(2.6,2.8);
				\node at (2,2.3) {\tiny$\aaa_{i+1}$};
				\node at (7,1.5) {};
			\end{tikzpicture}	
			\begin{tikzpicture}[>=stealth,scale=0.8]
				\draw[red,very thick] (0,0)--(-1,2)--(2,3)--(5,2)--(4,0);
				\draw[red,very thick](4,0)--(0,0);
				\draw[red,thick,fill=red] (0,0) circle (0.1);
				\draw[red,thick,fill=red] (-1,2) circle (0.1);
				\draw[red,thick,fill=red] (2,3) circle (0.1);
				\draw[red,thick,fill=red] (5,2) circle (0.1);
				\draw[red,thick,fill=red] (4,0) circle (0.1);
				
				\draw[thick,fill=white] (2,1) circle (0.1);
				\node at (.8,.5) {$\bbp_{i+1}$};
				\node at (.8,1.5) {\tiny$\za$};
			\node[red] at (3.9,2.7) {\tiny$\ell^*_{i_{j+1}}$};
				\node[red] at (0,2.7) {\tiny$\ell^*_{{i_j}}$};
				\node[red] at (-.9,.7) {\tiny$\ell^*_{{i_1}}$};	
    			\node[red] at (5,.7) {\tiny$\ell^*_{{i_{m-1}}}$};	
    			\node[red] at (2,.3) {\tiny$\ell^*_{{i_m}}$};	
				\draw[bend right,thick](-1,2.5)to(5,2.5);
				\draw[bend right,thick,->](1.4,2.8)to(2.6,2.8);
				\node at (2,2.3) {\tiny$\aaa_{i+1}$};
				
		\end{tikzpicture}
		\begin{center}
			\caption{Two types of polygons formed by arcs in a simple coordinate and boundary segments, where a zigzag arc $\za$ passes through the polygon along an oriented intersection $\aaa_{i+1}$.}\label{figure:def-zigzag}
		\end{center}
	\end{figure}
\end{definition}

{\bf Construction. String of a zigzag arc.}

Let $\za$ be a zigzag arc on $(\cals,\calm,\zD^*)$ with notation in {\bf Setting}, we associate a string $\omega(\za)$ of $A$ to it in the following way.
For each $0\leq i \leq n-1$, there is an arrow in $A$ from $\ell_i^*$ to $\ell_{i+1}^*$ arising from an oriented intersection of $\bbp_{i+1}$, which is a minimal oriented intersection and we denote it by $\aaa_{i+1}$, see the pictures in Figure \ref{figure:def-zigzag}.
We associate a walk $\omega(\za)=\omega_1\cdots\omega_n$ to $\za$, where $\omega_{i+1}=\aaa_{i+1}$ if $\za$ enter $\bbp_{i+1}$ through $\ell_i^*$ and leave through $\ell_{i+1}^*$, or $\omega_{i+1}=\aaa^{-1}_{i+1}$ if $\za$ enter $\bbp_{i+1}$ through $\ell_{i+1}^*$ and leave through $\ell_i^*$. Since $\za$ is zigzag, it is straightforward to see that $\omega(\za)$ is a string of $A$.

\begin{definition}\label{prop-def:string and band from curves}
	Let $(\cals,\calm,\zD^*)$ be a marked surface with a simple coordinate.
For a zigzag $\bpoint$-arc $\za$ on the surface, we call the string module $M_\za$ of $A$ arising from $\omega(\za)$ the \emph{string module of $\za$}.
\end{definition}


\begin{proposition}\label{theorem:main arcs and objects}
Let $(\cals,\calm,\zD^*)$ be a marked surface with a simple coordinate.
The map $M: \za\mapsto M_\za$ gives a bijection between zigzag arcs on $(\cals,\calm,\zD^*)$ and indecomposable string modules over $A$. 
\end{proposition}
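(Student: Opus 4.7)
The plan is to establish the three properties in turn: that $M$ is well-defined, i.e.\ $\omega(\za)$ really is a string of $A$; that $M$ is injective on homotopy classes (modulo the inversion $\za \leftrightarrow \za^{-1}$ built into the equivalence on strings); and that $M$ is surjective onto the indecomposable string modules. Combined with the classification in \cite{BR87} of indecomposables of $\ma$ as strings and bands, and the observation that band modules come from closed curves rather than arcs, this yields the claimed bijection.

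For well-definedness, I would verify that $\omega(\za) = \omega_1 \cdots \omega_n$ satisfies the two string axioms: $\omega_{i+1} \neq \omega_i^{-1}$, and no sub-walk $\omega_i \omega_{i+1}$ lies in $I(\zD^*)$. The first is automatic because $\za$ crosses a fresh arc $\ell_{i+1}^* \neq \ell_{i-1}^*$ at each step. For the second, a relation at the vertex $\ell_i^*$ would require the minimal oriented intersections $\aaa_i$ (lying in $\bbp_i$) and $\aaa_{i+1}$ (lying in $\bbp_{i+1}$) to sit at different endpoints of $\ell_i^*$, while the zigzag condition together with the fact that $\za$ enters $\bbp_{i+1}$ from $\bbp_i$ through $\ell_i^*$ forces both intersections to sit at the common endpoint of $\ell_{i-1}^*, \ell_i^*, \ell_{i+1}^*$ around which $\za$ winds. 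The puncture clause in Definition \ref{definition:zigzag arcs} is precisely the adjustment ensuring this remains true when $\bbp_{i+1}$ contains a puncture, where the cyclic ordering would otherwise admit a spurious ``shortcut'' around the puncture that produces a relation.

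For injectivity, if $\za$ and $\zb$ yield the same string with matching orientations, then they cross the same ordered sequence of arcs of $\zD^*$ at the same shared endpoints, so their successive segments lie in the same polygons and their endpoints are the unique $\bpoint$-points of $\bbp_0$ and $\bbp_{n+1}$. Since each polygon of $\zD^*$ is a disk containing a single $\bpoint$-point, and a zigzag arc is determined up to homotopy inside a polygon by its entry/exit edges together with the endpoint it turns around, $\za$ and $\zb$ are homotopic. For surjectivity, given a string $\omega = \omega_1 \cdots \omega_n$, I would build $\za$ polygon-by-polygon: start at the $\bpoint$-point of a polygon $\bbp_0$ containing the source arc, then at each step read $\omega_{i+1}$ as the instruction to cross into the adjacent polygon through the endpoint prescribed by $\omega_{i+1}$, and terminate at the unique $\bpoint$-point of $\bbp_{n+1}$. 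The non-relation condition translates exactly into the predecessor/successor alternation required by Definition \ref{definition:zigzag arcs}, so the resulting curve is zigzag and $\omega(\za) = \omega$ by construction.

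I expect the main obstacle to be the bookkeeping around two boundary cases. First, polygons containing punctures: their cyclic (rather than linear) ordering forces the extra hypothesis in Definition \ref{definition:zigzag arcs}, and one must verify both directions of the correspondence respect this. Second, zigzag arcs that are loops, where the single geometric endpoint has to be treated as two distinct formal endpoints for $\bbp_0$ and $\bbp_{n+1}$ to be unambiguously defined, and where one must rule out that a genuine loop produces a band rather than a string. Once these are handled, the remaining work is a local check in a single polygon comparing the predecessor/successor relation with the minimal oriented intersections of Definition \ref{definition:oriented intersection} and the generators of $I(\zD^*)$ of Definition \ref{definition:gentle algebra from dissection}.
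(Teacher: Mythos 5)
The paper does not actually prove this proposition: it is recalled from \cite{BC21} and \cite{C23} (see the opening of the subsection, ``Now we recall the geometric model\dots, see \cite{BC21,C23} for details''), so there is no in-paper proof to match your argument against. Your three-step outline (well-definedness of $\omega(\za)$ as a string, injectivity up to inversion, surjectivity via polygon-by-polygon construction) is the natural route, and you correctly flag the two delicate boundary cases, punctures and loops, that any complete argument must handle.

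One caution on the well-definedness step: the assertion that the minimal oriented intersections $\aaa_i\in\bbp_i$ and $\aaa_{i+1}\in\bbp_{i+1}$ automatically ``sit at the common endpoint of $\ell_{i-1}^*,\ell_i^*,\ell_{i+1}^*$'' is the \emph{conclusion} you want, not something forced by the zigzag condition alone. The zigzag condition only guarantees adjacency of the relevant edges inside each polygon; that the two minimal intersections happen to be based at the \emph{same} endpoint of $\ell_i^*$ (so that $\aaa_i\aaa_{i+1}\notin I(\zD^*)$) is an orientation statement: when $\omega_i$ and $\omega_{i+1}$ are both direct (resp.\ both inverse), the polygons $\bbp_i$ and $\bbp_{i+1}$ induce opposite boundary orientations on the shared edge $\ell_i^*$, which pins $\aaa_i$ and $\aaa_{i+1}$ to the same vertex; when the two letters have opposite types there is no relation to avoid. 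Your phrase ``around which $\za$ winds'' gestures at this, but a careful write-up should make the role of the surface orientation and of the letter directions explicit, since that is exactly where the compatibility between Definition~\ref{definition:gentle algebra from dissection}(3) and the zigzag condition is used. The same orientation bookkeeping, applied in reverse, is what makes the surjectivity construction land on a zigzag arc (rather than an arc that passes two corners of a polygon simultaneously), so it is worth isolating it as a lemma rather than repeating it informally in both directions.

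A smaller point: for injectivity in the case of an endpoint at a puncture, ``determined up to homotopy inside a polygon by its entry/exit edges'' needs a word of justification, because a once-punctured polygon is an annulus and arcs to the puncture a priori carry a winding datum; here the zigzag condition (and the convention that arcs are considered up to homotopy fixing punctures) is exactly what rules out extra winding, and this should be spelled out since the proposition explicitly includes such arcs (cf.\ Remark~\ref{rem:punctureext}).
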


\subsection{Intersections as morphisms and extensions}\label{subsection: sur-ext}

Let's interpret the intersections of zigzag arcs as the morphisms and extensions between the associated modules, see \cite{C23}. 
For convenience, we view any $\bpoint$-point as a zero zigzag arc, and then the module associated to it is just the zero module.

\begin{definition}\label{definition: weight}
	Let $\za$ be a zigzag arc with an endpoint $p$ in $\calm_{\bpoint}$, where $p$ belongs to a polygon $\bbp=\{\ell^*_1,\ell^*_2,\cdots,\ell^*_n\}$ of $\zD^*$ with the arcs labeled clockwise, see Figure \ref{figure:weight and co-weight}.
	Assume that starting from $p$, $\ell^*_t$ is the first arc that $\za$ intersects, for some $1\leq t \leq n$. We call $n-t$ the \emph{weight} of $\za$ at $p$, which is denoted by $w_p(\za)$.
	\begin{figure}
		\begin{center}
			\begin{tikzpicture}[>=stealth,scale=.8]
				\draw[red,very thick] (1,0)--(3,1)--(3,3)--(1,4);
				\draw[red,very thick,dashed] (-1,4)--(1,4);
				\draw[red,very thick] (-1,0)--(-3,1)--(-3,3)--(-1,4);
				
				\draw[line width=1pt] (-2.5,4)--(0,0)--(2.5,4);
				\draw [ gray!60, line width=4pt] (-2,-.07)--(2,-.07);		
					\draw[line width=2pt] (-2,0)--(2,0);			\draw[thick,fill=white] (0,0) circle (0.06);
				\draw (0,-.5) node {$p$};
				\node[red!50] at (-1.5,3.5) {\tiny$\ell^*_{t}$};
				\node[red!50] at (1.5,3.5) {\tiny$\ell^*_{t+\omega}$};
				\node at (-1.5,2) {$\za$};
				\node at (1.5,2) {$\zb$};
				\node[red!50] at (-2.3,1) {\tiny$\ell^*_{1}$};
				\node[red!50] at (2.3,1) {\tiny$\ell^*_{n}$};
				\node at (0,3) {$\bbp$};
				
				\draw[thick,bend left,->](-.2,.3)to(.2,.3);
				\node [] at (0,.6) {$\mathfrak{p}$};	
				\draw[red,thick,fill=red] (1,0) circle (0.1);
				\draw[red,thick,fill=red] (-1,0) circle (0.1);
				\draw[red,thick,fill=red] (3,1) circle (0.1);
				\draw[red,thick,fill=red] (3,3) circle (0.1);
				\draw[red,thick,fill=red] (1,4) circle (0.1);
				\draw[red,thick,fill=red] (1,0) circle (0.1);
				\draw[red,thick,fill=red] (-3,1) circle (0.1);
				\draw[red,thick,fill=red] (-3,3) circle (0.1);
				\draw[red,thick,fill=red] (-1,4) circle (0.1);
			\end{tikzpicture}
		\end{center}
		\begin{center}
			\caption{For a zigzag arc $\za$ with endpoint $p$ which intersects $\ell^*_t$, the weight  $w_p(\za)$ of $\za$ at $p$ equals $n-t$. The weight $\omega(\mathfrak{p})$ of an oriented intersection $\mathfrak{p}$ from $\za$ to $\zb$ is defined as $w_p(\za)-w_p(\zb)$, which equals $\omega$.}\label{figure:weight and co-weight}
		\end{center}
	\end{figure}
\end{definition}
The following proposition says that the weights of the associated arc give the projective dimension of any string module, see Corollary 2.28 in \cite{C23}.
\begin{proposition}\label{prop:proj-res-string}
	Let $\za$ be a zigzag arc, then
	\begin{enumerate}[\rm(1)]
		\item $\pd M_\za=max\{w_{p_1}(\za),w_{p_2}(\za)\}$, if both the endpoints $p_1$ and $p_2$ of $\za$ belong to $\calm_{\bpoint}$;
		\item $\pd M_\za=\infty$, if at least one endpoint of $\za$ belongs to $\calp_{\bpoint}$.
	\end{enumerate}
\end{proposition}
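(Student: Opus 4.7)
The plan is to prove both statements by induction on the quantity $w := \max\{w_{p_1}(\za), w_{p_2}(\za)\}$ (treated as $\infty$ when either endpoint is a puncture), using the well-known description of the minimal projective resolution of a string module over a gentle algebra together with the geometric dictionary already set up in Subsections \ref{subsection: gentle algebras}--\ref{subsection: sur-ext}.

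For the base case of (1), with $w=0$, the zigzag arc $\za$ enters each of its endpoint polygons through the arc $\ell^*_n$, i.e.\ the last arc clockwise from $p_i$. Reading off the string $\omega(\za)$ at both endpoints, I would check directly that $\omega(\za)$ is the unique maximal string at some vertex $\ell^*\in\zD^*$ consisting of a direct substring followed by an inverse substring, which is precisely the string of the indecomposable projective module $P_{\ell^*}$. Hence $M_\za$ is projective and $\pd M_\za=0$. (This also serves as the geometric characterization of a ``projective arc'' that I will need for the inductive step.)

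For the inductive step of (1), I would invoke the standard description of the projective cover of a string module: $P(M(\omega))=\bigoplus_{v\text{ peak}} P_v$, and the syzygy $\Omega M(\omega)$ is a direct sum of string modules obtained by removing at each peak the two incident maximal direct paths. Translating this into the surface model, each endpoint $p_i$ with $w_{p_i}(\za)\geq 1$ contributes exactly one syzygy summand, corresponding to the zigzag arc $\za_i'$ obtained by a boundary rotation at $p_i$: push $p_i$ to the next $\bpoint$-point clockwise along the boundary of the endpoint polygon $\bbp_0$, so that $\za_i'$ now first crosses $\ell^*_{t+1}$ instead of $\ell^*_t$, while the rest of $\za$ (including the other endpoint) is unchanged. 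The key technical step is to verify that (i) $\za_i'$ is again a zigzag arc, (ii) $w_{p_i'}(\za_i')=w_{p_i}(\za)-1$ at the new endpoint, and (iii) the weight at the other endpoint is preserved. Granted (i)--(iii), the inductive hypothesis gives the projective dimension of each summand of $\Omega M_\za$, and taking the maximum after adding one yields $\pd M_\za=\max\{w_{p_1}(\za),w_{p_2}(\za)\}$.

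For part (2), suppose $p_1\in\calp_\bpoint$. The polygon containing $p_1$ has cyclic boundary in $\zD^*$, so the rotation operation at $p_1$ always produces a new arc still terminating at $p_1$, and the cyclic analog of ``weight at $p_1$'' never becomes zero: after one full cycle one recovers (up to a shift in the first crossed arc) an arc that again ends at $p_1$ with the same positive structure. Consequently the syzygy always has a nontrivial summand supported at $p_1$, and the projective resolution is infinite. The main obstacle I anticipate is the careful bookkeeping in step (i)--(iii) of the inductive step, in particular matching the combinatorial peak/valley decomposition of $\omega(\za)$ with the geometric rotation around the polygon $\bbp_0$, and confirming that no new peaks or valleys are introduced elsewhere along $\za$ so that the two endpoint contributions truly decouple.
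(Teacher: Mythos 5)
The paper does not give a proof of this statement at all: it is quoted from Corollary~2.28 of the author's earlier work [C23], so there is nothing in the present paper to compare your argument against. I will therefore assess your proposal on its own merits, and there it has a genuine gap in the inductive step.

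Your overall strategy (induct on $w=\max\{w_{p_1}(\za),w_{p_2}(\za)\}$ via the syzygy of $M_\za$) is reasonable, and your base case is fine. The problem is your description of $\Omega M_\za$. You claim that each endpoint $p_i$ with $w_{p_i}(\za)\geq 1$ contributes a syzygy summand $M_{\za_i'}$, where $\za_i'$ is $\za$ with the single endpoint $p_i$ rotated and ``the rest of $\za$ (including the other endpoint) unchanged,'' and that (iii) the weight at the other endpoint is preserved. Both of these are incorrect, and (iii) in particular is fatal. If $\za_1'$ really shared the endpoint $p_2$ with $\za$ and had $w_{p_2}(\za_1')=w_{p_2}(\za)$, then $\max\{w_{p_1'}(\za_1'),w_{p_2}(\za_1')\}=\max\{w_{p_1}(\za)-1,\,w_{p_2}(\za)\}$, which equals $w$ whenever $w_{p_2}(\za)\geq w_{p_1}(\za)$; your induction on $w$ would then make no progress. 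What the projective cover of a string module actually produces as the nonprojective part of $\Omega M_\za$ are the ``overhang'' summands: at each end where the string descends, the corresponding projective cover summand $P_{v_j}$ has a maximal direct path running past the last vertex of $\omega(\za)$, and the tail of that direct path beyond $\omega(\za)$ is the syzygy summand at that end. This is a (generally much shorter) \emph{direct} string whose arc does \emph{not} have $p_2$ as an endpoint; at its far end its weight is $0$, precisely because the direct path is maximal. With that corrected picture the induction does close up: the endpoint summand has max-weight $w_{p_i}(\za)-1$, and $\pd M_\za=1+\max\{w_{p_1}-1,w_{p_2}-1\}=\max\{w_{p_1},w_{p_2}\}$.

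There is a second omission: when $\omega(\za)$ has several peaks, the kernel of the projective cover also has ``interior'' summands, one for each valley $u$ between consecutive peaks, obtained from the two overlapping maximal direct paths emanating from $u$. For a gentle algebra these interior summands are exactly the indecomposable projectives $P_u$ (this uses the gentle axioms on the two arrows into $u$ and their unique continuations), so they do not contribute to $\pd M_\za$; but your proof never mentions them, and without the observation that they are projective the identity $\pd M_\za=1+\max\{\pd$ of syzygy summands$\}$ could in principle be spoiled by a middle summand. Finally, for part (2) your reasoning (``after one full cycle one recovers... an arc that again ends at $p_1$'') is only a heuristic; one needs to exhibit, e.g., a nonzero $\Ext^k(M_\za,M_\zb)$ for arbitrarily large $k$ (via the infinitely many oriented intersections at the puncture, cf.\ Remark~\ref{rem:punctureext}), or a genuinely nonterminating sequence of tails, to conclude $\pd M_\za=\infty$.
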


\begin{definition}\label{definition:weighted intersections1}
	Let $\za$ and $\zb$ be two zigzag arcs with endpoints in $\calm_\bpoint$.
	\begin{enumerate}[\rm(1)]
		\item if $\za$ and $\zb$ share a common endpoint $p$ in $\calm_{\bpoint}$, then a \emph{weighted-oriented-intersection} from $\za$ to $\zb$ is an \emph{oriented intersection $\mathfrak{p}$ clockwise} from $\za$ to $\zb$ which arises from $p$, with \emph{weight} $w(\mathfrak{p})=w_p(\za)-w_p(\zb)$, see Figure \ref{figure:weight and co-weight};
		\item if $\za$ and $\zb$ intersect at an interior point $p$, then there is a \emph{weighted-oriented-intersection} $\mathfrak{p}$ from $\za$ to $\zb$, as well as a \emph{weighted-oriented-intersection} $\mathfrak{p}'$ from $\zb$ to $\za$, with \emph{weight} $w(\mathfrak{p})=1, w(\mathfrak{p}')=0$, or $w(\mathfrak{p})=0, w(\mathfrak{p}')=1$, depending on the position of the intersection, see \cite[Proposition-Definition 2.25]{C23} for the precise definition.
	\end{enumerate}  
\end{definition}

\begin{remark}
The definition of an oriented intersection from an $\bpoint$-arc $\za$ to an $\bpoint$-arc $\zb$ based at a common endpoint $p$ is similar to the definition of an oriented intersection between $\rpoint$-arcs $\ell^*$ in $\zD^*$ given in Definition \ref{definition:oriented intersection}, that is an angle locally from $\za$ to $\zb$ based at $p$ such that the angle is in the interior of the surface. The difference is that here we use clockwise orientation, rather than anti-clockwise orientation. We also mention that in order for the definition to be well-defined in the case of a loop, we treat the unique endpoint of a loop as two distinct endpoints.
\end{remark}

\begin{proposition}\label{prop:main-extensions}
	Let $(\cals,\calm,\zD^*)$ be a marked surface with a simple coordinate, and let $\za$ and $\zb$ be two zigzag arcs on the surface with endpoints in $\calm_{\bpoint}$. Then for any \emph{oriented intersection} from $\za$ to $\zb$ with weight $\omega$, there is a morphism in $\Ext^\omega(M_\za,M_\zb)$ associated to it.	
	Furthermore, all of such morphisms form a basis of the space $\Ext^\omega(M_\za,M_\zb)$, unless
$\za$ and $\zb$ are the same $\bpoint$-arc. In this case, the identity map is the extra basis map of $\Hom(M_\za,M_\zb)$.
\end{proposition}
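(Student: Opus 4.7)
The plan is to verify the statement by a case analysis on the nature of the oriented intersection---either based at a common endpoint in $\calm_\bpoint$ or at a transversal interior point---and by separately treating the $\Hom$-level ($\omega=0$) and the higher-weight case ($\omega\geq 1$).

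For $\omega=0$ I would rely on Crawley-Boevey's classification of morphisms between string modules \cite{C89}: every nonzero map $M_\za\to M_\zb$ is a \emph{graph map}, encoded by a common factor-substring of $\omega(\za)$ and $\omega(\zb)$ satisfying certain boundary conditions at each end. The first task is to translate these boundary conditions into geometric data on $\surf$. Inspecting the local picture of a zigzag arc crossing the polygons of $\zD^*$, as in Figure \ref{figure:def-zigzag}, one checks that such a common substring corresponds precisely to either a shared endpoint $p$ of $\za$ and $\zb$ in $\calm_\bpoint$ with $w_p(\za)\geq w_p(\zb)$ (the clockwise ``angle'' condition of Definition \ref{definition:weighted intersections1}) or to a transversal interior crossing sitting in the position that forces one of the two local weights to vanish. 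Matching the two sides arc-by-arc then identifies the weight-$0$ oriented intersections from $\za$ to $\zb$ with a basis of $\Hom_A(M_\za,M_\zb)$, modulo the exceptional case $\za=\zb$, where the identity map must be added by hand.

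For $\omega\geq 1$ the plan is to exploit the explicit minimal projective resolution of $M_\za$ built from the zigzag pattern of $\za$, whose existence and length is encoded by Proposition \ref{prop:proj-res-string}: each $\rpoint$-arc of $\zD^*$ traversed by $\za$ at depth $\omega$ from an endpoint contributes an indecomposable summand in cohomological degree $\omega$. Applying $\Hom_A(-,M_\zb)$ and computing the $\omega$-th cohomology, each graph map from the $\omega$-th projective to $M_\zb$ that is not in the image of the differential can be traced, via the dictionary of the previous paragraph, back to an oriented intersection on the surface whose weight is exactly $\omega$. Concretely, a class nonzero in $\Ext^\omega$ arises either from an endpoint $p$ of $\za$ with $w_p(\za)-w_p(\zb)=\omega$, or, for $\omega=1$ only, from an interior crossing of the appropriate type; both situations match Definition \ref{definition:weighted intersections1}. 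Spanning and linear independence then reduce to the corresponding facts for graph maps, once one verifies that distinct oriented intersections are supported on distinct summands of the resolution.

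The main obstacle will be the careful bookkeeping around punctured polygons and around loops. The zigzag condition in Definition \ref{definition:zigzag arcs} is engineered precisely so that in a polygon containing a puncture no spurious graph maps appear, but this must be verified by a local analysis of each admissible configuration in Figure \ref{figure:def-zigzag}. A secondary technical point is the case $\za=\zb$: the identity must be supplied separately, and in the loop case the two endpoints of $\za$ (geometrically a single marked point) must be treated as distinct so that each contributes its own collection of oriented intersections, in accordance with the convention fixed after Definition \ref{definition:marked surface}.
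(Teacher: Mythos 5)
This proposition carries no proof in the paper: it is stated as part of the recalled background and is imported from \cite{C23}, which the surrounding text explicitly defers to (``Let's interpret the intersections of zigzag arcs as the morphisms and extensions $\ldots$, see \cite{C23}''). There is therefore no internal argument to compare your sketch against; your proposal is an attempt at a stand-alone proof of a result the paper treats as external input.

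Judged as such, your outline takes the standard route and correctly identifies the key ingredients: Crawley-Boevey's classification of $\Hom$ between string modules by graph maps, the zigzag-driven minimal projective resolution for the higher $\Ext$'s, and the problem cases (punctured polygons, loops, and $\za=\zb$). Two cautions. First, the central claim --- that after applying $\Hom_A(-,M_\zb)$ to the resolution of $M_\za$, the degree-$\omega$ cohomology classes are indexed precisely by the weight-$\omega$ oriented intersections --- is the entire technical content of the statement, and your plan currently reduces it to ``Spanning and linear independence then reduce to the corresponding facts for graph maps,'' which is not a reduction at all: one must exhibit the differential explicitly and match kernels modulo images with oriented intersections, including checking that the zigzag condition really does eliminate cancellations coming from punctured polygons; that is precisely what \cite[Prop.-Def.\ 2.25, Thm.\ 2.30]{C23} carries out. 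Second, your condition at a shared endpoint is stated as $w_p(\za)\geq w_p(\zb)$, but for a fixed $\omega$ the relevant condition is an equality $w_p(\za)-w_p(\zb)=\omega$; in particular the weight-$0$ case requires $w_p(\za)=w_p(\zb)$, and the inequality is only the statement that the oriented intersection has non-negative weight, not that it contributes to $\Hom$.
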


\begin{remark}\label{rem:punctureext}
Since the projective dimension of a tilting module is finite, by Corollary \ref{prop:proj-res-string}, it arises from zigzag arcs with endpoints from $\calm_\bpoint$. Thus Proposition \ref{prop:main-extensions} is enough for us.
Suppose two zigzag arcs $\za$ and $\zb$ share an endpoint $q$ which is a puncture from $\calp_\bpoint$ (rather than from $\calm_\bpoint$). In that case, there are infinitely many oriented intersections from $\za$ to $\zb$, and from $\zb$ to $\za$ associated to $q$, which gives rise to infinite many of linear independent morphisms in $\Ext^\omega(M_\za,M_\zb)$ and $\Ext^\omega(M_\zb,M_\za)$ respectively. 
We refer the reader for more details in \cite{C23}.
\end{remark}

\section{Tilting modules over gentle algebras}\label{section:tilting-module}
Now we consider the problem of tilting completion in the module category over a gentle algebra. We start with realizing tilting modules on the marked surface.

\subsection{A geometric realization for  tilting modules
}\label{subsection: geo-tilting}
Let $(\cals,\calm,\zD^*)$ be a marked surface with a simple coordinate, and let $A$ be the associated gentle algebra.
Recall the definition of admissible collections and admissible dissections given in Subsection \ref{subsection:coord-diss}.

\begin{definition}\label{def:tilting-dissection}
(1) an admissible collection $\top$ on $(\cals,\calm)$ is called a \emph{pre-tilting collection on $(\cals,\calm,\zD^*)$} if any arc in $\top$ is zigzag, and the weight of any oriented intersection of two arcs from $\top$ is zero; 

(2) a pre-tilting collection is called a \emph{tilting dissection}, if it is an admissible dissection;

(3) a subset of a tilting dissection is called a \emph{partial-tilting collection}.
\end{definition}

The following proposition justifies the above naming.

\begin{proposition}\label{prop:tilting dissection}
Let $T=\oplus_{i=1}^m M_{\zg_i}$ be an $A$-module arising from a collection of zigzag $\bpoint$-arcs  $\top=\{\zg_i, 1\leq i \leq m\}$, then 
 
 (1) $T$ is a pre-tilting module if and only if $\top$ is a pre-tilting collection;
 
 (2) $T$ is a tilting module if and only if $\top$ is a tilting dissection;

 (3) $T$ is a partial-tilting module if and only if $\top$ is a partial-tilting collection.
\end{proposition}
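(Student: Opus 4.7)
The plan is to translate each of the algebraic conditions (T1)--(T3) into geometric conditions on $\top$ using the dictionary provided by Propositions \ref{prop:proj-res-string} and \ref{prop:main-extensions}.

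For part (1), condition (T1), finite projective dimension of every $M_{\gamma_i}$, translates via Proposition \ref{prop:proj-res-string} into the requirement that each $\gamma_i$ has both endpoints in $\calm_\bpoint$. Condition (T2), self-orthogonality, translates via Proposition \ref{prop:main-extensions} into the absence of oriented intersections of positive weight between arcs of $\top$ (the extra identity basis element in the self-case only affects $\Ext^0$, so it is harmless). By Definition \ref{definition:weighted intersections1}(2), any interior intersection contributes a weight-$1$ oriented intersection, so interior intersections are forbidden; by Definition \ref{definition:weighted intersections1}(1), every intersection at a shared $\bpoint$-endpoint must have weight $0$. The remaining nontrivial point is that such a collection is admissible in the sense of Definition \ref{definition:addmissable dissections}, namely that every connected component of $\cals\setminus\top$ contains at least one point of $\calm_\rpoint\cup\calp_\bpoint$. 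I plan to prove this by contradiction: if some region $R$ had no $\rpoint$-point or puncture, its boundary would be a closed cycle of zigzag arcs meeting at $\bpoint$-vertices with weight-$0$ corner intersections, and an analysis of which arcs of $\zD^*$ the boundary must traverse (using the definition of the weight in terms of position within the $\zD^*$-polygons) would force an inconsistency with $R$ containing no $\rpoint$-point or puncture.

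For part (2), assume $\top$ satisfies the pre-tilting conditions established in part (1). If $\top$ is additionally an admissible dissection (so $|\top| = n$), then (T1) and (T2) already follow; the remaining task is (T3). For this I would work on the surface to resolve the indecomposable projectives by summands of $T$: each projective $P_i$ is represented by a zigzag $\bpoint$-arc coming from the projective admissible dissection dual to $\zD^*$, and one inductively uses short exact sequences induced by smoothing the transverse intersections of this arc with arcs of $\top$ to express $P_i$ as an iterated extension of $T$-summands. Since each smoothing strictly reduces the number of intersections with $\top$, the resulting resolution is finite and assembles into the exact sequence required by (T3). Conversely, if $T$ is tilting then, by the standard fact that tilting modules have exactly $n$ non-isomorphic indecomposable summands, $|\top| = n$; by part (1), $\top$ is an admissible collection of $n$ arcs, hence an admissible dissection, so $\top$ is a tilting dissection.

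Part (3) is formal from (1) and (2): if $T$ is partial-tilting with complement $N$, then the zigzag-arc collection corresponding to $T\oplus N$ is a tilting dissection by (2) containing $\top$, so $\top$ is a partial-tilting collection; conversely, if $\top\subset\top'$ with $\top'$ a tilting dissection, then $T' = \bigoplus_{\gamma\in\top'}M_\gamma$ is tilting by (2) and $T$ is a direct summand of $T'$, so $T$ is partial-tilting. The main obstacles I expect are the admissibility step in part (1)---the combinatorial-topological argument ruling out a $\bpoint$-polygon in the complement of a pre-tilting collection---together with the explicit construction of the $T$-resolution of $A$ in part (2); both demand a careful analysis of how the arcs of $\top$ interact with the simple coordinate $\zD^*$.
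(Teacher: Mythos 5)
Your overall structure---translating (T1)--(T3) into geometric data and handing off part (3) formally from (2)---is right, and the forward direction of (1) together with part (3) essentially agree with the paper's proof. But for the two substantive steps you take a genuinely different route, and both are left as sketches rather than proofs. For the ``only if'' direction of (1) you correctly identify that one must show $\top$ is an admissible collection, and you propose to argue this by a direct contradiction on a hypothetical region $R$ with no point of $\calm_{\rpoint}\cup\calp_{\bpoint}$. That claim is true, but your outline (``an analysis of which arcs of $\zD^*$ the boundary must traverse\ldots{} would force an inconsistency'') is not an argument; the paper instead passes the projective resolution $\P_T$ to $\cald^b(A)$ and invokes \cite[Lemma 5.6 and Theorem 5.2]{APS23}, which say precisely that a pre-silting/silting complex on the surface has arcs forming an admissible collection/dissection. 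If you want a self-contained topological proof you would need to actually carry out the case analysis, and it is not short.

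The larger gap is in the forward direction of (2). You propose to verify (T3) directly by resolving each projective $P_i$ by summands of $T$ via iterated smoothings of intersections between the arc of $P_i$ and arcs of $\top$. This is not obviously correct as stated: the morphisms $P_i\to T_j$ you need for a coresolution are weight-$0$ intersections, which are $\Hom$'s, not $\Ext^1$'s, so ``smoothing an intersection gives a short exact sequence'' does not apply directly, and it is unclear that the procedure terminates in an exact sequence of the shape required by (T3). The paper sidesteps the entire construction: it observes that $\P_T$ has no negative self-extensions, so being a tilting complex is equivalent to being a silting complex (via \cite[Corollary 3.7]{Wei13}), and then applies the rank criterion \cite[Proposition 5.7]{APS23}, which says a pre-silting complex of full rank is silting. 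Since $\top$ being an admissible dissection forces $|\top|=n$, this closes the argument in two lines. Your constructive approach would be more informative if it worked, but as written it is a plan with a plausible-sounding but unverified core step, whereas the paper replaces both hard steps with citations to established silting theory for gentle algebras.
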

\begin{proof}
	Let $\top$ be a pre-tilting collection. Then by Corollary \ref{prop:proj-res-string}, $\pd T$ is finite, noticing that the endpoints of each arc from $\top$ are in $\calm_\bpoint$. 
	Furthermore, since there are no interior intersections between the arcs in $\top$ and the weight of any oriented boundary intersection of two arcs from $\top$ is zero, then by Proposition \ref{prop:main-extensions}, $T$ is orthogonal. Therefore $T$ is a pre-tilting module.
	
	Denote by $\P_T$ the projective resolution of $T$. 
	If $\top$ is a tilting-dissection, we will show that $T$ is a tilting module, or equivalently to showing that  $\P_T$ is a tilting complex, see for example \cite[Corollary 3.7]{Wei13}. On the other hand, because $\P_T$ is a projective resolution of a module, it has no negative extensions, and thus it is a tilting complex if and only if it is a silting complex. In fact, note that there are $n$ arcs in $\top$, and thus the rank of $\P_T$ equals the rank of $A$, therefore $\P_T$ is a silting complex by \cite[Proposition 5.7]{APS23}, which states that a pre-silting complex in $\ka$ is a silting complex if and only if its rank equals the rank of $A$.
    
	Now let $T$ be a pre-tilting/tilting module in $\ma$ arising from an $\bpoint$-arc system $\top$. In particular, $\P_T$ is a pre-silting/silting complex in $\dba$, and thus $\top$ is an admissible collection/dissection by \cite[Lemma 5.6]{APS23}/\cite[Theorem 5.2]{APS23}. Furthermore, since $T$ is orthogonal, the degrees of extensions between any two indecomposable direct summands $M_\za$ and $M_\zb$ of $T$ are zero. On the other hand, the weights of the intersections between $\za$ and $\zb$ coincide with the degrees of the corresponding extensions, therefore the weight of any intersection between arcs in $\top$ is zero, and thus by definitions, $\top$ is a pre-tilting collection/tilting dissection.
 
    The statement for the partial-tilting module follows from the second statement straightforwardly. 
\end{proof}

Enomoto conjectured in \cite{E23} that a self-orthogonal $A$-module has a finite projective dimension. In \cite{M23}, Marczinzik confirmed this conjecture for Iwanaga-Gorenstein monomial algebras.
In particular, since gentle algebras are always Iwanaga-Gorenstein by \cite{GR02}, a self-orthogonal module over a gentle algebra has finite projective dimension. This can also be observed using the surface model. More precisely, an indecomposable module over a gentle algebra has infinite projective dimension if and only if it is a string module, and there is at least one endpoint of the associated arc belonging to $\calp_\bpoint$. In this case, the module has infinite many self-intersections, see \cite[Theorem 2.30]{C23}. Therefore an indecomposable self-orthogonal module always arises from an $\bpoint$-arc with both endpoints in $\calm_\bpoint$, whose projective dimension is finite by Proposition \ref{prop:proj-res-string}, which confirms the claim.
Combine this observation with Proposition \ref{prop:tilting dissection}, we have the following
\begin{corollary}
	An orthogonal $A$-module $T$ is a tilting module if and only if it is of full rank, that is, $|T|=|A|$.
\end{corollary}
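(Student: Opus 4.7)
The forward direction is immediate: if $T$ is a tilting module, then by Proposition \ref{prop:tilting dissection}(2) the associated collection $\top$ is a tilting dissection, hence an admissible dissection of $(\cals,\calm,\zD^*)$. By the arc-count formula recalled in Subsection \ref{subsection:coord-diss}, any admissible dissection has exactly $n = |\calm_\bpoint|-\chi|$ arcs, which matches the rank of $A = A(\zD^*)$. Thus $|T|=n=|A|$.

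The reverse direction is where the new content lies. My plan is to reduce the claim to Proposition \ref{prop:tilting dissection} by upgrading the self-orthogonal hypothesis to pre-tilting. Suppose $T$ is self-orthogonal with $|T|=n$. The paragraph preceding the corollary explains the crucial point: since gentle algebras are Iwanaga-Gorenstein, Marczinzik's theorem (combined with the surface-model observation that any indecomposable string module associated with an $\bpoint$-arc having an endpoint in $\calp_\bpoint$ has infinitely many self-intersections, hence infinite $\Ext^\omega(M_\za,M_\za)$ by Proposition \ref{prop:main-extensions} and Remark \ref{rem:punctureext}) forces each indecomposable summand of $T$ to come from a zigzag $\bpoint$-arc with both endpoints in $\calm_\bpoint$. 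By Proposition \ref{prop:proj-res-string}(1), this gives $\pd T < \infty$, and therefore $T$ satisfies (T1) and (T2), i.e.\ $T$ is pre-tilting.

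Now I apply Proposition \ref{prop:tilting dissection}(1) to conclude that the arc collection $\top=\{\zg_1,\ldots,\zg_n\}$ corresponding to the indecomposable summands of $T$ is a pre-tilting collection. In particular, $\top$ is an \emph{admissible collection} and all weights of oriented intersections between distinct arcs of $\top$ vanish. Since $|\top|=n$ equals the rank of $(\cals,\calm)$, the characterization recalled at the end of Subsection \ref{subsection:coord-diss} (``an admissible collection is an admissible dissection if and only if it has $n$ arcs'') promotes $\top$ to an admissible dissection. Combined with the zero-weight property, this is precisely the definition of a tilting dissection. Applying Proposition \ref{prop:tilting dissection}(2) in the reverse direction yields that $T$ is a tilting module.

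The only nontrivial step is the passage from self-orthogonality to pre-tilting, i.e.\ the finiteness of projective dimension; this is handled purely by invoking the Iwanaga-Gorenstein/Marczinzik input together with the surface-model characterization of self-extensions. Once pre-tilting is established, everything else is bookkeeping via the geometric dictionary of Proposition \ref{prop:tilting dissection} and the arc-count in Subsection \ref{subsection:coord-diss}.
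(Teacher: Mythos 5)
Your proof is correct and takes essentially the same approach as the paper: you use the observation preceding the corollary (self-orthogonality forces finite projective dimension, via Iwanaga-Gorenstein/Marczinzik or equivalently the surface-model fact that arcs with a puncture endpoint have infinitely many self-intersections) to upgrade orthogonal to pre-tilting, and then apply Proposition \ref{prop:tilting dissection} together with the count that an admissible collection with $n$ arcs is an admissible dissection. The paper's implicit proof is exactly this combination, stated more tersely.
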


\begin{remark}
	(1) The corollary shows that the answer of $\bf{(Cn)}$ is positive for gentle algebras.
 
	(2) Since a $\tau$-tilting module is of full rank, the answer to Zhang's conjecture \cite{Z19} is positive for gentle algebras: any self-orthogonal $\tau$-tilting module is tilting.
\end{remark}

\subsection{A reduction construction by cutting surfaces}\label{section:reduction construction}

We will give a reduction construction for both the marked surface and the simple coordinate, which is the key tool we will use in the following.
Let $(\cals,\calm,\zD^*)$ be a marked surface with a simple coordinate and let $\zg$ be a zigzag simple $\bpoint$-arc on the surface with endpoints $p_1,p_2 \in \calm_\bpoint\cup \calp_\bpoint$.
We define a triple $(\cals_\zg,{\calm_\zg},{\zD^*_\zg})$ in the following way.

The surface $\cals_\zg$ is obtained by cutting $\cals$ along $\zg$, where $\zg$ becomes two boundary segments, which are denoted by $\zg'$ and $\zg''$, with endpoints $p_1',p_2'$ and $p_1'',p_2''$ respectively.

The set $\calm_\zg$ is defined as 
$$\calm_\zg=\calm\setminus \{p_1,p_2\}\cup\{p_1',p_2',p_1'',p_2''\}\cup\{q',q''\},$$
where $q'$ and $q''$ are newly added $\rpoint$-points on $\zg'$ and $\zg''$ (in between $\bpoint$-points $p_1'$ and $p_2'$, and $p_1''$ and $p_2''$ respectively). Depending on the positions of $p_1$ and $p_2$, that is on the boundary or in the interior of the surface, as well as whether $p_1$ coincides with $p_2$ or not, there are several possibilities, which we list in the Appendix, see Figure \ref{table:list}.

The simple coordinate $\zD^*_\zg$ is obtained in the following steps, with the notations used corresponding to those in Figure \ref{fig:coord.}:

{\bf Step one:} Let $\call=\{\ell^*\}$ be the set of arcs in $\zD^*$ that intersect $\zg$. Assume that $\zg$ cuts each $\ell^*$ into consecutive segments $\varsigma_1, \varsigma_2,\cdots,\varsigma_t$, where we denote the endpoints of $\varsigma_i$ by $\wp_{i-1}$ and  $\wp_i$. In particular, $\wp_0$ and $\wp_t$ are endpoints of $\ell^*$ which belong to $\calm_{\rpoint}$. 

{\bf Step two:} When we cut the surface along $\zg$, each point $\wp_i$ splits into two points $\wp'_i$ and $\wp''_i$, located on $\zg'$ and $\zg''$, respectively, excepting for $\wp_0$ and $\wp_t$.
Note that for each $2\leq i \leq t-1$, $\varsigma_i$ (as a line segment on $\cals$) induces a corresponding line segment $\overline{\varsigma}_i$ on $\cals_\zg$, with endpoints $\wp'_{i-1}/\wp''_{i-1}$ and $\wp'_{i}/\wp''_{i}$ located on $\zg'/\zg''$. 
For line segments $\overline{\varsigma}_1$ and $\overline{\varsigma}_t$, the endpoints are $\wp_0$, $\wp'_1/\wp''_1$ and $\wp'_{t-1}/\wp''_{t-1}$, $\wp_t$ respectively. 

{\bf Step three:} For each $2\leq i \leq t-1$, denote by $\ell^*_i$ the $\rpoint$-arc obtained by smoothing the segment $\overline{\varsigma}_i$ along with two segments ${\wp'_{i-1}}q'/{\wp''_{i-1}}q''$ and ${\wp'_{i}}q'/{\wp''_{i}}q''$ at common endpoints ${\wp'_{i-1}}/{\wp''_{i-1}}$ and ${\wp'_{i}}/{\wp''_{i}}$ respectively. 
Denote by $\ell^*_1$ the $\rpoint$-arc obtained by smoothing $\overline{\varsigma}_1$ and segment $\wp'_{1}q'/\wp''_{1}q''$ at the common endpoint ${\wp'_{1}}/{\wp''_{1}}$.
Denote by $\ell^*_t$ the $\rpoint$-arc obtained by smoothing $\overline{\varsigma}_t$ and segment $q'\wp'_{t-1}/q''\wp''_{t-1}$ at the common endpoint ${\wp'_{t-1}}/{\wp''_{t-1}}$.

{\bf Step four:} Finally, $\zD^*_\zg$ is defined as the set 
$$\zD^*_\zg=\zD^*\setminus \call\cup\{\ell^*_1,\cdots,\ell^*_t ~| ~\ell^*\in \call\}$$ of $\rpoint$-arcs on $\cals_\zg$, where we identify the arcs which are homotopic with each other. 

\begin{figure} 
	\centering 
\begin{tikzpicture}[>=stealth,scale=.5]
					
					\draw[thick ,fill=black] (-8,0) circle (0.13);
					\draw[thick ,fill=black] (8,0) circle (0.13);
				    \draw[thick, red ,fill=red] (-7.3,-3) circle (0.12);
				    \draw[thick, red ,fill=red] (7.3,-3) circle (0.12);
			
				    \draw[cyan, line width=1.5pt,red] (-7.3,-3) to[out=75, in=-180] 
				    (-4.5,3.4) to[out=0,in=100]
				    (-3,1.4)  ;
				    
				    \draw[cyan, dashed,line width=1.5pt,red] (-3,1.2) to[out=-80,in=120]
				    (-2.2,-0.7) ;
				
				    \draw[cyan,line width=1.5pt,red] (-2.2,-0.7) to[out=-60,in=-180]
				    (0,-3) to[out=0,in=-180]
				    (4,3.4) to[out=0,in=-180]
				    (7.3,-3) ;
				
					\draw[line width=1.5pt,fill=gray!25] (4,1.5) circle (0.5);
					
					\draw (-8.8,0) node {$p_{1}$};
				    \draw (-7.8,-3.3) node {$\wp_{0}$};
				    \draw (-6.1,-0.6) node {$\wp_{i-1}$};
				    \draw (2.4,-0.6) node {$\wp_{i}$};
				    \draw (5,-0.6) node {$\wp_{i+1}$};
				    \draw (7.9,-3.3) node {$\wp_{t}$};
				    \draw (8.8,0) node {$p_{2}$};
				    \draw (0,0.5) node {$\gamma$};
				    
				    \draw (-7.6,-1.5) node[red] {$\varsigma_{1}$};
				    \draw (0,-2) node[red] {$\ell^*$};
                    \draw (-4.5,3.8) node[red] {$\varsigma_{i}$};
				    \draw (4,3.8) node[red] {$\varsigma_{i+1}$};
				    \draw (6.5,-1.8) node[red] {$\varsigma_{t}$};

					\draw[line width=1.6pt,fill=white] (-3.4,1.2) to[bend left=60] (-2.6,1.2) (-3.6,1.3) to[bend left=-40] (-2.4,1.3);
     \draw [line width=1pt] (-8,0) to (8,0);
\end{tikzpicture}
\begin{tikzpicture}
[>=stealth,scale=.5]       			


       			\draw[cyan, line width=1.5pt,red] (-7.3,-3) to[out=75, in=-180] 
       			(-4.5,3.4) to[out=0,in=100]
       			(-3,1.4)  ;
       			
       			\draw[cyan, dashed,line width=1.5pt,red] (-3,1.2) to[out=-80,in=120]
       			(-2.2,-0.7) ;
       			
       			\draw[cyan,line width=1.5pt,red] (-2.2,-0.7) to[out=-60,in=-180]
       			(0,-3) to[out=0,in=-180]
       			(4,3.4) to[out=0,in=-180]
       			(7.3,-3) ;
\draw [line width=7.5pt,white] (-9,0) to (9,0);
\draw [line width=2pt] (-8,0.3) to (8,0.3);
\draw [line width=2pt] (-8,-0.3) to (8,-0.3);

       			\draw[thick ,fill=white] (-8,0.3) circle (0.13);
       			\draw[thick ,fill=white] (8,0.3) circle (0.13);
       			\draw[thick ,fill=white] (-8,-0.3) circle (0.13);
       			\draw[thick ,fill=white] (8,-0.3) circle (0.13);
       			\draw[thick, red ,fill=red] (-7.3,-3) circle (0.12);
       			\draw[thick, red ,fill=red] (7.3,-3) circle (0.12);
       			\draw[thick, red ,fill=red] (0,0.3) circle (0.12);
       			\draw[thick, red ,fill=red] (0,-0.3) circle (0.12);
       			\draw[line width=1.5pt,fill=gray!25] (4,1.5) circle (0.5);
       			
       			\draw (-8.3,0.9) node {$p'_{1}$};
       			\draw (-8.3,-0.9) node {$p''_{1}$};
       			\draw (8.6,0.8) node {$p'_{2}$};
       			\draw (8.6,-0.8) node {$p''_{2}$};
       			\draw (-1.5,1) node {$\zg'$};
       			\draw (-1.5,-1) node {$\zg''$};
          
\draw (-5.8,1) node {$\wp'_{i-1}$};
\draw (2.7,1) node {$\wp'_{i}$};
\draw (6.5,1) node {$\wp'_{i+1}$};
\draw (-5.8,-1) node {$\wp''_{i-1}$};
\draw (2.7,-1) node {$\wp''_{i}$};
\draw (4.7,-1) node {$\wp''_{i+1}$};

\draw[line width=1.5pt,red] (-7,.6) to (-6.6,.1);
\draw[line width=1.5pt,red] (-7,.1) to (-6.6,.6); 
\draw[line width=1.5pt,red] (-7-.07,.6-.65) to (-6.6-.07,.1-.65);
\draw[line width=1.5pt,red] (-7-.07,.1-.65) to (-6.6-.07,.6-.65);

\draw[line width=1.5pt,red] (-7+8.8,.6) to (-6.6+8.8,.1);
\draw[line width=1.5pt,red] (-7+8.8,.1) to (-6.6+8.8,.6); 
\draw[line width=1.5pt,red] (-7-.07+8.79,.6-.65) to (-6.6-.07+8.79,.1-.65);
\draw[line width=1.5pt,red] (-7-.07+8.79,.1-.65) to (-6.6-.07+8.79,.6-.65);

\draw[line width=1.5pt,red] (7-1.15,.6) to (6.6-1.15,.1);
\draw[line width=1.5pt,red] (7-1.15,.1) to (6.6-1.15,.6); 
\draw[line width=1.5pt,red] (7-1.1,.6-.65) to (6.6-1.1,.1-.65);
\draw[line width=1.5pt,red] (7-1.1,.1-.65) to (6.6-1.1,.6-.65);

       			\draw (0,1) node[red] {$q'$};
       			\draw (0,-1) node[red] {$q''$};
       			\draw (-7.7,-1.8) node[red] {$\overline{\varsigma}_{1}$};
       			\draw (-4.7,3.8) node[red] {$\overline{\varsigma}_{i}$};
       			\draw (4,3.8) node[red] {$\overline{\varsigma}_{i+1}$};
       			\draw (6.6,-1.8) node[red] {$\overline{\varsigma}_{t}$};
       			
       			\draw[line width=1.6pt,fill=white] (-3.4,1.2) to[bend left=60] (-2.6,1.2) (-3.6,1.3) to[bend left=-40] (-2.4,1.3);

\end{tikzpicture}
\begin{tikzpicture}[>=stealth,scale=.5]
\draw [line width=2pt] (-8,0.3) to (8,0.3);
\draw [line width=2pt] (-8,-0.3) to (8,-0.3);
					\draw [line width=1.5pt,red] (7.3,-3) to (0,-0.3);
					
					\draw[thick ,fill=white] (-8,0.3) circle (0.13);
					\draw[thick ,fill=white] (8,0.3) circle (0.13);
					\draw[thick ,fill=white] (-8,-0.3) circle (0.13);
					\draw[thick ,fill=white] (8,-0.3) circle (0.13);
					\draw[thick, red ,fill=red] (-7.3,-3) circle (0.12);
					\draw[thick, red ,fill=red] (7.3,-3) circle (0.12);
					\draw[thick, red ,fill=red] (0,0.3) circle (0.12);
					\draw[thick, red ,fill=red] (0,-0.3) circle (0.12);
					
					\draw[cyan, line width=1.5pt,red]
				    (0,0.3) to[out=160, in=-90]
				    (-5.5,2) to[out=90,in=180]
					(-4.5,3) to[out=0,in=100]
					(-3,1.4)  ;
					
					\draw[cyan, dashed,line width=1.5pt,red] (-3,1.2) to[out=-80,in=180]
					(-1.5,-2.7) ;
					
					\draw[cyan,line width=1.5pt,red] 
					(-1.5,-2.7) to[out=30,in=-90]
					(0,-0.3);
					
					\draw[cyan,line width=1.5pt,red] 
					(-7.3,-3) to[out=10,in=-130]
					(0,-0.3);
					
					\draw[cyan,line width=1.5pt,red] 
					(0,0.3) to[out=20,in=140]
					(4.6,2)to[out=-40,in=5]
					(0,0.3);
					
					\draw[line width=1.5pt,fill=gray!25] (3.6,1.5) circle (0.5);
					\draw [line width=6.5pt,white] (-4,0) to (-2,0);
					
	       		\draw (-8.3,0.9) node {$p'_{1}$};
       			\draw (-8.3,-0.9) node {$p''_{1}$};
       			\draw (8.6,0.8) node {$p'_{2}$};
       			\draw (8.6,-0.8) node {$p''_{2}$};
       			\draw (-6,1) node {$\zg'$};
       			\draw (-6,-1) node {$\zg''$};
					
					\draw (0,0.9) node[red] {$q'$};
					\draw (0.5,-1.3) node[red] {$q''$};
				     \draw (-4,-3) node[red] {$l^{*}_{1}$};
				     \draw (4,-2.5) node[red] {$l^{*}_{t}$};
				     \draw (-4.8,1.8) node[red] {$l^{*}_{i}$};
					\draw (5.6,1.8) node[red] {$l^{*}_{i+1}$};

					\draw[line width=1.6pt,fill=white] (-3.4,1.2) to[bend left=60] (-2.6,1.2) (-3.6,1.3) to[bend left=-40] (-2.4,1.3);
\end{tikzpicture}
	\caption{The top picture depicts a simple $\bpoint$-arc $\zg$ on $(\cals,\calm)$ cuts an $\rpoint$-arc $\ell^*$ into several segments $\varsigma_i$, with endpoints $\wp_{i-1}$ and $\wp_i$, which naturally induce segments $\overline{\varsigma}_i$ on the surface $\cals_\zg$, whose endpoints are $\wp'_{i-1}/\wp''_{i-1}$ and $\wp'_{i}/\wp''_{i}$, that is the red crossed points in the middle picture. Then after smoothing these segments with the line segments $\wp'_{i-1}q'/\wp''_{i-1}q''$ and $\wp'_iq'/\wp''_iq''$, we get the $\rpoint$-arcs $\ell^*_{i}$ on $(\cals_\zg,\calm_\zg)$. The final simple coordinate $\zD^*_\zg$ is obtained from $\zD^*$ after replacing each $\ell^*$ by arcs $\ell^*_{i}$.} 
	\label{fig:coord.} 
\end{figure}

\begin{example}\label{ex:cutting}
See Figure \ref{fig:ex-cutting} for a concrete example of cutting a marked surface with a simple coordinate.
	\begin{figure}[ht]
		\begin{center}
\begin{tikzpicture}[>=stealth,scale=0.6]
				\draw[line width=1.8pt,fill=white] (0,0) circle (4cm);
				\draw[thick,fill=gray!50] (0,0) circle (0.8cm);
				\path (0:4) coordinate (b1)
				(70:4) coordinate (b2)
				(110:4) coordinate (b3)
				(135:4) coordinate (b4)
				(200:4) coordinate (b5)
				(90:4) coordinate (b6)
				(-90:4) coordinate (b7)
				(-70:4) coordinate (b8)
				(-50:4) coordinate (b9)
				(45:4) coordinate (b10);
				
				\draw[red,line width=1.3pt]plot [smooth,tension=0.6] coordinates {(0,-4) (-1.7,0) (-0.8,1.2) (0,0.8)};
				\draw[red,line width=1.3pt]plot [smooth,tension=0.6] coordinates {(0,-4) (1.7,0) (.8,1.2) (0,0.8)};
				\draw[black,line width=1.3pt]plot [smooth,tension=0.6] coordinates {(b8) (0.5,-2.7) (-1.5,0.2) (-0.9,1.6)(0.5,1.5)(1.3,0.5)(1,-.6)(0,-0.8)};			
				\draw [red, line width=1.5pt] (b6) to (0,0.8);
				\draw[cyan,line width=1.5pt,red] (b6) to[out=-120,in=-10](b4);
				\draw[cyan,line width=1.5pt,red] (b6) to[out=-60,in=-170](b10);
				\draw[cyan,line width=1.5pt,red] (b7) to[out=50,in=170](b9);
				\draw[red] (-1.5,2.5) node {$1$};
				\draw[red] (.2,2.5) node {$2$};
				\draw[red] (1.5,2.5) node {$3$};
				\draw[red] (1.5,-1.8) node {$4$};
				\draw[red] (-1.5,-1.8) node {$5$};
				\draw[red] (1.5,-2.7) node {$6$};
				\draw[black] (-2,0.9) node {$\gamma$};
				\draw[black] (0,-1.3) node {$p_{2}$};
				\draw[black] (1.8,-4.1) node {$p_{1}$};
	\draw[thick,fill=white] (b1) circle (0.15cm)
				(b2) circle (.15cm)
				(b3) circle (.15cm)
				(b5) circle (.15cm)
				(b8) circle (.15cm);

				\draw[thick,red, fill=red] (b4) circle (0.15cm)
				(b6) circle (.15cm)
				(b7) circle (.15cm)
				(b9) circle (.15cm)
				(b10) circle (.15cm);
				
				\draw[thick, red ,fill=red] (0,0.8) circle (0.15cm);
				\draw[thick, black ,fill=white] (0,-0.8) circle (0.15cm);

				\draw[line width=2.5pt,blue, dashed, ->] (4.5, 0) -- (9, 0) ;
				\draw[line width=2.5pt,blue,  ->] (1.5, -4.5) -- (4, -7) ;			
\end{tikzpicture}
\begin{tikzpicture}[>=stealth,scale=0.6]
				\draw[line width=1.8pt,fill=white] (0,0) circle (4cm);
				\draw[thick,fill=gray!50] (0,0) circle (0.8cm);

				\path (0:4) coordinate (b1)
				(70:4) coordinate (b2)
				(110:4) coordinate (b3)
				(135:4) coordinate (b4)
				(200:4) coordinate (b5)
				(90:4) coordinate (b6)
				(-90:4) coordinate (b7)
				(-70:4) coordinate (b8)
				(-50:4) coordinate (b9)
				(45:4) coordinate (b10);
				
				\draw[red,line width=1.3pt]plot [smooth,tension=0.6] coordinates {(0,-4) (-1.7,0) (-0.8,1.5) (0,0.8)};
				\draw[red,line width=1.3pt]plot [smooth,tension=0.6] coordinates {(0,-4) (2,-.5) (1.4,1.3) (0,0.8)};

				\draw [red, line width=1.5pt] (b6) to (0,0.8);
				\draw[cyan,line width=1.5pt,red] (b6) to[out=-120,in=-10](b4);
				\draw[cyan,line width=1.5pt,red] (b6) to[out=-60,in=-170](b10);
				\draw[cyan,line width=1.5pt,red] (b7) to[out=50,in=170](b9);

			\draw[black] (-2.6,0.5) node {$\gamma$};
			\draw[black] (-1,-0.6) node {$p_{2}'$};
				\draw[black] (0.7,-1) node {$p_{2}''$};
				\draw[black] (0.6,-4.5) node {$p_{1}'$};
				\draw[black] (1.6,-4.3) node {$p_{1}''$};
				\draw[black,line width=1.3pt, fill=gray!50]plot [smooth,tension=0.6] coordinates {(0.75,-3.9) (0,-2.5) (-2.3,0) (-0.9,2.35)(0.5,1.8)(1.6,0.5)(1.4,-1.3)(0.5,-1.8)(-0.5,-0.6)(0,-0.8)(0.5,-1.5)(1.3,-1)(1.3,0.5)(0.5,1.5)(-0.9,2)(-2,0)(0.5,-2.5) (b8)};
				
				\draw [gray!50, line width=4.5pt] (-0.5,-0.5) to (0,-0.8);
				\draw [gray!50, line width=4.5pt] (0.75,-3.9) to (1.4,-3.8);
	\draw[thick,fill=white] (b1) circle (0.15cm)
				(b2) circle (.15cm)
				(b3) circle (.15cm)
				(b5) circle (.15cm)
				(b8) circle (.15cm);

				\draw[thick,red, fill=red] (b4) circle (0.15cm)
				(b6) circle (.15cm)
				(b7) circle (.15cm)
				(b9) circle (.15cm)
				(b10) circle (.15cm);
				\draw[thick, red ,fill=red] (0,0.8) circle (0.15cm);
				\draw[thick, black ,fill=white] (0,-0.8) circle (0.15cm);
				\draw[thick, black ,fill=white] (-0.5,-0.6) circle (0.15cm);
				\draw[thick, black ,fill=white] (0.75,-3.9) circle (0.15cm);
				\draw[line width=2.5pt,blue, dashed, ->] (-1.5, -4.5) -- (-4, -7) ;
		\end{tikzpicture}
		\begin{tikzpicture}[>=stealth,scale=0.6]
				\draw[line width=1.8pt,fill=white] (0,0) circle (4cm);
			
				\path (0:4) coordinate (b1)
				(45:4) coordinate (b2)
				(90:4) coordinate (b3)
				(135:4) coordinate (b4)
				(180:4) coordinate (b5)
				(225:4) coordinate (b6)
				(270:4) coordinate (b7)
				(315:4) coordinate (b8)
				
				(22.5:4) coordinate (r1)
				(67.5:4) coordinate (r2)
				(112.5:4) coordinate (r3)
				(157.5:4) coordinate (r4)
				(202.5:4) coordinate (r5)
				(247.5:4) coordinate (r6)
				(292.5:4) coordinate (r7)
				(337.5:4) coordinate (r8);
				\draw[cyan,line width=1.5pt,red] (r3) to[out=-90,in=0](r4);
				\draw[cyan,line width=1.5pt,red] (r3) to[out=-45,in=-135](r2);
				\draw[cyan,line width=1.5pt,red] (r3) to[out=-60,in=60](r6);
				\draw[cyan,line width=1.5pt,red] (r6) to[out=90,in=-10](r5);
				\draw[cyan,line width=1.5pt,red] (r6) to[out=45,in=180](r8);
				\draw[cyan,line width=1.5pt,red] (r8) to[out=-160,in=90](r7);
				\draw[cyan,line width=1.5pt,red] (r8) to[out=135,in=-135](r1);
				\draw[cyan,line width=1.5pt,black] (b6) to[out=0,in=120](b7);
				\draw[cyan,line width=1.5pt,black] (b8) to[out=120,in=-150](b1);
				\draw[thick,fill=white] (b1) circle (0.15cm)
				(b2) circle (.15cm)
				(b3) circle (.15cm)
				(b4) circle (.15cm)
				(b5) circle (.15cm)
				(b6) circle (.15cm)
				(b7) circle (.15cm)
				(b8) circle (.15cm);

				\draw[thick,red, fill=red] (r1) circle (0.15cm)
				(r2) circle (0.15cm)
				(r3) circle (0.15cm)
				(r4) circle (0.15cm)
				(r5) circle (0.15cm)
				(r6) circle (0.15cm) 
				(r7) circle (0.15cm)
				(r8) circle (0.15cm);	
				
				\draw[red] (-2.3,1.7) node {$1$};
				\draw[red] (-0.5,2.1) node {$2$};
				\draw[red] (1,2.8) node {$3$};
				\draw[red] (1.5,-1.5) node {$4$};
				\draw[red] (-2.1,-1.8) node {$5$};
				\draw[red] (2.8,0.8) node {$6$};
				\draw[red] (1.5,-2.5) node {$7$};
				\draw[red] (-1.8,-4.2) node {$q'$};
				\draw[red] (4.2,-1.9) node {$q''$};
				
				\draw[black] (-1.5,-2.4) node {$\gamma_1$};
				\draw[black] (-1.2,-4.4) node {$\gamma'$};
				\draw[black] (2.5,-1) node {$\gamma_2$};
				\draw[black] (4.5,-1) node {$\gamma''$};
				\draw[black] (0,-4.6) node {$p_{2}'$};
				\draw[black] (3.4,-3.4) node {$p_{2}''$};
				\draw[black] (-3,-3.5) node {$p_{1}'$};
				\draw[black] (4.7,0) node {$p_{1}''$};
	\end{tikzpicture}
		\end{center}	
  \caption{
 An example of the cutting surface.} 
	\label{fig:ex-cutting}
	\end{figure}
\end{example}

\begin{proposition-definition}\label{prop-def-cut-surf}
Let $(\cals,\calm,\zD^*)$ be a marked surface with a simple coordinate, and let $\zg$ be a zigzag simple $\bpoint$-arc on the surface.
Then  $\zD^*_\zg$ is a simple coordinate on the marked surface $(\cals_\zg,\calm_\zg)$.
We call $(\cals_\zg,{\calm_\zg},{\zD^*_\zg})$ the \emph{cutting surface} of $(\cals,\calm,\zD^*)$ along $\zg$.
\end{proposition-definition}
\begin{proof}
Denote by $\zG=\{\varepsilon_1,\cdots,\varepsilon_r\}$ the set of segments obtained from truncating $\zg$ by using the arcs in $\zD^*$. 
Recall that $\mathcal{L}$ is the set of arcs $\ell^*$ in $\zD^*$ that intersect with $\zg$, and $\zg$ divides each $\ell^*$ into segments $\varsigma_1,\cdots,\varsigma_t$. 
Since the arcs in $\zD^*$ cut  $\cals$ into polygons and $\zg$ is simple, the segments in  $\zD^*\setminus \mathcal{L}\cup \{\varsigma_1,\cdots,\varsigma_t, \ell^*\in \mathcal{L}\}\cup \zG$ also cut $\cals$ into polygons.
Furthermore, there are two kinds of such polygons, one kind has no $\bpoint$-point, and another contains $\bpoint$-points. 
Note that $\zD^*_\zg$ is obtained by smoothing the segments in $\zD^*\setminus \mathcal{L}\cup \{\varsigma_1,\cdots,\varsigma_t, \ell^*\in \mathcal{L}\}\cup \zG$.
Therefore, to prove that $\zD^*_\zg$ is a simple coordinate on $(\cals_\zg,\calm_\zg)$, it is enough to show that the first kind of polygon is contractible (to an $\rpoint$-arc) when smoothing the segments, and the second kind of polygon will become a polygon with exactly one $\bpoint$-point from $\calm_\zg$.

Assume that we have a polygon $\bbp_\zg$ of the first kind. Since there is no $\bpoint$-point, it is surrounded by red segments $\varsigma_i$ and black segments $\varepsilon_j$ (rather than boundary segments with $\bpoint$-points), as depicted in the left picture of Figure \ref{figure:contra-gon}, where each $\varepsilon_j$ turns at a corner of $\zD^*$, since $\zg$ is zigzag with respect to $\zD^*$. Furthermore, if there are more than two red segments in $\bbp_\zg$, then it implies that there is a polygon of $\zD^*$ without a $\bpoint$-point, which contradicts the fact that $\zD^*$ is a simple coordinate. Therefore the polygon has at most two red segments, which must be a quadrangle as depicted in the middle picture of Figure \ref{figure:contra-gon} or a triangle formed by two segments $\varsigma_i$ and one segment $\varepsilon_j$ as depicted in the right picture of Figure \ref{figure:contra-gon}. For both cases, the polygon $\bbp_\zg$ contracts to an $\rpoint$-arc on $(\cals_\zg,\calm_\zg)$ after smoothing the segments.

Now assume that we have a polygon $\bbp_\zg$ of the second kind. Note that $\bbp_\zg$ must be part of some polygon of $\zD^*$, say $\bbp$. Let $p$ be the unique $\bpoint$-point in $\bbp$.
At first, we assume that $p$ is not an endpoint of $\zg$. 
Note that $\zg$ may pass through the polygon $\bbp$ several times, which cuts some $\rpoint$-arcs $\ell^*$ in $\bbp$ into segments $\varsigma_i\prime s$, see the left picture in Figure \ref{figure:keeppolygon}, which shows the case that $p$ belongs to the boundary of the surface. The argument is similar for the case when $p$ is a puncture.
At the same time, the $\rpoint$-arcs in $\bbp$ cut $\zg$ into segments, where we label the ones closest to the interior of $\bbp$ by $\varepsilon_j\prime s$.
Then $\bbp_\zg$ is formed by the arcs in $\bbp$ which do not intersect $\zg$, the pre-existing boundary segment that $p$ belongs to, and the segments $\varepsilon_j\prime s$, $\varsigma_i\prime s$, see the left picture of Figure \ref{figure:keeppolygon}.
Finally, after cutting the surface along $\zg$ and then smoothing $\varsigma_i\prime s$ along $\zg'/\zg''$, $\bbp_\zg$ induces a polygon $\widehat{\bbp}_\zg$ containing a unique $\bpoint$-point $\widehat{p}$, which is induced from $p$.
Recall that we add new $\rpoint$-point $q'/q''$ on $\zg'/\zg''$, and up to homotopy (concerning the arcs $\zg', \zg'' $ ), we can always assume that these points are at $\varepsilon_j\prime s$. See the polygon $\widehat{\bbp}_\zg$ in the right picture of Figure \ref{figure:keeppolygon}. In particular, note that the number of $\rpoint$-arcs in $\bbp$ coincides with the number of $\rpoint$-arcs in the induced polygon $\widehat{\bbp}_\zg$. 
 
     	\begin{figure}
		\begin{center}
		\begin{tikzpicture}[>=stealth,scale=.7]

   \draw[red!20,thick,fill=red!20] (0,0) circle (0.1);
    \draw[red!20,thick,fill=red!20] (4,4) circle (0.1);

\draw[red!20,dashed,line width=1.5pt] (4,4)to (-1,6)to(-3,3)to(0,0)to(4,4); 
\draw[red,line width=1.5pt]  (-1,6)to(-3,3); 
\draw[red,line width=1.5pt](1.07,1.07)to(3.2,3.2); 

\draw[red,line width=1.5pt](-.78,.8)to (-3,3); 
\draw[red,line width=1.5pt](2.3,4.7)to(-1,6); 

\draw[red,thick,fill=red] (-3,3) circle (0.1);
\draw[red,thick,fill=red] (-1,6) circle (0.1);
  
\draw[line width=1pt,bend left](5,2)to(2,6);   
\draw[line width=1pt,bend left](-2,.2)to(3,.5);   
       
       \node at (1,3) {$\bbp_\zg$};
       \node at (2.3,3.6) {$\varepsilon_j$};
        \node[red] at (-1.5,2) {$\varsigma_i$};
        \node[red] at (2,2.5) {$\varsigma_i$};
        \node[red] at (.5,5) {$\varsigma_i$};
        \node at (.5,1.4) {$\varepsilon_j$};
			\end{tikzpicture}
   \begin{tikzpicture}[>=stealth,scale=.7]
    \draw[red!20,thick,fill=red!20] (4,4) circle (0.1);

\draw[red!20,dashed,line width=1.5pt] (4,4)to (-1,6); 
\draw[red!20,dashed,line width=1.5pt] (0,0)to(4,4); 
\draw[red,line width=1.5pt](1.6,1.6)to(3.2,3.2); 

\draw[red,line width=1.5pt](2.3,4.7)to(0,5.6);

\draw[line width=1pt,bend left](5,2)to(2,6);   
\draw[line width=1pt,bend right](0,6.5)to(3,.5);   
       
       \node at (1.5,3.5) {$\bbp_\zg$};
       \node at (2.3,3.6) {$\varepsilon_j$};
        \node[red] at (2.5,2) {$\varsigma_i$};
        \node[red] at (1.3,5.4) {$\varsigma_i$};
        \node at (0,3.6) {$\varepsilon_j$};
\end{tikzpicture}
\begin{tikzpicture}[>=stealth,scale=.7]
\draw[red,thick,fill=red] (4,4) circle (0.1);

\draw[red,line width=1.5pt] (4,4)to (-1,6); 
\draw[red,line width=1.5pt] (0,0)to(4,4); 
  
\draw[line width=1pt,bend right](0,6.5)to(3,.5);   
       
       \node at (1.5,3.5) {$\bbp_\zg$};
        \node[red] at (2.5,2) {$\varsigma_i$};
        \node[red] at (1.3,5.4) {$\varsigma_i$};
        \node at (0,3.6) {$\varepsilon_j$};
			\end{tikzpicture}
\end{center}
\begin{center}
			\caption{The leftmost is a polygon $\bbp_\zg$ formed by $\rpoint$-arcs or segments in the set $\zD^*\setminus \mathcal{L}\cup \{\varsigma_1,\cdots,\varsigma_t, \ell^*\in \mathcal{L}\}\cup \{\varepsilon_1,\cdots,\varepsilon_r\}$, which contains no $\bpoint$-point. Then it must be of the form as depicted in the right two pictures, where in both cases the polygon can be contracted to an $\rpoint$-arc in the cutting surface.}\label{figure:contra-gon}
		\end{center}
	\end{figure}
 
\begin{figure}
\begin{center}
\begin{tikzpicture}[>=stealth,scale=.7]
\draw[red,line width=1.5pt] (1,0)--(3,1)--(3,3)--(1,4);
\draw[red,line width=1.5pt,dashed] (-1,4)--(1,4);
\draw[red,line width=1.5pt] (-1,0)--(-3,1)--(-3,3)--(-1,4);
\draw [ gray!60, line width=4pt] (-2,-.07)--(2,-.07);		
\draw[line width=2pt] (-2,0)--(2,0);		
     	\draw[line width=1pt,bend right](3.5,2)to(2.5,.2);    
     	\draw[line width=1pt,bend right](-4,2)to(-2,4);      
        \draw[line width=1pt,bend right](-3.8,1.7)to(-1.7,4.2);    
	\draw[red] (2.7,2.3) node {$\varsigma_i$};
 	\draw[red] (1.7,.7) node {$\varsigma_i$};
  	\draw[red] (-1.5,3.3) node {$\varsigma_i$};
 	\draw[red] (-2.7,1.5) node {$\varsigma_i$};
    \draw[] (2.3,1.4) node {$\varepsilon_j$};   
    \draw[] (-1.8,2.4) node {$\varepsilon_j$};   
     \draw[thick,fill=white] (0,0) circle (0.1);
	\draw (0,-.5) node {$p$};
	\node at (0,2) {$\bbp_\zg$};
	\draw[red,thick,fill=red] (1,0) circle (0.1);
				\draw[red,thick,fill=red] (-1,0) circle (0.1);
				\draw[red,thick,fill=red] (3,1) circle (0.1);
				\draw[red,thick,fill=red] (3,3) circle (0.1);
				\draw[red,thick,fill=red] (1,4) circle (0.1);
				\draw[red,thick,fill=red] (1,0) circle (0.1);
				\draw[red,thick,fill=red] (-3,1) circle (0.1);
				\draw[red,thick,fill=red] (-3,3) circle (0.1);
				\draw[red,thick,fill=red] (-1,4) circle (0.1);
    	\node at (6,0) {};
			\end{tikzpicture}
   \begin{tikzpicture}[>=stealth,scale=.7]
				\draw[red!20,line width=1.5pt] (1,0)--(3,1)--(3,3);
    			\draw[red,line width=1.5pt] (3,3)--(1,4);
				\draw[red,line width=1.5pt,dashed] (-1,4)--(1,4);
				\draw[red,line width=1.5pt] (-1,0)--(-3,1);
    			\draw[red!20,line width=1.5pt] (-3,1)--(-3,3)--(-1,4);
		\draw [ gray!60, line width=4pt] (-2,-.07)--(2,-.07);		
		\draw[line width=2pt] (-2,0)--(2,0);		
     	\draw[black!20,line width=1pt,bend right](3.5,2)to(2.5,.2);    
     	\draw[black!20,line width=1pt,bend right](-4,2)to(-2,4);      
        \draw[black!20,line width=1pt,bend right](-3.8,1.7)to(-1.7,4.2);
     \draw[thick,fill=white] (0,0) circle (0.1);
	\draw (0,-.5) node {$p=\widehat{p}$};
	\node at (0,2) {$\widehat{\bbp}_\zg$};
	\node at (-1.7,2.3) {$q'/q''$};
	\node at (2,1.5) {$q'/q''$};
			\draw[red,thick,fill=red] (1,0) circle (0.1);
				\draw[red,thick,fill=red] (-1,0) circle (0.1);
				\draw[red!20,fill=red!20] (3,1) circle (0.1);
				\draw[red,thick,fill=red] (3,3) circle (0.1);
				\draw[red,thick,fill=red] (1,4) circle (0.1);
				\draw[red,thick,fill=red] (1,0) circle (0.1);
				\draw[red,thick,fill=red] (-3,1) circle (0.1);
				\draw[red!20,fill=red!20] (-3,3) circle (0.1);
				\draw[red,thick,fill=red] (-1,4) circle (0.1);
				\draw[red,thick,fill=red] (-2.33,2.7) circle (0.1);
				\draw[red,thick,fill=red] (2.7,1.2) circle (0.1);
    		\draw[red,line width=1.5pt] (1,0)--(2.7,1.2)--(3,3);
          	\draw[red,line width=1.5pt] (-3,1)--(-2.33,2.7) --(-1,4);
			\end{tikzpicture}
		\end{center}
		\begin{center}
			\caption{The zigzag simple $\bpoint$-arc $\zg$ goes through a boundary polygon $\bbp$ of $\zD^*$ several times, which cuts some $\rpoint$-arcs in $\zD^*$ as segments, denoted by ${\varsigma_i}'$. At the same time, the $\rpoint$-arcs in  $\zD^*$ cut $\zg$ as segments ${\varepsilon_j}'$. By replacing the $\rpoint$-arcs in $\bbp$ that intersects $\zg$, we get a polygon $\bbp_\zg$, as depicted in the left picture. Then after smoothing, we get a polygon $\widehat{\bbp}_\zg$ on $\cals_\zg$ in the right picture.}\label{figure:keeppolygon}
		\end{center}
	\end{figure}
When $p$ is an endpoint of $\zg$, it splits as some new $\bpoint$-points: $p'_1,p'_2,p''_1,p''_2$, where some of them may coincide. A similar argument as used above shows that each such $\bpoint$-point induced by $p$ belongs to a polygon $\widehat{\bbp}_\zg$ of $\zD^*_\zg$, and it is the unique $\bpoint$-point that belongs to the polygon.
The proof is a case-by-case checking, depending on the position of $p$, on the boundary or in the interior. We explain the construction when $\zg$ is a loop, see the pictures in Figure \ref{fig:twocases}. 
\begin{figure} 
	\centering 
 \begin{tikzpicture}[>=stealth,scale=.5]
\draw[red,line width=1.5pt] (1,0)--(3,1)--(3,3)--(1,4)--(-1,4)--(-3,3)--(-3,1)--(-1,0)--(1,0);

\draw (1+.3,4+.3) node {\tiny$q_1$};
\draw (3+.5,3) node {\tiny$q_2$};
\draw (3+.5,1) node {\tiny$q_3$};
\draw (1+.3,0-.3) node {\tiny$q_4$};
\draw (-1-.3,0-.3) node {\tiny$q_5$};
\draw (-3-.5,1) node {\tiny$q_6$};
\draw (-3-.5,3) node {\tiny$q_7$};
\draw (-1-.3,4+.3) node {\tiny$q_8$};

\draw[line width=1pt](0,2)to(0,5);    
\draw[line width=1pt](0,2)to(3,0);

\draw[line width=1pt,bend right](-4,2)to(-2,4);      
\draw[line width=1pt,bend right](-3.8,1.7)to(-1.7,4.2);
\draw[thick,fill=white] (0,2) circle (0.1);
\draw (0,2-.5) node {\tiny$p$};
\draw (-1,2) node {$\bbp_\zg$};
\draw (1.2,2.5) node {$\bbp_\zg$};
\draw (1.5,2-.5) node {\tiny$\zg$};
\draw (.3,3) node {\tiny$\zg$};
\draw (-1.8,3) node {\tiny$\zg$};
\draw[red,thick,fill=red] (1,0) circle (0.1);
	
\draw[red,thick,fill=red] (-1,0) circle (0.1);

\draw[red,thick,fill=red] (3,1) circle (0.1);

\draw[red,thick,fill=red] (3,3) circle (0.1);

\draw[red,thick,fill=red] (1,4) circle (0.1);

\draw[red,thick,fill=red] (1,0) circle (0.1);

\draw[red,thick,fill=red] (-3,1) circle (0.1);

\draw[red,thick,fill=red] (-3,3) circle (0.1);

\draw[red,thick,fill=red] (-1,4) circle (0.1);

\end{tikzpicture}
\begin{tikzpicture}[>=stealth,scale=.5]
\draw[red,line width=1.5pt] (1,0)--(3,1)--(3,3)--(0,4);
\draw[red,line width=1.5pt] (-1,0)--(-3,1)--(-3,3)--(0,4);
\draw [ gray!60, line width=4pt] (-2,-.07)--(2,-.07);		
\draw[line width=2pt] (-2,0)--(2,0);		
\draw[thick,fill=white] (0,0) circle (0.1);
\draw[red,thick,fill=red] (1,0) circle (0.1);
	
\draw[red,thick,fill=red] (-1,0) circle (0.1);

\draw[red,thick,fill=red] (3,1) circle (0.1);

\draw[red,thick,fill=red] (3,3) circle (0.1);

\draw[red,thick,fill=red] (1,0) circle (0.1);

\draw[red,thick,fill=red] (-3,1) circle (0.1);

\draw[red,thick,fill=red] (-3,3) circle (0.1);

\draw[red,thick,fill=red] (0,4) circle (0.1);

\draw (0,-.5) node {\tiny$p'$};
\draw (3+.5,3) node {\tiny$q'$};
\draw (3+.5,1) node {\tiny$q_8$};
\draw (1+.3,0-.5) node {\tiny$q'$};
\draw (2.3,0) node {\tiny$\zg'$};
\draw (-1-.3,0-.5) node {\tiny$q'$};
\draw (-3-.5,1) node {\tiny$q_4$};
\draw (-3-.5,3) node {\tiny$q_5$};
\draw (0,4+.5) node {\tiny$q_6$};
\draw (0,2) node {$\widehat{\bbp}_\zg$};
\end{tikzpicture}
\begin{tikzpicture}[>=stealth,scale=.5]
\draw[red,line width=1.5pt] (1,0)--(2.6,2)--(0,4);
\draw[red,line width=1.5pt] (-1,0)--(-2.6,2)--(0,4);
\draw [ gray!60, line width=4pt] (-2,-.07)--(2,-.07);		
\draw[line width=2pt] (-2,0)--(2,0);		
\draw[thick,fill=white] (0,0) circle (0.1);
\draw[red,thick,fill=red] (1,0) circle (0.1);
	
\draw[red,thick,fill=red] (-1,0) circle (0.1);

\draw[red,thick,fill=red] (2.6,2) circle (0.1);

\draw[red,thick,fill=red] (1,0) circle (0.1);

\draw[red,thick,fill=red] (-2.6,2) circle (0.1);

\draw[red,thick,fill=red] (0,4) circle (0.1);
\draw (0,-.5) node {\tiny$p''$};
\draw (3.2,2) node {\tiny$q_3$};
\draw (1+.3,0-.5) node {\tiny$q''$};
\draw (2.5,0) node {\tiny$\zg''$};
\draw (-1-.3,0-.5) node {\tiny$q''$};
\draw (-3.2,2) node {\tiny$q_1$};
\draw (0,4+.5) node {\tiny$q_2$};
\draw (0,2) node {$\widehat{\bbp}_\zg$};
\end{tikzpicture} 
\begin{tikzpicture}[>=stealth,scale=.5]
\draw[red,line width=1.5pt] (1,0)--(3,1)--(3,3)--(1,4)--(-1,4)--(-3,3)--(-3,1)--(-1,0);
\draw [ gray!60, line width=4pt] (-2,-.07)--(2,-.07);	
\draw[line width=2pt] (-2,0)--(2,0);		

\draw (1+.3,4+.3) node {\tiny$q_1$};
\draw (3+.5,3) node {\tiny$q_2$};
\draw (3+.5,1) node {\tiny$q_3$};
\draw (1,0-.5) node {\tiny$q_4$};
\draw (-1,0-.5) node {\tiny$q_5$};
\draw (-3-.5,1) node {\tiny$q_6$};
\draw (-3-.5,3) node {\tiny$q_7$};
\draw (-1-.3,4+.3) node {\tiny$q_8$};
\draw (-1.8,3) node {\tiny$\zg$};
\draw (-1,2) node {$\bbp_\zg$};
\draw (1.5,2) node {$\bbp_\zg$};
\draw (.5,2.9) node {$\bbp_\zg$};
\draw[line width=1pt](0,0)to(0,5);    
\draw[bend left,line width=1pt](0,0)to(3.5,4.5);

\draw[line width=1pt,bend right](-4,2)to(-2,4);      
\draw[line width=1pt,bend right](-3.8,1.7)to(-1.7,4.2);
\draw[thick,fill=white] (0,0) circle (0.1);
\draw (0,0-.5) node {\tiny$p$};
\draw (.5,1) node {\tiny$\zg$};
\draw (-.3,1) node {\tiny$\zg$};
\draw[red,thick,fill=red] (1,0) circle (0.1);
	
\draw[red,thick,fill=red] (-1,0) circle (0.1);

\draw[red,thick,fill=red] (3,1) circle (0.1);

\draw[red,thick,fill=red] (3,3) circle (0.1);

\draw[red,thick,fill=red] (1,4) circle (0.1);

\draw[red,thick,fill=red] (1,0) circle (0.1);

\draw[red,thick,fill=red] (-3,1) circle (0.1);

\draw[red,thick,fill=red] (-3,3) circle (0.1);

\draw[red,thick,fill=red] (-1,4) circle (0.1);

\end{tikzpicture}
\begin{tikzpicture}[>=stealth,scale=.5]
\draw[red,line width=1.5pt] (1,0)--(2.6,2)--(0,4);
\draw[red,line width=1.5pt] (-1,0)--(-2.6,2)--(0,4);
\draw [ gray!60, line width=4pt] (-2,-.07)--(2,-.07);		
\draw[line width=2pt] (-2,0)--(2,0);		
\draw[thick,fill=white] (0,0) circle (0.1);
\draw[red,thick,fill=red] (1,0) circle (0.1);
	
\draw[red,thick,fill=red] (-1,0) circle (0.1);

\draw[red,thick,fill=red] (2.6,2) circle (0.1);

\draw[red,thick,fill=red] (1,0) circle (0.1);

\draw[red,thick,fill=red] (-2.6,2) circle (0.1);

\draw[red,thick,fill=red] (0,4) circle (0.1);
\draw (0,-.5) node {\tiny$p_1'$};
\draw (0,2) node {$\widehat{\bbp}_\zg$};
\draw (2,2) node {\tiny$q_8$};
\draw (2.5,0) node {\tiny$\zg'$};
\draw (1+.3,0-.5) node {\tiny$q'$};
\draw (-1-.3,0-.5) node {\tiny$q_5$};
\draw (-2,2) node {\tiny$q_6$};
\draw (0,4+.5) node {\tiny$q'$};
\end{tikzpicture} 
\begin{tikzpicture}[>=stealth,scale=.5]
\draw[red,line width=1.5pt] (1,0)--(2.3,3.7)--(-2.3,3.7)--(-1,0);
\draw [gray!60, line width=4pt] (-2,-.07)--(2,-.07);		
\draw[line width=2pt] (-2,0)--(2,0);		
\draw[thick,fill=white] (0,0) circle (0.1);
\draw[red,thick,fill=red] (1,0) circle (0.1);
	
\draw[red,thick,fill=red] (-1,0) circle (0.1);

\draw[red,thick,fill=red] (2.3,3.7) circle (0.1);

\draw[red,thick,fill=red] (1,0) circle (0.1);

\draw[red,thick,fill=red] (-2.3,3.7) circle (0.1);

\draw (-2.3,4.1) node {\tiny$q_2$};
\draw (2.3,4.1) node {\tiny$q_3$};
\draw (2.3,0) node {\tiny$\zg'$};
\draw (0,2) node {$\widehat{\bbp}_\zg$};
\draw (0,-.5) node {\tiny$p_2'$};
\draw (1+.3,0-.5) node {\tiny$q_4$};
\draw (-1-.3,0-.5) node {\tiny$q'$};
\end{tikzpicture} 
\begin{tikzpicture}[>=stealth,scale=.5]
\draw[red,line width=1.5pt] (1,0)--(0,4);
\draw[red,line width=1.5pt] (-1,0)--(0,4);
\draw [ gray!60, line width=4pt] (-2,-.07)--(2,-.07);		
\draw[line width=2pt] (-2,0)--(2,0);		
\draw[thick,fill=white] (0,0) circle (0.1);
\draw[red,thick,fill=red] (1,0) circle (0.1);
	
\draw[red,thick,fill=red] (-1,0) circle (0.1);
\draw[red,thick,fill=red] (1,0) circle (0.1);
\draw (0,1) node {$\widehat{\bbp}_\zg$};
\draw[red,thick,fill=red] (0,4) circle (0.1);
\draw (0,-.5) node {\tiny$p''$};
\draw (1+.3,0-.5) node {\tiny$q''$};
\draw (2.3,0) node {\tiny$\zg''$};
\draw (-1-.3,0-.5) node {\tiny$q''$};
\draw (0,4+.5) node {\tiny$q_1$};
\end{tikzpicture} 
\caption{When cutting the surface along a simple zigzag $\bpoint$-arc $\zg$ with two endpoints $p$ coincide, the polygon $\bbp$ of $\zD^*$ which contains $p$ becomes several polygons. The left-above picture is when $p$ is a puncture, where we get two polygons of $\zD^*_\zg$ each which contains the new $\bpoint$-point $p'$ or $p''$, see the right-above pictures. The left-below picture is when $p$ is a boundary marked point, where we get three polygons of $\zD^*_\zg$ each which contains the new $\bpoint$-point $p_1', p_2'$ or $ p''$, see the below-right pictures. } 
	\label{fig:twocases} 
\end{figure}

To sum up,  $\zD^*_\zg$ is a simple coordinate on the marked surface $(\cals_\zg,\calm_\zg)$.

\end{proof}

We have the following corollary derived from the proof of Proposition \ref{prop-def-cut-surf}.

\begin{corollary}\label{lem:twotypeofpolygons}
Each polygon $\widehat{\bbp}_\zg$ of cutting surface $(\cals_\zg,{\calm_\zg},{\zD^*_\zg})$ is induced from a polygon $\bbp$ of $(\cals,\calm,\zD^*)$.
Moreover, each oriented intersection $\widehat{\aaa}$ in $\widehat{\bbp}_\zg$ is induced from an oriented intersection $\aaa$ in $\bbp$, and this gives a bijection between the set of minimal oriented intersections of $\zD^*$ and the set of minimal oriented intersections of $\zD^*_\zg$.
\end{corollary}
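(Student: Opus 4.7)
The plan is to read both assertions directly off the polygon-classification carried out in the proof of Proposition \ref{prop-def-cut-surf}, supplemented by a careful corner-count to handle the oriented intersections.

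For the first statement, I would recall that the proof of Proposition \ref{prop-def-cut-surf} partitioned the regions of $(\cals_\zg,\calm_\zg)$ cut out by $\zD^*_\zg$ into two kinds: those without a $\bpoint$-point (which are necessarily triangles or quadrangles and collapse to a single $\rpoint$-arc under the identification step in Step four of the construction of $\zD^*_\zg$) and those containing exactly one $\bpoint$-point. Only the latter give genuine polygons of $\zD^*_\zg$, and the construction realizes each such $\widehat{\bbp}_\zg$ as a sub-region of a uniquely determined polygon $\bbp$ of $\zD^*$, possibly after splitting $\bbp$ either because $\zg$ traverses it more than once or because an endpoint of $\zg$ lies in $\bbp$. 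This yields the asserted map $\widehat{\bbp}_\zg \mapsto \bbp$.

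For the second statement, by Definition \ref{definition:oriented intersection} a minimal oriented intersection is precisely an anti-clockwise corner formed by two consecutive $\rpoint$-arcs of a polygon, so the required bijection reduces to one between corners of polygons of $\zD^*$ and corners of polygons of $\zD^*_\zg$. I would split the corners of $\widehat{\bbp}_\zg$ into two types. Corners sitting at points of $\calm_\rpoint$ inherited from $\calm$ correspond directly to the non-cut-off corners of $\bbp$: the zigzag condition forces each traversal of $\zg$ through $\bbp$ to enter and exit through a pair of consecutive arcs, so exactly one corner of $\bbp$ is ``cut off'', and that corner is absorbed into a first-kind (collapsed) polygon rather than into $\widehat{\bbp}_\zg$. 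The remaining corners of $\widehat{\bbp}_\zg$ sit at the newly inserted points $q'$ or $q''$, each formed by two smoothed segments $\ell^*_i$ and $\ell^*_{i+1}$; I would match each such corner with the cut-off corner of $\bbp$ coming from the corresponding crossing of $\zg$. Summing this matching over all pairs $(\bbp,\widehat{\bbp}_\zg)$ and, when necessary, over all sub-polygons produced by splitting $\bbp$, yields a bijection between the corners of $\zD^*$ and of $\zD^*_\zg$, hence between their sets of minimal oriented intersections.

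The main obstacle will be the bookkeeping when an endpoint of $\zg$ lies in the interior of a polygon $\bbp$ or is a puncture: as illustrated in Figure \ref{fig:twocases}, $\bbp$ then splits into several sub-polygons of $\zD^*_\zg$, and one must verify case by case (boundary endpoint vs.\ puncture, distinct endpoints vs.\ $\zg$ a loop) that the new $\bpoint$-points produced by the splitting redistribute the corners of $\bbp$ across the sub-polygons without loss or duplication. The zigzag hypothesis on $\zg$ is essential throughout, since it guarantees that each crossing of $\zg$ contributes exactly one cut-off corner on one side and exactly one new corner at $q'$ or $q''$ on the other, so that the counts balance locally.
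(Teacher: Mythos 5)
Your proposal follows essentially the same route as the paper's proof: both read the first assertion off the polygon classification in Proposition-Definition \ref{prop-def-cut-surf}, and both establish the second assertion by matching corners of $\zD^*$ that $\zg$ does not pass through with the persisting corners of $\zD^*_\zg$, and cut-off corners with the new corners at $q'$ or $q''$. The only cosmetic difference is direction: the paper constructs the forward map $\aaa\mapsto\widehat{\aaa}$ explicitly (consistent with its later use of $\widehat{\bullet}$ in Remark \ref{rem:redmap}), whereas you decompose the corners of $\zD^*_\zg$ and map them back; and you should be careful with the phrase ``corresponding crossing of $\zg$'', since several consecutive segments $\varepsilon_1,\dots,\varepsilon_t$ of $\zg$ may lie between the same pair $\ell^*_1,\ell^*_2$ but together contribute exactly one new corner at $q'$ (or $q''$), as in Figure \ref{fig:inher-zigzag}.
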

\begin{proof}
It follows from the proof of Proposition \ref{prop-def-cut-surf} that each polygon of $\zD^*_\zg$ is induced from a polygon of $\zD^*$.
Now we prove the second part. Let $\aaa: \ell_1^*\rightarrow \ell^*_2$ be a minimal oriented intersection of $\zD^*$, which corresponds to a corner in a polygon $\bbp$ of $\zD^*$. 
If $\zg$ does not pass through the corner of $\aaa$, then $\aaa$ is still an oriented intersection of the induced polygon $\widehat{\bbp}_\zg$ on the cutting surface. Otherwise, denote by $\varepsilon_1,\cdots,\varepsilon_t$ the segments of $\zg$ between $\rpoint$-arcs $\ell_1^*$ and $\ell_2^*$, see Figure \ref{fig:inher-zigzag}. By the construction of the cutting surface, we may assume that the newly added $\rpoint$-point $q'$ or $q''$ is located on any segment of $\zg$, say $\varepsilon_t$. Then $\aaa$ induces an oriented intersection $\widehat{\aaa}$ based at $q'$ or $q''$, which is depicted as the dashed lines in Figure \ref{fig:inher-zigzag}.
It is straightforward to see that this gives a bijection as expected.
\begin{figure} 
	\centering 
\begin{tikzpicture}
[>=stealth,scale=.65]				

\draw[red!20,fill=red!20] (0,0) -- (-8,2) -- (-6,-2);
\draw[red,line width=1.5pt] (-16,4) -- (0,0) -- (-12,-4);
\draw [line width=1pt] (-2.5,2) to (-2,-2);
\draw [line width=1pt] (-5,3) to (-4,-3);
\draw [line width=1pt] (-9,4) to (-5.5,-3);

\draw[thick, red ,fill=red] (0,0) circle (0.12);
\draw[thick, red ,fill=red] (-7,0) circle (0.12);

\draw[red,dashed,line width=1.5pt,bend right](-9,-3)to(-7,0);

\draw[dashed,line width=1.5pt,red] (-14,3.5) to[out=-20,in=130](-7,0) ;

\draw[very thick,bend right,->](-7.5,.44)to(-7.2,-.89);
\node [] at (-8.2,0) {$\widehat{\aaa}$};	

\draw[very thick,bend right,->](-1.1,.24)to(-1,-.3);
\node [] at (-1.5,0) {$\aaa$};	
\draw (-2,-2.5) node {$\varepsilon_1$};
\draw (-3.6,-2.5) node {$\varepsilon_2$};
\draw (-5.3,-2.5) node {$\varepsilon_t$};
\draw (-6,0) node {${q'/q''}$};
\draw (-10,2.9) node[red] {$\ell^*_1$};
\draw (-10.2,-3.9) node[red] {$\ell^*_2$};

	\end{tikzpicture}
	\caption{
 Consider an oriented intersection $\aaa$ from an $\rpoint$-arc $\ell^*_1$ to an $\rpoint$-arc $\ell^*_2$ in a polygon of $\zD^*$. The red-dashed arcs are the arcs in $\zD^*_\zg$ which are induced from $\ell^*_1$ and $\ell^*_2$. Then $\aaa$ induces an oriented intersection $\widehat{\aaa}$ in a polygon of $\zD^*_\zg$ based at the newly added $\rpoint$-point $q'/q''$.} 
	\label{fig:inher-zigzag}
\end{figure}
\end{proof}

\begin{remark}\label{rem:redmap}
We will use $\widehat{\rpoint}$ to denote the map from the set of minimal oriented intersections on $\surf$ to the set of minimal oriented intersections on $(\cals_\zg,\calm_\zg,\zD^*_\zg)$.
\end{remark}

Denote by $\zg_1$ and $\zg_2$ the $\bpoint$-arcs on $(\cals_\zg,\calm_\zg)$ which form bigons with the new boundary segments $\zg'$ and $\zg''$ respectively, see Figure \ref{fig:boundarc} for an explanation. We have the following lemma, whose proof is direct, noticing that $q'$ and $q''$ are the unique $\rpoint$-points in the bigons formed by $\zg'$ and $ \zg_1$, $\zg''$ and $\zg_2$ respectively.

\begin{lemma}\label{lemma:corresp1.}
Both $\zg_1$ and $\zg_2$ are simple zigzag $\bpoint$-arcs on $(\cals_\zg,{\calm_\zg},{\zD^*_\zg})$. 
\end{lemma}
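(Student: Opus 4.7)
The plan is to check, for $\zg_1$, the two conditions of being a simple zigzag $\bpoint$-arc with respect to $\zD^*_\zg$; the argument for $\zg_2$ is entirely symmetric after swapping $\zg', q'$ with $\zg'', q''$.

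Simplicity is immediate from the construction: $\zg_1$ bounds a bigon together with the boundary segment $\zg'$, so it has no self-intersections, and its two endpoints are the $\bpoint$-endpoints of $\zg'$, making it a simple $\bpoint$-arc. For the zigzag condition, the central observation I would use is that, by hypothesis, the interior of the bigon bounded by $\zg_1$ and $\zg'$ contains the single $\rpoint$-point $q'$ and no $\bpoint$-points of $\calm_\zg$. Consequently every arc of $\zD^*_\zg$ that $\zg_1$ crosses must be incident to $q'$. By the explicit description of $\zD^*_\zg$ in Step three of the cutting construction of Subsection~\ref{section:reduction construction}, the arcs of $\zD^*_\zg$ incident to $q'$ are precisely the arcs $\ell^*_1,\ldots,\ell^*_t$ produced on the $\zg'$-side, and $\zg_1$ crosses them in their cyclic order around $q'$.

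For any two consecutive crossings with $\ell^*_i$ and $\ell^*_{i+1}$, the segment of $\zg_1$ between them lies in a single polygon $\widehat{\bbp}_\zg$ of $\zD^*_\zg$ having $q'$ as a vertex, and at $q'$ the two consecutive edges of $\widehat{\bbp}_\zg$ are exactly $\ell^*_i$ and $\ell^*_{i+1}$. Hence $\ell^*_{i+1}$ is either the predecessor or the successor of $\ell^*_i$ in $\widehat{\bbp}_\zg$, which is the first part of Definition~\ref{definition:zigzag arcs}. If $\widehat{\bbp}_\zg$ happens to contain a puncture, the triangle cut out by the portions of $\ell^*_i$, $\ell^*_{i+1}$, and $\zg_1$ near $q'$ can be taken to lie in an arbitrarily thin collar of $\zg'$ at $q'$, which is on the new boundary of $\cals_\zg$; such a triangle cannot contain an interior puncture, so the puncture condition of Definition~\ref{definition:zigzag arcs} is automatic.

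The main subtlety I anticipate is the handling of the endpoint crossings $i=1$ and $i=t$, where $\ell^*_1$ and $\ell^*_t$ are not loops based at $q'$ but genuine arcs with one endpoint at $\wp_0$ or $\wp_t$; one must also describe cleanly the polygons of $\zD^*_\zg$ adjacent to $p_1'$ or $p_2'$ through which the first and last segments of $\zg_1$ pass. Nevertheless, the same angular-consecutiveness argument at $q'$ applies verbatim, since $\ell^*_1$ (respectively $\ell^*_t$) is still an edge at $q'$ and is the first (respectively last) arc incident to $q'$ that $\zg_1$ encounters, so no further combinatorial verification is required.
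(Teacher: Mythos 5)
Your proof is correct and takes the same approach the paper intends: it spells out the details behind the paper's one-line remark that $q'$ is the unique $\rpoint$-point in the bigon bounded by $\zg_1$ and $\zg'$. The elaboration—that $\zg_1$ is a parallel push-off of $\zg'$ crossing precisely the arcs of $\zD^*_\zg$ incident to $q'$, in cyclic order, so that consecutively crossed arcs are adjacent edges of a polygon at $q'$—is exactly what makes the zigzag condition ``direct'' in the paper's sense (and the puncture clause of Definition~\ref{definition:zigzag arcs} is handled as you say, by shrinking the corner triangle into a boundary collar).
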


Let $\za$ be an $\bpoint$-arc
on $(\cals,\calm)$. If $\za$ and $\zg$ intersect in the interior of the surface, then it disappears when cutting the surface along $\zg$. If $\za=\zg$, then it induces two $\bpoint$-arcs $\zg_1$ and $\zg_2$ as described above. Now assume that $\za\neq \zg$ and has no interior intersections with $\zg$. Then $\za$ and $\zg$ share common endpoints or completely disjoint. For both cases, $\za$ induces an (unique) $\bpoint$-arc on $(\cals_\zg,\calm_\zg)$, which is denoted by $\widehat\za$. The associated map will be denoted by $\widehat{\bullet}$.
We introduce the following four sets of arcs:
$$\mathfrak{A}:=\{\text{$\bpoint$-arc $\za$ on $(\cals,\calm,\zD^*)$ which has no interior intersections with $\zg$}\}\setminus \{\zg\};$$ 
$$\mathfrak{Z}:=\{\text{zigzag $\bpoint$-arc $\za$ on $(\cals,\calm,\zD^*)$ which has no interior intersections with $\zg$}\}\setminus \{\zg\};$$
$$\mathfrak{A}_\zg:=\{\text{$\bpoint$-arc $\widehat{\za}$ on $(\cals_\zg,\calm_\zg,\zD_\zg^*)$}\}\setminus \{\zg_1,\zg_2\};$$ 
$$\mathfrak{Z}_\zg:=\{\text{zigzag $\bpoint$-arc $\widehat{\za}$ on $(\cals_\zg,\calm_\zg,\zD_\zg^*)$}\}\setminus \{\zg_1,\zg_2\}.$$ 

\begin{figure} 
	\centering 
\begin{tikzpicture}
[>=stealth,scale=.65]				

\draw[fill=red!25] (-5.5,0) -- (-2.5,0) -- (-4,-2);
\draw[fill=red!25] (0,0) -- (-2.5,0) -- (-1,2);
\draw[fill=red!25] (0,0) -- (2.5,0) -- (1,-2);
\draw[fill=red!25] (5.5,0) -- (2.5,0) -- (4,2);
\draw [line width=2pt] (-8,0) to (8,0);
\draw[line width=1.5pt,red ] (-7.8,1.2) to (-7,2) to (-4,-2)to (-4.75,-3)to (-7.8,-3);
				\draw[line width=1.5pt,red ] (-4,1.5) to (-4,-2) to (-1,2)to (1,-2)to (4,2)to (7,-2)to (7.8,-1.2);
				\draw[line width=1.5pt,red ] (-2,-2)to (-1,2)to (0,-2) ;
				\draw[line width=1.5pt,red ] (0,2)to (1,-2)to (2,2) ;
				\draw[line width=1.5pt,red] (4,-2)to (4,2)to (6,3)to (6,3)to (7,2) ;
	\draw [line width=2pt] (-8,0) to (8,0);
				\draw[line width=1pt,fill=white] (-8,0) circle (0.13);
				\draw[line width=1pt,fill=white] (8,0) circle (0.13);
				\draw[thick, red ,fill=red] (-7,2) circle (0.12);
				\draw[thick, red ,fill=red] (-4,-2) circle (0.12);
				\draw[thick, red ,fill=red] (-4.75,-3) circle (0.12);
				\draw[thick, red ,fill=red] (-1,2) circle (0.12);
			    \draw[thick, red ,fill=red] (1,-2) circle (0.12);
			    \draw[thick, red ,fill=red] (4,2) circle (0.12);
				\draw[thick, red ,fill=red] (7,-2) circle (0.12);
				\draw[thick, red ,fill=red] (6,3) circle (0.12);
				\draw (6.5,0.6) node {$\gamma$};
	\end{tikzpicture}
\begin{tikzpicture}
[>=stealth,scale=.65]				

\draw [line width=2pt] (-8,0.6) to (8,0.6);
\draw [line width=2pt] (-8,-0.6) to (8,-0.6);
\draw [line width=1.5pt,red] (2,3) to (0.5,0.6)to (3,3); 
\draw [line width=1.5pt,red] (-7.5,2) to (-6.5,3)to(0.5,0.6) to (4.5,3)to (6.15,4) to (7.8,3.2); 
\draw [line width=1.5pt,red] (-2,3) to (0.5,0.6)to (-3.5,3); 
\draw [line width=1.5pt,red] (-2.7,-3) to (-0.5,-0.6)to (-0.5,-3); 
\draw [line width=1.5pt,red] (1.2,-3) to (-0.5,-0.6)to (4.5,-3.5);
\draw [line width=1.5pt,red] (-7.5,-3.5) to (-5,-3.5)to(-4,-2.4) to (-0.5,-0.6)to (6,-2) to (7,-1.5);
\draw [ gray!60, line width=3pt] (-7.87,0.47) to (7.87,0.47);
\draw [ gray!60, line width=2.5pt] (-7.87,-0.474) to (7.87,-0.474);
				
\draw[thick, red ,fill=red] (0.5,0.6) circle (0.15);
\draw[thick, red ,fill=red] (-0.5,-0.6) circle (0.15);
\draw[thick, red ,fill=red] (4.5,3) circle (0.13);
\draw[thick, red ,fill=red] (6.15,4) circle (0.1);
\draw[thick, red ,fill=red] (-2,3) circle (0.13);
\draw[thick, red ,fill=red] (-6.5,3) circle (0.13);\draw[thick, red ,fill=red](1.2,-3) circle (0.13);
\draw[thick, red ,fill=red](-5,-3.5) circle (0.13);
\draw[thick, red ,fill=red](-4,-2.4) circle (0.13);
\draw[thick, red ,fill=red](6,-2)  circle (0.13);

\draw[cyan,line width=1pt,black] (-8,0.6) to[out=15,in=165](8,0.6);
\draw[cyan,line width=1pt,black] (-8,-0.6) to[out=-15,in=-165](8,-0.6);

\draw (4,1) node[black] {$\zg'$};
\draw (-4,-1) node[black] {$\zg''$};
\draw (4,1.8) node[black] {$\zg_1$};
\draw (-4,-1.8) node[black] {$\zg_2$};
\draw (-.45,0) node[black] {$q''$};
\draw (.45,0) node[black] {$q'$};

\draw[line width=1pt,fill=white] (-8,0.6) circle (0.13);
\draw[line width=1pt,fill=white] (8,0.6) circle (0.13);
\draw[line width=1pt,fill=white] (-8,-0.6) circle (0.13);
\draw[line width=1pt,fill=white] (8,-0.6) circle (0.13);			
\end{tikzpicture}
	\caption{An example to show the zigzag arcs  $\zg_1$ and $\zg_2$ that form a bigon with new boundary segments $\zg'$ and $\zg''$ respectively. This is a special case when the simple zigzag $\bpoint$-arc $\zg$ factors through the $\rpoint$-arcs in $\zD^*$ at most once.} 
	\label{fig:boundarc} 
\end{figure}

\begin{lemma}\label{lemma:corresp.}
The map $\widehat{\bullet}$ establishes an one-to-one correspondence from $\mathfrak{A}$ to $\mathfrak{A}_\zg$, as well as an one-to-one correspondence from $\mathfrak{Z}$ to $\mathfrak{Z}_\zg$. Furthermore, $\widehat{\za}$ is simple if and only if $\za$ is simple.
\end{lemma}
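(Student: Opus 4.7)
The approach is to construct an explicit inverse of $\widehat{\bullet}$ via the gluing operation and then transport the zigzag and simplicity conditions through Corollary \ref{lem:twotypeofpolygons}. The surface cut is reversible: re-identifying the boundary segments $\zg'$ and $\zg''$ recovers $(\cals,\calm,\zD^*)$ from $(\cals_\zg,\calm_\zg,\zD^*_\zg)$. Therefore any $\bpoint$-arc $\widehat{\zb}\in\mathfrak{A}_\zg$ glues back to a well-defined $\bpoint$-arc $\zb$ on $(\cals,\calm)$ that, since $\widehat{\zb}$ has no interior crossings with $\zg'\cup\zg''$, has no interior crossings with $\zg$; moreover the exclusion of $\zg_1,\zg_2$ guarantees $\zb\neq\zg$. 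This defines the inverse map on $\mathfrak{A}_\zg\to\mathfrak{A}$. The delicate point is that an endpoint $p_i$ of $\zg$ may split into several $\bpoint$-points of $\calm_\zg$; but the local side of $\za$ with respect to $\zg$ near $p_i$ singles out the correct split endpoint of $\widehat{\za}$, so $\widehat{\bullet}$ is well-defined and the two maps are mutually inverse.

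For the zigzag correspondence, I would exploit the fact that the zigzag property is a purely local combinatorial condition at each crossing of $\za$ with $\zD^*$. By Corollary \ref{lem:twotypeofpolygons}, the polygons of $\zD^*_\zg$ are induced one-to-one from the polygons of $\zD^*$, and the map $\widehat{\rpoint}$ gives a bijection between the sets of minimal oriented intersections. Since $\za$ is disjoint from $\zg$ in the interior, the sequence of polygons traversed by $\widehat{\za}$ is the image under this bijection of the sequence traversed by $\za$, with the minimal oriented intersection at each crossing going through $\widehat{\rpoint}$. Consequently the defining conditions of Definition \ref{definition:zigzag arcs} (predecessor/successor in each polygon, and the puncture condition) hold for $\za$ if and only if they hold for $\widehat{\za}$, giving the bijection $\mathfrak{Z}\leftrightarrow\mathfrak{Z}_\zg$.

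Finally, simplicity transfers because cutting along $\zg$ neither merges nor separates interior points of $\za$: any interior self-intersection of $\widehat{\za}$ on $(\cals_\zg,\calm_\zg)$ would persist after gluing, and conversely every interior self-intersection of $\za$ survives the cut since $\za$ avoids $\zg$ in the interior. Hence $\widehat{\za}$ is simple if and only if $\za$ is.

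The main obstacle is the case analysis for shared endpoints of $\za$ and $\zg$, especially when $\zg$ is a loop or has a puncture as an endpoint (so that the number and types of split endpoints vary, as enumerated in Figure \ref{table:list}). In these situations one must check that the local-side assignment of $\widehat{\za}$ is unambiguous, that the inherited polygon containing the split endpoint is still the unique polygon of $\zD^*_\zg$ that $\widehat{\za}$ ends in, and that the puncture condition in the zigzag definition remains consistent; this is routine but tedious.
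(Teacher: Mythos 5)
Your argument for the bijection $\mathfrak{A}\leftrightarrow\mathfrak{A}_\zg$ via gluing, and your observation that simplicity transfers because cutting along a disjoint arc neither creates nor destroys interior self-intersections, are both correct and essentially what the paper asserts (the paper dismisses these as direct). However, your argument for the zigzag correspondence has a genuine gap.

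The problematic claim is that ``the sequence of polygons traversed by $\widehat{\za}$ is the image under this bijection of the sequence traversed by $\za$.'' This is not what Corollary \ref{lem:twotypeofpolygons} gives you, and it is not true in general. The cut $\zg$ slices each polygon $\bbp$ of $\zD^*$ into pieces, some of which (those without a $\bpoint$-point) contract to $\rpoint$-arcs of $\zD^*_\zg$ rather than remaining as polygons. If $\za$ enters $\bbp$ via the corner $\aaa$ but happens to lie on the contracting (``red'') side of the $\zg$-segments $\varepsilon_1,\dots,\varepsilon_t$, then on the cut surface the induced arc $\widehat{\za}$ merely crosses the contracted $\rpoint$-arc and does \emph{not} enter the induced polygon $\widehat{\bbp}_\zg$; that polygon is skipped from the traversal. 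Consequently the traversal sequences of $\za$ and $\widehat{\za}$ need not correspond term-by-term under the polygon bijection, and one cannot simply ``transport the zigzag condition'' along the bijection of Corollary \ref{lem:twotypeofpolygons}. The paper's proof confronts this explicitly: it splits into the two cases (the arc passes through the contracting red region, or through the remaining white region containing $\widehat{\aaa}$ at $q'/q''$), verifies zigzag-ness separately in each, and then proves the converse direction by a contradiction argument (a non-zigzag $\za$ would pass two corners of a polygon simultaneously, forcing $\widehat{\za}$ to pass two corners of the induced polygon). Your proposal does not address either the red-region case or the reverse implication, so as written it does not establish the correspondence $\mathfrak{Z}\leftrightarrow\mathfrak{Z}_\zg$.
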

\begin{proof}
It can be directly seen from the definition of the cutting surface that $\widehat{\bullet}$ establishes a one-to-one correspondence from $\mathfrak{A}$ to $\mathfrak{A}_\zg$.
For the second bijection, it only needs to show the following two claims: (1) if $\za\in\mathfrak{Z}$, then $\widehat{\za}\in\mathfrak{Z}_\zg$; (2) if $\widehat{\za}\in \mathfrak{Z}_\zg$ for some $\za$ in $\mathfrak{A}$, then $\za\in \mathfrak{Z}$.

For the first claim, we follow a zigzag arc $\za$ in $\mathfrak{Z}$ and assume that $\za$ intersects $\ell_1^*$ and $\ell_2^*$ consecutively. Denote by $\aaa$ the oriented intersection from $\ell_1^*$ to $\ell_2^*$, and denote by $\varepsilon_1,\cdots,\varepsilon_t$ the segments of $\zg$ between $\ell_1^*$ and $\ell_2^*$, see Figure \ref{fig:inher-zigzag}. 
Since there is no interior intersection between $\za$ and $\zg$, $\za$ does not intersect each segment $\varepsilon_i$, and it goes through the regions cut out by $\varepsilon_i's$. 
If $\za$ goes through the polygons formed $\ell^*_1$, $\ell^*_2$ and $\varepsilon_i's$, that is, the red part in Figure \ref{fig:inher-zigzag}, then on the cutting surface, $\widehat{\za}$ pass through an $\rpoint$-arcs in $\zD^*_\zg$, which is obtained by contracting these polygons, as explained in the proof of Proposition-Definition \ref{prop-def-cut-surf}.
Otherwise, if $\za$ goes through the rest region, that is, the white part in Figure \ref{fig:inher-zigzag}, then $\widehat{\za}$ passes through the induced polygon along the oriented intersection $\widehat{\aaa}$ based at the newly added $\rpoint$-point $q'/q''$ on $(\cals_\zg,\calm_\zg)$.
Furthermore, since by corollary \ref{lem:twotypeofpolygons}, the map from $\aaa$ to $\widehat{\aaa}$ gives a bijection between the set of minimal oriented intersections of $\zD^*$ and the set of minimal oriented intersections of $\zD^*_\zg$, therefore with respect to $\zD^*_\zg$, $\widehat{\za}$ is still a zigzag $\bpoint$-arc.

Now let $\za$ be an $\bpoint$-arc in $\mathfrak{A}$ such that $\widehat{\za}$ is zigzag with respect to $\zD^*_\zg$. Suppose that $\za$ is not zigzag with respect to $\zD^*$. Then there is a polygon $\bbp$ of $\zD^*$ such that $\za$ passes through at least two corners simultaneously. Without loss of generality, we assume that $\za$ passes through two minimal intersections $\aaa: \ell^*_1\mapsto \ell^*_2$ and $\bbb:\ell^*_2\mapsto \ell^*_3$ in $\bbp$. In particular, it enters $\bbp$ through $\ell^*_1$ and leaves $\bbp$ through $\ell^*_3$. Since there is no interior intersection between $\za$ and $\zg$, $\aaa$ and $\bbb$ belong to the same polygon $\bbp_\zg$, and the induced minimal oriented intersections $\widehat{\aaa}$ and $\widehat{\bbb}$ belong to the same induced polygon $\widehat{\bbp}_\zg$ (check the pictures in Figure \ref{figure:keeppolygon} and \ref{fig:twocases}). 
Then $\widehat{\za}$ passes $\widehat{\bbp}_\zg$ through $\widehat{\aaa}$ and $\widehat{\bbb}$ simultaneously, which contradicts the assumption that $\widehat{\za}$ is zigzag with respect to $\zD^*_\zg$. Thus $\za$ is zigzag with respect to $\zD^*$.

The last statement is clear.
\end{proof}
%
%
%
%

Note that the map $\widehat{\bullet}$ can be extended to a map on oriented intersections between $\bpoint$-arcs. Let $\za$ and $\zb$ be two $\bpoint$-arcs in $\mathfrak{A}$ intersecting at a point $p$. We have three cases:

(1) If $p$ is in the interior of the surface, then it gives rise to two oriented intersections $\mathfrak{p}$ and $\mathfrak{p}'$ between $\za$ and $\zb$, which induce two oriented intersections between $\widehat{\za}$ and $\widehat{\zb}$, where we denote them by $\widehat{\mathfrak{p}}$ and $\widehat{\mathfrak{p}}'$ respectively.

(2) If $p$ is a $\bpoint$-point which differs from the endpoints of $\zg$, then it induces a unique $\bpoint$-point on the cutting surface. In this case the oriented intersection $\mathfrak{p}$ associated with $p$ induces a unique oriented intersection $\widehat{\mathfrak{p}}$ on the cutting surface.

(3) If $p$ is a $\bpoint$-point which coincides an endpoint of $\zg$, then it induces several $\bpoint$-point on the cutting surface.
Denote by $\mathfrak{p}$ the oriented intersection between $\za$ and $\zb$ associated to $p$, and by $\widehat{p}_\za$ and $\widehat{p}_\zb$ respectively the endpoints of $\widehat{\za}$ and $\widehat{\zb}$ which induced from $p$.
If $\widehat{p}_\za$ and $\widehat{p}_\zb$ are different on the cutting surface, then $\mathfrak{p}$ disappears. Otherwise, $\mathfrak{p}$ induces an oriented intersection between $\widehat{\za}$ and $\widehat{\zb}$, which we denote by $\widehat{\mathfrak{p}}$.

We have the following lemma, whose proof is straightforward.

\begin{lemma}\label{lem:indint}
Let $\widehat{\za}$ and $\widehat{\zb}$ be two $\bpoint$-arcs in $\mathfrak{A}_\zg$. Then an oriented intersection $\widehat{\mathfrak{p}}$ from $\widehat{\za}$ to $\widehat{\zb}$ on the cutting surface $(\cals_\zg,{\calm_\zg},{\zD^*_\zg})$ is induced from a unique oriented intersection $\mathfrak{p}$ from $\za$ to $\zb$ on $(\cals,\calm,\zD^*)$.
\end{lemma}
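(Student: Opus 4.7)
The plan is to prove Lemma \ref{lem:indint} by a case analysis on where the oriented intersection $\widehat{\mathfrak{p}}$ is located on the cutting surface $(\cals_\zg,\calm_\zg,\zD^*_\zg)$, exploiting the fact that cutting along $\zg$ is a local operation: away from a tubular neighborhood of $\zg$, the surface is unchanged, and the changes near $\zg$ are explicit (two new boundary segments $\zg',\zg''$ with the new $\rpoint$-points $q',q''$, plus a prescribed splitting of the endpoints $p_1,p_2$ of $\zg$ into new $\bpoint$-points). Since these three regimes (interior, unchanged boundary/puncture, and new $\bpoint$-points) exhaust the possible locations of an oriented intersection, a uniform lifting argument in each case will yield both existence and uniqueness of the preimage $\mathfrak{p}$.

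First I would treat the interior case: if $\widehat{\mathfrak{p}}$ is an interior transverse intersection of $\widehat{\za}$ and $\widehat{\zb}$ on $\cals_\zg$, then since $\cals_\zg$ with $\zg'\cup\zg''$ removed is canonically homeomorphic to $\cals$ with $\zg$ removed, and $\widehat{\za},\widehat{\zb}$ come from $\za,\zb$ under this identification (by definition of $\mathfrak{A}_\zg$, neither is $\zg_1$ or $\zg_2$, so $\za,\zb\neq \zg$ and have no interior intersection with $\zg$), the point $\widehat{\mathfrak{p}}$ lifts to a unique interior intersection $\mathfrak{p}$ of $\za,\zb$ on $\cals$. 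The orientation of the angle is preserved since the homeomorphism preserves orientation.

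Next, if $\widehat{\mathfrak{p}}$ is based at a $\bpoint$-point of $\calm_\zg$ that lies in $\calm\setminus\{p_1,p_2\}\cup\calp_\bpoint\setminus\{p_1,p_2\}$, then by construction of $\calm_\zg$ this point is canonically identified with a unique point of $\calm$, and the local picture (the cyclic ordering of arc-ends around the marked point) is unchanged by the cutting. Hence the angular sector representing $\widehat{\mathfrak{p}}$ pulls back to a unique oriented intersection $\mathfrak{p}$ of $\za,\zb$ at the same point on $\cals$.

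The delicate case, and the main obstacle, is when $\widehat{\mathfrak{p}}$ is based at one of the new $\bpoint$-points $p_1',p_2',p_1'',p_2''$ arising from splitting an endpoint $p_i$ of $\zg$; the subtlety is magnified when $\zg$ is a loop so $p_1=p_2$ and may split into as many as four distinct new $\bpoint$-points (see Figure \ref{fig:twocases}). Here I would argue that the angle at $p_i$ on $\cals$ is divided by $\zg$ into two (or more, in the loop case) angular sectors, and the cutting construction assigns a distinct new $\bpoint$-point to each such sector. Since $\widehat{\za}$ and $\widehat{\zb}$ share the new endpoint where $\widehat{\mathfrak{p}}$ is based, both arcs approach $p_i$ on $\cals$ from the \emph{same} sector, and $\widehat{\mathfrak{p}}$ (being an angle in that sector, disjoint from $\zg$) lifts to a unique clockwise oriented intersection $\mathfrak{p}$ from $\za$ to $\zb$ at $p_i$ lying in that sector. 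Conversely any such $\mathfrak{p}$ clearly descends to $\widehat{\mathfrak{p}}$, giving uniqueness. Combining the three cases proves the lemma. A brief verification, parallel to the proof of Corollary \ref{lem:twotypeofpolygons} and Lemma \ref{lemma:corresp.}, confirms that no oriented intersection on the cutting surface is missed.
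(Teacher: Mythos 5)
Your proposal is correct, and it supplies the argument that the paper only gestures at (the paper merely states ``whose proof is straightforward'' and omits the details). The trichotomy you use---interior intersection, marked point away from $\zg$, new $\bpoint$-point arising from an endpoint of $\zg$---is exactly the case split the paper sets up in the preceding paragraph when extending $\widehat{\bullet}$ to oriented intersections, so the structure matches what the author intends. Each case is handled correctly: the interior case uses that $\cals\setminus\zg\cong\cals_\zg\setminus(\zg'\cup\zg'')$ carries $\za,\zb$ to $\widehat\za,\widehat\zb$ since neither intersects $\zg$; the second case is a purely local identification; and the third case correctly observes that $\zg$ partitions the sector at $p_i$ and that $\widehat\za,\widehat\zb$ sharing an endpoint forces $\za,\zb$ into the same sub-sector, so the unique clockwise angle disjoint from $\zg$ lifts $\widehat{\mathfrak{p}}$. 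The only small inaccuracy is cosmetic: a single endpoint $p_i$ of $\zg$ splits into at most three (not four) new $\bpoint$-points in the loop case (compare the case of a boundary-based loop in Figure \ref{fig:twocases} and Case II of the appendix); the figure ``four'' would only arise from the two distinct endpoints $p_1,p_2$ combined. This does not affect the argument, which works sector-by-sector regardless of the total count.
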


The lemma \ref{lemma:corresp.} ensures that we can inductively define a cutting surface along a set $\zG$ of zigzag simple $\bpoint$-arcs without interior intersections. More precisely, let $\zG=\{\zg^i, 0\leq i \leq t\}$ be a set of simple zigzag $\bpoint$-arcs on $(\cals,\calm,\zD^*)$, and assume that there is no interior intersection between any two arcs. For convenience, we denote $\zg^0$ by $\zg$.
By Lemma \ref{lemma:corresp.}, $\{\zg^i, 1\leq i \leq t\}$ induces a (unique) set of zigzag simple $\bpoint$-arcs on the cutting surface $(\cals_\zg,{\calm_\zg},{\zD^*_\zg})$. Then we can further define the cutting surface of $(\cals_\zg,{\calm_\zg},{\zD^*_\zg})$ along $\zg^i$, $1\leq i \leq t$, step by step. Note that the definition is independent of the choice of the order in which we cut the surface. We denote by $(\cals_\zG,{\calm_\zG},{\zD^*_\zG})$ the finally cutting surface.

\begin{lemma}\label{lem:key}
Under the notations above, the following statements hold:

(1) The arc $\zg$ is partial-tilting on $(\cals,\calm,\zD^*)$ if and only if the collection $\{\zg_1,\zg_2\}$ is partial-tilting on $(\cals_\zg,{\calm_\zg},{\zD^*_\zg})$.

(2) The collection of arcs $\zG$ is partial-tilting on $(\cals,\calm,\zD^*)$ if and only if the collection $\{\zg_1^i,\zg_2^i, 0\leq i \leq t\}$ is partial-tilting on $(\cals_\zG,{\calm_\zG},{\zD^*_\zG})$.
\end{lemma}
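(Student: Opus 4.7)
The plan is to prove (1) by transporting tilting dissections through the cut via the map $\widehat{\bullet}$, and then to deduce (2) by iterating (1) one arc at a time.

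For the forward direction of (1), suppose $\zg$ extends to a tilting dissection $\top$ on $(\cals,\calm,\zD^*)$, and set $\top' = \top \setminus \{\zg\}$. Since $\top$ is pre-tilting, no arc of $\top'$ meets $\zg$ in the interior, so every $\za \in \top'$ lies in $\mathfrak{A}$ and Lemma \ref{lemma:corresp.} assigns it a simple zigzag arc $\widehat{\za} \in \mathfrak{A}_\zg$. The candidate collection is
$$\top_\zg := \{\zg_1,\zg_2\} \cup \{\widehat{\za} : \za \in \top'\},$$
and I will check that it is a tilting dissection on $(\cals_\zg,\calm_\zg,\zD^*_\zg)$ in four steps: each arc is a simple zigzag arc (Lemmas \ref{lemma:corresp1.} and \ref{lemma:corresp.}); there are no interior intersections among arcs of $\top_\zg$; every oriented intersection at a common endpoint has weight zero; and $|\top_\zg|$ equals the rank of $(\cals_\zg,\calm_\zg)$. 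The second and third items are obtained by lifting back through Lemma \ref{lem:indint} and Corollary \ref{lem:twotypeofpolygons}: an interior intersection on the cutting surface would come from an interior intersection on the original surface, and weights are preserved since the weight of an oriented intersection is determined by the local polygon and minimal-intersection data identified by the corollary. For the fourth item, the rank of $(\cals_\zg,\calm_\zg)$ must be exactly $n+1$; this is verified from the Euler-characteristic formula $n = |\calm_\bpoint| - \chi$ by a case analysis on the endpoints of $\zg$ (two boundary $\bpoint$-points, one or two punctures, or a loop), as catalogued in the Appendix (Figure \ref{table:list}).

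The converse direction of (1) is symmetric: given a tilting dissection $\top_\zg$ on $(\cals_\zg,\calm_\zg,\zD^*_\zg)$ containing $\{\zg_1,\zg_2\}$, the other arcs all lie in $\mathfrak{A}_\zg$, and the inverse of $\widehat{\bullet}$ sends them to arcs in $\mathfrak{A}$ which together with $\zg$ form the required tilting dissection on $(\cals,\calm,\zD^*)$. For part (2) I would induct on $t$: the case $t=0$ is exactly (1), and in the inductive step, after cutting along $\zg = \zg^0$, Lemma \ref{lemma:corresp.} sends each $\zg^i$ with $1\le i\le t$ to a simple zigzag arc on $(\cals_\zg,\calm_\zg,\zD^*_\zg)$ whose pairwise interior intersections remain empty, so the inductive hypothesis applies to the cut along the remaining $t$ arcs. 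The main obstacle I anticipate is the rank verification in (1), which requires handling each topological configuration of $\zg$ separately; once it is in place, the remaining verifications are formal consequences of the correspondences already established in this subsection.
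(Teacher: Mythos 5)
Your overall strategy is sound and close in spirit to the paper's, but your list of four checks for showing that $\top_\zg$ is a tilting dissection is incomplete, and the missing item cannot be recovered from the other four. You verify that the arcs of $\top_\zg$ are simple and zigzag, that they have no pairwise interior intersections, that the weights of their endpoint intersections vanish, and that $|\top_\zg|$ equals the rank $n+1$ of $(\cals_\zg,\calm_\zg)$. You then implicitly invoke the criterion from Section 1.3 that an admissible collection with $n$ arcs is an admissible dissection. But that criterion presupposes that $\top_\zg$ is already an \emph{admissible collection}, meaning that the arcs cut $\cals_\zg$ into sub-surfaces each containing at least one point of $\calm_\rpoint\cup\calp_\bpoint$, and none of your four items establishes this. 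An $(n+1)$-element family of pairwise non-crossing simple $\bpoint$-arcs need not a priori be admissible, so the counting step does not close the argument by itself. The paper's proof avoids this entirely: it verifies directly from the cutting construction that $\top_\zg$ is an admissible dissection (noting in particular that $q'$ and $q''$ sit inside the bigons bounded by $\zg',\zg_1$ and $\zg'',\zg_2$), and never uses a rank count in this lemma; the rank formula is proved separately afterwards, in Proposition \ref{prop:rank}. To repair your version, you either need to add the admissibility check (at which point the direct construction argument is doing the real work and the rank count is superfluous) or cite an arc-complex maximality fact that is not stated in the paper.

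A smaller point: your claim that the weight-zero step follows by "lifting back through Lemma \ref{lem:indint}" does not literally cover the case where one of the two intersecting arcs is $\zg_1$ or $\zg_2$, since those arcs are excluded from $\mathfrak{A}_\zg$ by definition and hence are outside the scope of that lemma. The paper treats this as a separate Case II, lifting the oriented intersection to one from $\zg$ itself. You should make this case split explicit. The converse direction and the inductive derivation of (2) from (1) are as in the paper and are fine.
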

\begin{proof}
We only prove the first part of the statement, and then the second part can be proved inductively.
Assume that $\zg$ is partial-tilting on $(\cals,\calm,\zD^*)$ and let $\top$ be a tilting dissection containing $\zg$.
We show that $\top_\zg:=\{\widehat{\za}, \za\in \top\setminus \zg\}\cup \{\zg_1,\zg_2\}$ is a tilting dissection on $(\cals_\zg,{\calm_\zg},{\zD^*_\zg})$.
At first, by using Lemma \ref{lemma:corresp1.} and Lemma \ref{lemma:corresp.}, we see that $\top_\zg$ is a set of simple zigzag $\bpoint$-arcs on $(\cals_\zg,\calm_\zg)$, noticing that each arc in $\top\setminus \{\zg\}$ has no interior intersection with $\zg$, and thus satisfies the condition in Lemma \ref{lemma:corresp.}. 
Furthermore, $\top_\zg$ is an admissible dissection on  $(\cals_\zg,\calm_\zg)$, which can be derived directly by the construction of the cutting surface and the fact that $\top$ is an admissible dissection on $(\cals,\calm)$, noticing that in particular, the newly added $\rpoint$-points $q'/q''$ are contained in the bigons formed by $\zg', \zg_1$ and $\zg'',\zg_2$ respectively. 

Now we show that $\top_\zg$ is a tilting dissection on $(\cals_\zg,{\calm_\zg})$.
Recalling the definition of a tilting dissection given in Definition \ref{def:tilting-dissection}, we only need to show that the weight of any oriented intersection $\widehat{\mathfrak{p}}$ between any two arcs in $\top_\zg$ equals zero.
There are two cases. 

Case I: both the arcs differ from $\zg_1$ and $\zg_2$. Then they are of the form $\widehat{\za}$ and $\widehat{\zb}$, which are induced from $\bpoint$-arcs $\za$ and $\zb$ on $(\cals,{\calm})$ respectively.
Assume that $\widehat{\mathfrak{p}}$ is from $\widehat{\za}$ to $\widehat{\zb}$ which arises from a common endpoint $\widehat{p}$ of $\widehat{\za}$ and $\widehat{\zb}$.
By Lemma \ref{lem:indint}, $\widehat{p}$ is induced from a $\bpoint$-point $p$ on $(\cals,{\calm})$, and $\widehat{\mathfrak{p}}$ is induced from an oriented intersection $\mathfrak{p}$ from $\za$ to $\zb$ based at $p$.
Since $\omega(\mathfrak{p})=0$, starting from $p$, $\za$ and $\zb$ intersect the same  $\rpoint$-arc in $\zD^*$, recalling the definition of the weight given in Definition \ref{definition:weighted intersections1}.
Therefore starting from $\widehat{p}$, $\widehat{\za}$ and $\widehat{\zb}$ intersect the same $\rpoint$-arc in $\zD_\zg^*$, see the polygons $\widehat{\bbp}_\zg$ which contains $\widehat{p}$ in Figure \ref{figure:keeppolygon} and Figure \ref{fig:twocases}.
Thus $\omega(\widehat{\mathfrak{p}})=0$.


Case II: the starting arc of $\widehat{\mathfrak{p}}$ or the ending arc of $\widehat{\mathfrak{p}}$ coincides with $\zg_1$ or $\zg_2$. 
Without loss of generality, we assume that the starting arc and the ending arc of $\widehat{\mathfrak{p}}$ are $\zg_1$ and $\widehat{\za}$ respectively. Then similarly, we can lift $\widehat{\mathfrak{p}}$ to an oriented intersection $\mathfrak{p}$ from $\zg$ to $\za$. Furthermore, we have $\omega(\widehat{\mathfrak{p}})=0$ since $\omega(\widehat{\mathfrak{p}})=0$, comparing the pictures in Figure \ref{figure:keeppolygon} and Figure \ref{fig:twocases}.



For the other direction, a converse argument shows that any tilting dissection $\top_\zg$ on $(\cals_\zg,{\calm}_\zg,{\zD^*}_\zg)$ which contains $\zg_1$ and $\zg_2$ can be lifted as a tilting dissection $\top$ on $(\cals,{\calm},{\zD^*})$ by replacing each arc $\widehat{\za}$ by $\za$, and by replacing $\zg_1$ and $\zg_2$ by $\zg$.  
\end{proof}

Combining Lemma \ref{lemma:corresp.} and the proof of the above lemma, we have the following corollary.

\begin{corollary}\label{cor:key}
Let $\zG$ be a pre-tilting collection on $(\cals,\calm,\zD^*)$.
Then the map $\widehat{\bullet}$ induces a map from $\top$ to 
$\top_\zG:=\{\widehat{\za}, \za\in \top\}\cup \{\zg_1,\zg_2, \zg\in \zG\}$, which gives a one-to-one correspondence between the set of partial-tilting dissections on $(\cals,\calm,\zD^*)$ containing $\zG$ and the set of partial-tilting dissections on $(\cals_\zG,\calm_\zG,\zD_\zG^*)$ containing $\{\zg_1,\zg_2, \zg\in \zG\}$.
\end{corollary}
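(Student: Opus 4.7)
The plan is to bootstrap from the single-cut statement of Lemma \ref{lem:key} to the multi-cut case by induction on $|\zG|$. Write $\zG = \{\zg^1,\ldots,\zg^t\}$ and let $(\cals_i,\calm_i,\zD^*_i)$ denote the surface obtained from $(\cals,\calm,\zD^*)$ by successively cutting along $\zg^1,\ldots,\zg^i$. The map $\widehat{\bullet}$ at each stage is the one supplied by Lemma \ref{lemma:corresp.}, and on dissections I would let $\widehat{\bullet}$ act arc by arc, replacing each $\zg^i$ by the pair $\zg^i_1,\zg^i_2$ when it is encountered. Composing the $t$ stages produces the candidate map on dissections.

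For the forward direction, given a partial-tilting dissection $\top$ containing $\zG$, lift it to a tilting dissection $\widetilde{\top}\supseteq\top$. Because $\zg^1\in\widetilde{\top}$ and $\widetilde{\top}$ is a pre-tilting collection, no other arc of $\widetilde{\top}$ shares an interior point with $\zg^1$; Lemma \ref{lemma:corresp.} then guarantees that $\widehat{\bullet}$ is defined on the remaining arcs, and Lemma \ref{lem:key}(1) delivers a tilting dissection on $(\cals_1,\calm_1,\zD^*_1)$ which contains $\zg^1_1,\zg^1_2$ together with $\{\widehat{\za}:\za\in\widetilde{\top}\setminus\{\zg^1\}\}$. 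Since $\{\zg^2,\ldots,\zg^t\}$ sits inside this new tilting dissection, it is again a pre-tilting subcollection on the intermediate surface, and the inductive step applies. Iterating $t$ times produces a tilting dissection of $(\cals_\zG,\calm_\zG,\zD^*_\zG)$ extending $\top_\zG$, confirming that $\top_\zG$ is partial-tilting.

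The reverse direction is entirely symmetric: given a partial-tilting dissection on $(\cals_\zG,\calm_\zG,\zD^*_\zG)$ containing all of $\{\zg^i_1,\zg^i_2\}$, I would extend it to a full tilting dissection (which by hypothesis still contains all of these boundary-paired arcs) and apply the converse half of Lemma \ref{lem:key}(1) at each undoing step, re-gluing each pair $\zg^i_1,\zg^i_2$ back into $\zg^i$, and thereby recovering a tilting dissection on $(\cals,\calm,\zD^*)$ containing $\zG$ together with the preimages of the remaining arcs. Bijectivity then follows at once: the forward and backward assignments on individual arcs are inverse by Lemma \ref{lemma:corresp.}, and hence so are the induced maps on dissections.

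The main conceptual obstacle is organising the induction so that at each stage the remaining collection $\{\widehat{\zg}^{i+1},\ldots,\widehat{\zg}^t\}$ is still a pre-tilting collection of simple zigzag arcs, with no pairwise interior intersections, on the intermediate surface. This is precisely what Lemma \ref{lemma:corresp.} combined with the weight-preservation content used in the proof of Lemma \ref{lem:key} supplies, invoking the hypothesis that $\zG$ is pre-tilting to rule out interior intersections and to force all weights of oriented boundary intersections to vanish. With this propagated across the $t$ stages, the induction goes through mechanically.
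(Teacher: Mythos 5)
Your proposal is correct and follows the same route that the paper gestures at but leaves implicit: iterating the single-cut bijection established in the proof of Lemma~\ref{lem:key}, with Lemma~\ref{lemma:corresp.} guaranteeing that the remaining arcs of $\zG$ descend at each stage to simple zigzag arcs without interior intersections, so that the next cut is licit. Your extra step of lifting to a tilting dissection $\widetilde{\top}\supseteq\top$ before cutting is harmless and in fact only needed under the reading where the statement concerns partial-tilting collections rather than full tilting dissections; under either reading the induction you set up carries through, and bijectivity is correctly deduced from the arc-level correspondence of Lemma~\ref{lemma:corresp.} together with the pairing $\zg\leftrightarrow\{\zg_1,\zg_2\}$.
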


Now let's describe the algebra associated with the cutting surface. 
For convenience, denote by $A$ and $A_\zg$ the gentle algebras associated to $(\cals,\calm,\zD^*)$ and $(\cals_\zg,{\calm_\zg},{\zD^*_\zg})$ respectively, and denote by $Q=(Q_0,Q_1)$ and $\widehat{Q}=(\widehat{Q}_0,\widehat{Q}_1)$ the associated quivers.

\begin{example}\label{ex:alg-cutting}
   See Figure \ref{fig:alg-cutting} for the algebras $A$ and $A_\zg$ associated with the surface and the cutting surface given in Example \ref{ex:cutting}, c.f. Figure \ref{fig:ex-cutting}. 
\begin{figure}
   	\begin{center}
		{\begin{tikzpicture}[ar/.style={->,very thick,>=stealth}]
				\draw(-5,3)node(6){$6$}
				(-4,1.5)node(4){$4$}
				(-2,1.5)node(5){$5$}
				(-3,0)node(2){$2$}
				(-4,-1.5)node(3){$3$}
				(-2,-1.5)node(1){$1$}
				
				(2,3)node(7){$6$}
				(3,1.5)node(8){$4$}
				(5,1.5)node(9){$5$}
				(2,0)node(10){$7$}
				(4,0)node(11){$2$}
				(3,-1.5)node(12){$3$}
				(5,-1.5)node(13){$1$};

				\draw[ar](6) to  (4) ;
				\draw[ar](4) to (5);
				\draw[ar](4) to  (2);
				\draw[ar](2)to  (5);
				\draw[ar](2) to  (3);
				\draw[ar](1) to  (2);
				\draw[ar](2)to (3);
				\draw[ar](-1,0) to  (1,0);
				
				\draw[ar](7) to  (8) ;
				\draw[ar](8) to (10);
				\draw[ar](8) to  (11);
				\draw[ar](11)to  (9);
				\draw[ar](11) to  (12);
				\draw[ar](13) to  (11);
		
				\draw[bend right,dotted,thick]($(6)!.6!(4)-(0,0.1)$) to ($(4)!.25!(2)-(0,0.1)$);
				\draw[bend right,dotted,thick]($(4)!.55!(2)-(0,0.1)$) to ($(2)!.3!(3)-(0,0.05)$);
				\draw[bend left,dotted,thick]($(5)!.55!(2)-(0,0.1)$) to ($(2)!.3!(1)-(0,0.05)$);
				\draw[bend left,dotted,thick]($(7)!.6!(8)+(0,0.1)$) to ($(8)!.3!(11)+(0,0.1)$);
				\draw[bend right,dotted,thick]($(13)!.6!(11)$) to ($(11)!.4!(9)$);
				\draw[bend right,dotted,thick]($(8)!.55!(11)$) to ($(11)!.3!(12)$);
				
		\end{tikzpicture}}
	\end{center}
  \caption{
 The algebras $A$ and $A_\zg$ associated with the surface and the cutting surface appearing in Example \ref{ex:cutting}, see the Figure \ref{fig:ex-cutting}.} 
	\label{fig:alg-cutting} 
	\end{figure}
\end{example}


The following proposition shows some numerical relations of the associated quivers, where the equality $|\widehat{Q}_1|=|Q_1|$ also follows by Corollary \ref{lem:twotypeofpolygons}, noticing that the minimal oriented intersections in the associated simple coordinate give the arrows in the quiver.

\begin{proposition}\label{prop:rank}
	The rank of $(\cals_\zg,\calm_\zg)$ equals the rank of $(\cals,\calm)$ plus one. In particularly, we have $|\widehat{Q}_0|=|Q_0|+1$ and $|\widehat{Q}_1|=|Q_1|$.
\end{proposition}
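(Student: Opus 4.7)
The plan is to derive $|\widehat{Q}_0| = |Q_0| + 1$ and $|\widehat{Q}_1| = |Q_1|$ from the rank equality, and then prove the rank equality by a case-by-case topological count.

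First, the equality $|\widehat{Q}_1| = |Q_1|$ is immediate from Corollary \ref{lem:twotypeofpolygons}, which provides a bijection between the minimal oriented intersections on $(\cals,\calm,\zD^*)$ and those on $(\cals_\zg,\calm_\zg,\zD^*_\zg)$; by Definition \ref{definition:gentle algebra from dissection}, the arrows of the quivers are in bijection with these minimal oriented intersections. Since $|Q_0|$ equals the number of arcs in the simple coordinate, which is by definition the rank of the marked surface, the equality $|\widehat{Q}_0| = |Q_0| + 1$ is equivalent to the rank statement.

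For the rank equality, the plan is to apply the formula $n = |\calm_\bpoint| - \chi$ with $\chi = 2c - 2g - b - |\calp_\bpoint|$ (from the paragraph following Definition \ref{definition:addmissable dissections}) and verify that cutting along $\zg$ changes the right-hand side by exactly $+1$ in every case. The case analysis follows the enumeration in Figure \ref{table:list} of the Appendix, based on whether each endpoint $p_i$ of $\zg$ lies in $\calm_\bpoint$ or $\calp_\bpoint$ and on whether $p_1 = p_2$: (i) $p_1 \neq p_2$ both in $\calm_\bpoint$; (ii) $p_1 = p_2$ a loop at a boundary point; (iii) both endpoints punctures, distinct or coinciding; (iv) mixed, one in $\calm_\bpoint$ and one in $\calp_\bpoint$.

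The topological input in each case is a direct CW comparison between $\cals$ and $\cals_\zg$, viewing $\cals$ as the quotient of $\cals_\zg$ under the identification $\zg' \sim \zg''$ with the endpoints glued accordingly. Tracking the vertex and edge counts in each subcase yields $\chi_{\mathrm{top}}(\cals_\zg) - \chi_{\mathrm{top}}(\cals) \in \{0,1,2\}$. Combined with the explicit counts of new $\bpoint$-points induced on the cutting surface (see Figure \ref{fig:twocases} for the loop cases) and the conversion of any puncture endpoint into boundary $\bpoint$-points (so that $|\calp_\bpoint|$ decreases and $|\calm_\bpoint|$ increases correspondingly), the net change $\Delta(|\calm_\bpoint| - \chi)$ is $+1$ uniformly. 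For concreteness, in the generic case (i): four boundary vertices $\{p_1',p_1'',p_2',p_2''\}$ in $\cals_\zg$ are identified to two $\{p_1,p_2\}$ in $\cals$ and two $1$-cells $\{\zg',\zg''\}$ to one $\{\zg\}$, giving $\chi_{\mathrm{top}}(\cals_\zg) - \chi_{\mathrm{top}}(\cals) = 1$, $\Delta|\calm_\bpoint| = +2$, $\Delta|\calp_\bpoint| = 0$, whence rank change $2 - 1 = +1$; in a loop at a boundary point, $p$ splits into three $\bpoint$-points (so $\Delta|\calm_\bpoint| = +2$), $\chi_{\mathrm{top}}$ also increases by $1$, and once again the rank increases by $+1$.

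The main obstacle is merely the exhaustive enumeration of subcases, as tabulated in Figure \ref{table:list}; each individual topological count is short and routine, and the uniform answer is rank change $= +1$.
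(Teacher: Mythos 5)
Your overall plan coincides with the paper's: $|\widehat{Q}_1|=|Q_1|$ is read off from the bijection on minimal oriented intersections in Corollary \ref{lem:twotypeofpolygons}, $|\widehat{Q}_0|=|Q_0|+1$ is equivalent to the rank statement via $n=|\calm_\bpoint|-\chi$, and the rank change is verified case-by-case over the endpoint configurations of $\zg$ tabulated in Figure \ref{table:list}. The one genuine difference is how the Euler-characteristic change is obtained: the paper cites the computation for the topological quotient from \cite{APS19} and compares with the cutting surface, whereas you propose a direct CW count. Both routes are valid, and yours has the advantage of being self-contained.

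There is, however, a numerical slip to correct. The assertion $\chi_{\mathrm{top}}(\cals_\zg)-\chi_{\mathrm{top}}(\cals)\in\{0,1,2\}$ is wrong. Taking $\chi_{\mathrm{top}}$ of the underlying surface with punctures as interior $0$-cells, a direct count in Cases I--V of Figure \ref{table:list} gives $\Delta\chi_{\mathrm{top}}=+1,+1,0,-1,0$ respectively (each puncture endpoint consumed by the cut does not split into two $0$-cells, which lowers $\Delta\chi_{\mathrm{top}}$ by one relative to the generic boundary case), so $\Delta\chi_{\mathrm{top}}\in\{-1,0,1\}$. This is exactly compensated by $\Delta|\calp_\bpoint|\in\{0,-1,-2\}$, so the marked-surface quantity $\chi=2c-2g-b-|\calp_\bpoint|$ changes by exactly $+1$ in every case. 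Combined with $\Delta|\calm_\bpoint|=+2$, this gives $\Delta n=+1$ uniformly, and also leaves $|\calm_\bpoint|-2\chi$ unchanged, so the conclusion and the arrow count both survive. Your worked examples (Cases I and II) are correct; only the claimed range of $\Delta\chi_{\mathrm{top}}$ needs fixing, and the bookkeeping is cleaner if you track the marked-surface $\chi$ directly rather than $\chi_{\mathrm{top}}$ and $|\calp_\bpoint|$ separately.
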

\begin{proof}
Denote by $\chi$ the Euler character of $\cals$ and by $|\calm_\bpoint|$ the number of $\bpoint$-point on the boundary of the marked surface. Then the Euler character of the 
topological quotient of $\cals$ obtained by identifying all the points in the image of $\zg$ equals $\chi-1$, see for example the proof of \cite[Proposition 1.11\footnote{In the published version, the proof the result, that is \cite[Proposition 3.12]{APS23} is given differently.}]{APS19}. 
Note that when we define the cutting surface, instead of identifying the points in the image of $\zg$, we add two more $\rpoint$-points $q'/q''$ on the image $\zg'$ and $\zg''$ of $\zg$. Thus the Euler character of $\cals$ is the same as the Euler character of the topological quotient of $\cals$, that is, equals $\chi-1$, see the definition of the Euler character of a marked surface in subsection \ref{subsection:coord-diss}. On the other hand, the number of marked points always equals $|\calm_\bpoint|+2$, which can be proved case-by-case, seeing the pictures in Figure \ref{table:list}. 
Therefore we get the equalities in the statement, recalling from the subsection \ref{subsection:coord-diss} that the rank of the marked surface with a simple coordinate equals the number of the vertices in the associated quiver, which is $|\calm_{\bpoint}|-\chi$, and the number of the arrows in the quiver is $|\calm_{\bpoint}|-2\chi$. 
\end{proof}

In the appendix, we describe the algebra associated with the cutting surface for a special case when the $\bpoint$-arc $\zg$ intersects each $\rpoint$-arc in $\zD^*$ at most once. It seems interesting to give a general combinatorial characterization of the algebra obtained by cutting the surface.

\subsection{Almost-tilting modules are partial-tilting}\label{section:almost-tilting}
In this subsection, we consider when a pre-tilting module is partial-tilting, that is, when it can be completed as a tilting module.
Let $(\cals,\calm,\zD^*)$ be a marked surface with a simple coordinate, where the associate algebra is denoted by $A$.
Let $\zG$ be a set of simple zigzag $\bpoint$-arcs without interior intersection, and let $(\cals_\zG,{\calm_\zG},{\zD^*_\zG})$ be the cutting surface of  $(\cals,\calm,\zD^*)$ along the arcs in $\zG$, where we denote the associated algebra by $A_\zG$.

The following lemma can be directly derived from Proposition \ref{prop:tilting dissection} and Lemma \ref{lem:key}.

\begin{lemma}\label{lem:key2}
The module $\bigoplus_{\zg\in \zG}M_\zg$ is partial-tilting in $\ma$ if and only if the module $\bigoplus_{\zg\in\zG}M_{\zg_1}\oplus M_{\zg_2}$ is partial-tilting in $\ma_\zG$.
\end{lemma}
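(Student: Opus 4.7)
The plan is to simply chain together the two results cited in the lemma. First, I would invoke Proposition \ref{prop:tilting dissection}(3) to translate the module-theoretic statement into a statement about arc collections: the module $\bigoplus_{\zg\in \zG}M_\zg$ is partial-tilting in $\ma$ if and only if the corresponding $\bpoint$-arc collection $\zG$ is a partial-tilting collection on $(\cals,\calm,\zD^*)$. To apply this, I need to observe that each $M_\zg$ for $\zg\in\zG$ really does arise as the string module of a zigzag $\bpoint$-arc with endpoints in $\calm_\bpoint$; this is guaranteed by the hypothesis that $\zG$ consists of simple zigzag $\bpoint$-arcs, which is the standing assumption when one forms the cutting surface in Subsection \ref{section:reduction construction}.

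Next, I would apply the same proposition on the cutting side. By Lemma \ref{lemma:corresp1.}, each $\zg_1,\zg_2$ is a simple zigzag $\bpoint$-arc on $(\cals_\zG,\calm_\zG,\zD^*_\zG)$, so the module $\bigoplus_{\zg\in\zG}(M_{\zg_1}\oplus M_{\zg_2})$ in $\ma_\zG$ is well-defined via the surface model. Then Proposition \ref{prop:tilting dissection}(3) applied to $(\cals_\zG,\calm_\zG,\zD^*_\zG)$ and $A_\zG$ tells us that this module is partial-tilting in $\ma_\zG$ if and only if the collection $\{\zg_1^i,\zg_2^i : 0\leq i\leq t\}$ is a partial-tilting collection on the cutting surface.

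Finally, I would invoke Lemma \ref{lem:key}(2), which asserts precisely the equivalence between the two arc-level conditions: $\zG$ is partial-tilting on $(\cals,\calm,\zD^*)$ if and only if $\{\zg_1^i,\zg_2^i : 0\leq i\leq t\}$ is partial-tilting on $(\cals_\zG,\calm_\zG,\zD^*_\zG)$. Composing these three equivalences yields the statement. Since all the genuine geometric and combinatorial content (construction of the cutting surface, preservation of zigzag property, correspondence of tilting dissections) has already been absorbed into Proposition \ref{prop:tilting dissection} and Lemma \ref{lem:key}, there is no real obstacle here; the only minor subtlety worth checking is that the multiplicity-free convention for pre-tilting/partial-tilting modules is consistent on both sides, which is automatic because distinct arcs yield non-isomorphic string modules by Proposition \ref{theorem:main arcs and objects}.
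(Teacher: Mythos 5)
Your proposal is correct and follows essentially the same approach as the paper, which simply observes that the lemma "can be directly derived from Proposition \ref{prop:tilting dissection} and Lemma \ref{lem:key}." The extra sanity checks you include (that the arcs are zigzag with endpoints in $\calm_\bpoint$, and that multiplicity-freeness is preserved) are harmless and correct, but not strictly necessary given the cited results.
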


\begin{proposition}\label{prop:partialtilting}
If the algebra $A_\zG$ is representation-finite, then the module $\bigoplus_{\zg\in \zG}M_\zg$ is partial-tilting in $\ma$. In particular, this is the case when the arcs in $\zG$ cut the surface into disks.    
\end{proposition}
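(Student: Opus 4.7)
The plan is to combine Lemma \ref{lem:key2} with the classical Rickard--Schofield theorem mentioned in the introduction, which asserts that over any representation-finite algebra every pre-tilting module is partial-tilting.

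First, by Lemma \ref{lem:key2}, the module $\bigoplus_{\zg\in\zG}M_\zg$ is partial-tilting in $\ma$ if and only if $\bigoplus_{\zg\in\zG}\bigl(M_{\zg_1}\oplus M_{\zg_2}\bigr)$ is partial-tilting in $\ma_\zG$, so the whole question transfers to the cutting surface.

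Second, I would verify that $\{\zg_1,\zg_2 : \zg\in\zG\}$ is a pre-tilting collection on $(\cals_\zG,\calm_\zG,\zD_\zG^*)$, which by Proposition \ref{prop:tilting dissection} is equivalent to $\bigoplus_{\zg\in\zG}\bigl(M_{\zg_1}\oplus M_{\zg_2}\bigr)$ being pre-tilting in $\ma_\zG$. Each $\zg_i$ is a simple zigzag $\bpoint$-arc by Lemma \ref{lemma:corresp1.}; two distinct such arcs bound disjoint bigons with the new boundary segments containing $q'$ or $q''$ and therefore have no interior intersection; and every oriented boundary intersection between two such arcs has weight zero, because each $\zg_i$ hugs a boundary segment whose only $\rpoint$-point is $q'$ (or $q''$), so starting from any endpoint of $\zg_i$ the first arc of $\zD_\zG^*$ it meets is the one incident to $q'$ (resp.\ $q''$); a short case analysis parallel to the weight computation in Case II of the proof of Lemma \ref{lem:key} then forces the weight to vanish. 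I expect this weight-vanishing check to be the only subtle point, but it is dispatched by the same local analysis around $q'$ and $q''$ that already underlies Lemma \ref{lem:key}.

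Third, since $A_\zG$ is representation-finite by hypothesis, the Rickard--Schofield theorem implies that every pre-tilting module in $\ma_\zG$ is partial-tilting; in particular the module built in the previous step is partial-tilting, and Lemma \ref{lem:key2} then gives the first assertion. For the ``in particular'' claim, if the arcs of $\zG$ cut $(\cals,\calm)$ into disks, then each connected component of $(\cals_\zG,\calm_\zG)$ is a marked disk with no punctures and no higher genus, so it supports no essential closed curve. Consequently the gentle algebra $A_\zG$ admits no band modules — its connected components are all iterated tilted algebras of type $A$ — hence is representation-finite, reducing this case to the first part.
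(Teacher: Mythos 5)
Your proof follows the same route as the paper: reduce to the cutting surface via Lemma~\ref{lem:key2}, invoke the Rickard--Schofield result that every pre-tilting module over a representation-finite algebra is partial-tilting, and for the ``in particular'' claim note that a gentle algebra of a disk is iterated tilted of type $A$ and hence representation-finite. The only difference is that you make explicit the verification that $\{\zg_1,\zg_2:\zg\in\zG\}$ is a pre-tilting collection on the cutting surface (which is indeed needed before Rickard--Schofield can be applied, and for which the lift-the-intersection argument of Case~II in the proof of Lemma~\ref{lem:key} is the cleanest route), whereas the paper leaves this step implicit.
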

\begin{proof}
Note that any pre-tilting module over a representation-finite algebra is partial-tilting, see \cite{RS89}, then the first statement can be derived from Lemma \ref{lem:key2}. Since each gentle algebra arising from a disk is a tilted algebra of type $A$, thus it is representation-finite. Therefore the second statement holds.
\end{proof}
\begin{corollary}\label{cor:partialtilting}
If $\zG$ is a set of non-separating arcs with at least $1-\chi$ arcs, then the module $\bigoplus_{\zg\in \zG}M_\zg$ is partial-tilting in $\ma$.   
\end{corollary}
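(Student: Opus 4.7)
The plan is to reduce the corollary to the disk case of Proposition~\ref{prop:partialtilting} by showing that, under the hypothesis, the arcs in $\zG$ must already cut $\cals$ into disks. Once this is established, the second clause of Proposition~\ref{prop:partialtilting} immediately gives that $\bigoplus_{\zg\in\zG} M_\zg$ is partial-tilting.

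First I would compute the Euler characteristic of the cutting surface. By Proposition~\ref{prop:rank}, cutting along a single simple zigzag $\bpoint$-arc raises the rank by one and adds exactly two new $\bpoint$-points, so it raises the Euler characteristic by exactly one. Iterating along the arcs of $\zG$ (which are pairwise disjoint, so Lemma~\ref{lemma:corresp.} makes the iterated cut well-defined) yields
\[
\chi(\cals_\zG) \;=\; \chi(\cals) + |\zG| \;\geq\; \chi(\cals) + (1-\chi(\cals)) \;=\; 1.
\]
(If $\cals$ is disconnected one applies this component-by-component, partitioning $\zG$ accordingly; the argument below is then used on each component.)

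Next I would use the non-separating hypothesis to control the number of connected components. By definition a non-separating collection of arcs leaves the number of connected components unchanged, so $c(\cals_\zG)=c(\cals)$. On the other hand every connected oriented surface with non-empty boundary satisfies $\chi = 2-2g-b-|\calp_\bpoint| \leq 1$, with equality if and only if it is a disk. Summing over components gives $\chi(\cals_\zG) \leq c(\cals_\zG) = c(\cals)$. Combined with the previous paragraph (applied per component), this forces $|\zG|=1-\chi$ and forces every component of $\cals_\zG$ to be a disk. Proposition~\ref{prop:partialtilting} then concludes the proof.

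The only real obstacle is the topological bookkeeping: keeping track of whether endpoints of the arcs lie in $\calm_\bpoint$ or in $\calp_\bpoint$ (both cases contribute $+2$ marked points and $+1$ to $\chi$, as verified in the proof of Proposition~\ref{prop:rank}), and, when $\cals$ is disconnected, correctly distributing the arcs of $\zG$ and the bound $1-\chi$ among the connected components so that the Euler-characteristic count matches component by component. Once this bookkeeping is done, the corollary is a one-line application of Proposition~\ref{prop:partialtilting}.
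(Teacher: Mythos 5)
Your argument is correct and follows the paper's own (very terse) approach: both reduce the corollary to the disk case of Proposition~\ref{prop:partialtilting}, and your Euler-characteristic count is exactly the bookkeeping needed to justify the paper's unproven assertion that ``the arcs in $\zG$ cut the surface into disks.'' The one small soft spot is the parenthetical about disconnected $\cals$: simply ``partitioning $\zG$ accordingly'' does not guarantee that each connected component receives at least $1-\chi_i$ arcs, so the component-by-component reduction does not go through verbatim; this is a gap the paper itself also leaves implicit, and the statement is most naturally read with $\cals$ connected.
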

\begin{proof}
This follows from Proposition \ref{prop:partialtilting} and the fact that if the conditions in the statement hold, then the arcs in $\zG$ cut the surface into disks.  
\end{proof}
The following lemma is a fundamental step of the main theorem in this subsection.
\begin{lemma}\label{lem:keykey}
    Let $\surf$ be a connected marked surface with rank $n$. Assume that there are $n-1$ zigzag $\bpoint$-arcs $\{\zg_1,\cdots,\zg_{n-1}\}$ and $n-1$ boundary segments $\{\zg'_1,\cdots,\zg'_{n-1}\}$ such that for each $1\leq i \leq n-1$, 

    (1) $\zg_i$ and $\zg'_i$ form a bigon which contains one $\rpoint$-point;

    (2) $\bigoplus_{i=1}^{n-1}M_{\zg_i}$ is an almost-tilting module in $\ma$.

    Then $\surf$ is one of the cases depicted in Figure \ref{fig:twoposs2}. Furthermore, denote by $r$ the number of the choices of the $\bpoint$-arc $\zg_n$ such that $\bigoplus_{i=1}^{n}M_{\zg_i}$ is tilting in $\ma$, then $1\leq r\leq n+1$.
    
\begin{figure}
\begin{center}
\begin{tikzpicture}[>=stealth,scale=0.5]
\draw[line width=1.8pt,fill=white] (0,0) circle (4cm);
\path (0:4) coordinate (b1)
(45:4) coordinate (b2)			(90:4) coordinate (b3)
(135:4) coordinate (b4)
(180:4) coordinate (b5)
(225:4) coordinate (b6)
(270:4) coordinate (b7)
(315:4) coordinate (b8)
				
(22.5:4) coordinate (r1)
(67.5:4) coordinate (r2)
(112.5:4) coordinate (r3)
(157.5:4) coordinate (r4)
(202.5:4) coordinate (r5)(247.5:4) coordinate (r6)
(292.5:4) coordinate (r7)
(337.5:4) coordinate (r8);


\draw[very thick,bend left](r7)to(r8)to(r1)to(r2);
\draw[very thick,bend left](r3)to(r4)to(r5)to(r6);
\draw[very thick,bend left,red!60](b8)to(b1)to(b2);
\draw[very thick,bend left,red!60](b4)to(b5)to(b6);

\draw[thick,red,fill=red] (b1) circle (0.15cm)
	(b2) circle (.15cm)
	(b3) circle (.15cm)
	(b4) circle (.15cm)
	(b5) circle (.15cm)
	(b6) circle (.15cm)
	(b7) circle (.15cm)
	(b8) circle (.15cm);	
	
 \draw[thick,fill=white] (r1) circle (0.15cm)
	(r2) circle (0.15cm)
	(r3) circle (0.15cm)
	(r4) circle (0.15cm)
	(r5) circle (0.15cm)
	(r6) circle (0.15cm) 
	(r7) circle (0.15cm)
	(r8) circle (0.15cm);	
\node [] at (-2,-2){$\zg_i$};
\node [] at (-3.4,-3.4){$\zg'_i$};
	\end{tikzpicture}
\begin{tikzpicture}[>=stealth,scale=0.5]
\draw[line width=1.8pt,fill=white] (0,0) circle (4cm);
\path (0:4) coordinate (b1)
(45:4) coordinate (b2)			(90:4) coordinate (b3)
(135:4) coordinate (b4)
(180:4) coordinate (b5)
(225:4) coordinate (b6)
(270:4) coordinate (b7)
(315:4) coordinate (b8)
				
(22.5:4) coordinate (r1)
(67.5:4) coordinate (r2)
(112.5:4) coordinate (r3)
(157.5:4) coordinate (r4)
(202.5:4) coordinate (r5)(247.5:4) coordinate (r6)
(292.5:4) coordinate (r7)
(337.5:4) coordinate (r8);


\draw[very thick,bend left](r7)to(r8)to(r1)to(r2)to(r3)to(r4)to(r5)to(r6);
\draw[very thick,bend left,red!60](b8)to(b1)to(b2)to(b3)to(b4)to(b5)to(b6);

\draw[thick,red,fill=red] (b1) circle (0.15cm)
	(b2) circle (.15cm)
	(b3) circle (.15cm)
	(b4) circle (.15cm)
	(b5) circle (.15cm)
	(b6) circle (.15cm)
	(b7) circle (.15cm)
	(b8) circle (.15cm);	
	
\draw[thick,fill=white] (r1) circle (0.15cm)
    (0,0) circle (0.15cm)
	(r2) circle (0.15cm)
	(r3) circle (0.15cm)
	(r4) circle (0.15cm)
	(r5) circle (0.15cm)
	(r6) circle (0.15cm) 
	(r7) circle (0.15cm)
	(r8) circle (0.15cm);	
\node [] at (-2,-2){$\zg_i$};
\node [] at (-3.4,-3.4){$\zg'_i$};
\draw[] (4.8,0) {};
	\end{tikzpicture}
\begin{tikzpicture}[>=stealth,scale=0.5]
\draw[line width=1.8pt,fill=white] (0,0) circle (4cm);
\draw[line width=1.8pt,fill=gray!20] (0,0) circle (1.3cm);
\path (0:4) coordinate (b1)
(45:4) coordinate (b2)			(90:4) coordinate (b3)
(135:4) coordinate (b4)
(180:4) coordinate (b5)
(225:4) coordinate (b6)
(270:4) coordinate (b7)
(315:4) coordinate (b8)
				
(22.5:4) coordinate (r1)
(67.5:4) coordinate (r2)
(112.5:4) coordinate (r3)
(157.5:4) coordinate (r4)
(202.5:4) coordinate (r5)(247.5:4) coordinate (r6)
(292.5:4) coordinate (r7)
(337.5:4) coordinate (r8);

\path (0:1.3) coordinate (bb1)
(90:1.3) coordinate (bb2)		(180:1.3) coordinate (bb3)
(270:1.3) coordinate (bb4)
(45:1.3) coordinate (rr1)(135:1.3) coordinate (rr2)(225:1.3) coordinate (rr3)(315:1.3) coordinate (rr4);


\draw[very thick,bend left](r7)to(r8)to(r1)to(r2)to(r3)to(r4)to(r5)to(r6);

\draw[very thick,bend left,red!60](b8)to(b1)to(b2)to(b3)to(b4)to(b5)to(b6);

\draw[very thick]plot [smooth,tension=1] coordinates {(rr1)(0,1.7) (rr2)};
\draw[very thick]plot [smooth,tension=1] coordinates {(rr2)(-1.7,0) (rr3)};
\draw[very thick]plot [smooth,tension=1] coordinates { (rr3)(0,-1.7) (rr4)};
\draw[very thick]plot [smooth,tension=1] coordinates {(rr4)(1.7,0)(rr1)};
\draw[very thick,red!60]plot [smooth,tension=1] coordinates {(bb2)(1.4,1.4)(bb1)};
\draw[very thick,red!60]plot [smooth,tension=1] coordinates {(bb1)(1.4,-1.4)(bb4)};
\draw[very thick,red!60]plot [smooth,tension=1] coordinates {(bb4)(-1.4,-1.4)(bb3)};
\draw[very thick,red!60]plot [smooth,tension=1] coordinates {(bb3)(-1.4,1.4)(bb2)};
\draw[thick,red,fill=red] (b1) circle (0.15cm)
	(b2) circle (.15cm)
	(b3) circle (.15cm)
	(b4) circle (.15cm)
	(b5) circle (.15cm)
	(b6) circle (.15cm)
	(b7) circle (.15cm)
	(b8) circle (.15cm)
 (bb1) circle (.15cm)
 (bb2) circle (.15cm)
 (bb3) circle (.15cm)
 (bb4) circle (.15cm);	
	
 \draw[thick,fill=white] (r1) circle (0.15cm)
	(r2) circle (0.15cm)
	(r3) circle (0.15cm)
	(r4) circle (0.15cm)
	(r5) circle (0.15cm)
	(r6) circle (0.15cm) 
	(r7) circle (0.15cm)
	(r8) circle (0.15cm)
 (rr1) circle (.15cm)
 (rr2) circle (.15cm)
 (rr3) circle (.15cm)
 (rr4) circle (.15cm);	
\node [] at (-2,-2){$\zg_i$};
\node [] at (-3.4,-3.4){$\zg'_i$};
  \draw[] (4.8,0) {};
\end{tikzpicture}
		\end{center}
		\begin{center}
			\caption{
   The three kinds of surfaces $\surf$ satisfies conditions in Lemma \ref{lem:keykey}, where the arcs in $\zD^*$ can be partially determined by using the condition that $\bigoplus_{i=1}^{n-1}M_{\zg_i}$ is pre-tilting in $\ma$, see the $\rpoint$-arcs in the pictures.}\label{fig:twoposs2}
		\end{center}
	\end{figure}
    
\begin{figure}
\begin{center}
\begin{tikzpicture}[>=stealth,scale=.5]
\draw[bend left,line width=1.5pt] (1,0)to(3.5,1.3)to(3,4)to(1.5,6);
\draw[bend left,line width=1.5pt] (-1.5,6)to(-3,4)to(-3.5,1.3)to(-1,0);
\draw [ gray!60, line width=4pt] (-2,-.07)--(2,-.07);
\draw[line width=2pt] (-2,0)--(2,0);	
\draw [ gray!60, line width=4pt] (-2,6.07)--(2,6.07);
\draw[line width=2pt] (-2,6)--(2,6);	
    \draw[red,thick,fill=red] (0,6) circle (0.1);
    \draw[red,thick,fill=red] (0,0) circle (0.1);
\draw[thick,fill=white] (1,0) circle (0.1);
\draw[thick,fill=white] (3.5,1.3) circle (0.1);
\draw[thick,fill=white] (3,4) circle (0.1);
\draw[thick,fill=white] (1.5,6) circle (0.1);
\draw[thick,fill=white] (-1,0) circle (0.1);
\draw[thick,fill=white] (-3.5,1.3) circle (0.1);
\draw[thick,fill=white] (-3,4) circle (0.1);
\draw[thick,fill=white] (-1.5,6) circle (0.1);
\node [] at (-2.3,2.5){$\zg_i$};
	\node at (5.5,0) {};
\end{tikzpicture}
\begin{tikzpicture}[>=stealth,scale=.5]
\draw[bend left,line width=1.5pt] (1,0)to(3.5,1.3)to(3,4)to(1.5,6)to(-1.5,6)to(-3,4)to(-3.5,1.3)to(-1,0);
\draw [ gray!60, line width=4pt] (-2,-.07)--(2,-.07);
\draw[line width=2pt] (-2,0)--(2,0);		
    \draw[red,thick,fill=red] (0,0) circle (0.1);
\draw[thick,fill=white] (0,3) circle (0.1);\draw[thick,fill=white] (1,0) circle (0.1);
\draw[thick,fill=white] (3.5,1.3) circle (0.1);
\draw[thick,fill=white] (3,4) circle (0.1);
\draw[thick,fill=white] (1.5,6) circle (0.1);
\draw[thick,fill=white] (-1,0) circle (0.1);
\draw[thick,fill=white] (-3.5,1.3) circle (0.1);
\draw[thick,fill=white] (-3,4) circle (0.1);
\draw[thick,fill=white] (-1.5,6) circle (0.1);
\node [] at (-2.3,2.5){$\zg_i$};
	\node at (5.5,0) {};
\end{tikzpicture}
\begin{tikzpicture}[>=stealth,scale=.5]
\draw[bend left,line width=1.5pt] (1,0)to(3.5,1.3)to(3,4)to(1.5,6)to(-1.5,6)to(-3,4)to(-3.5,1.3)to(-1,0);
\draw[bend left,line width=1.5pt,fill=gray!20] (0,2)to(1.7,3)to(0,4)to(-1.7,3)to(0,2);
\draw [ gray!60, line width=4pt] (-2,-.07)--(2,-.07);
\draw[line width=2pt] (-2,0)--(2,0);		
    \draw[red,thick,fill=red] (0,0) circle (0.1);
\draw[thick,fill=white] (-1.7,3) circle (0.1);
\draw[thick,fill=white] (0,4) circle (0.1);
\draw[thick,fill=white] (1.7,3) circle (0.1);
\draw[thick,fill=white] (0,2) circle (0.1);\draw[thick,fill=white] (1,0) circle (0.1);
\draw[thick,fill=white] (3.5,1.3) circle (0.1);
\draw[thick,fill=white] (3,4) circle (0.1);
\draw[thick,fill=white] (1.5,6) circle (0.1);
\draw[thick,fill=white] (-1,0) circle (0.1);
\draw[thick,fill=white] (-3.5,1.3) circle (0.1);
\draw[thick,fill=white] (-3,4) circle (0.1);
\draw[thick,fill=white] (-1.5,6) circle (0.1);
\node [] at (-2.3,2.5){$\zg_i$};
\end{tikzpicture}
		\end{center}
		\begin{center}
			\caption{
  Three possibilities of the sub-surface formed by $\bpoint$-arcs $\zg_i$ and boundary segments, whose rank is one, that is, it only needs one more $\bpoint$-arc to cut it into polygons each which contains exactly one $\rpoint$-point.}\label{fig:twoposs}
		\end{center}
	\end{figure}

\begin{figure}
\begin{center}
\begin{tikzpicture}[>=stealth,scale=.7]
\draw[line width=1.8pt,fill=white] (0,0) circle (4cm);
\path (0:4) coordinate (b1)
(45:4) coordinate (b2)			(90:4) coordinate (b3)
(135:4) coordinate (b4)
(180:4) coordinate (b5)
(225:4) coordinate (b6)
(270:4) coordinate (b7)
(315:4) coordinate (b8)
				
(22.5:4) coordinate (r1)
(67.5:4) coordinate (r2)
(112.5:4) coordinate (r3)
(157.5:4) coordinate (r4)
(202.5:4) coordinate (r5)(247.5:4) coordinate (r6)
(292.5:4) coordinate (r7)
(337.5:4) coordinate (r8);


\draw[very thick,bend left](r7)to(r8)to(r1)to(r2);
\node [] at (2,-2.1){$\zg_1$};
\node [] at (2.9,0){$\zg_2$};
\node [] at (-1.8,-2){$\zg_{n-1}$};

\draw[very thick,bend left](r3)to(r4)to(r5)to(r6);
\draw[very thick,bend left,red](b8)to(b1)to(b2);
\draw[very thick,bend left,red](b4)to(b5)to(b6);
\node [red] at (2.5,-1.1){$\ell^*_{1,2}$};
\node [red] at (2.6,1.1){$\ell^*_{2,3}$};

\draw[thick,red,fill=red] (b1) circle (0.07cm)
	(b2) circle (.07cm)
	(b3) circle (.07cm)node[above]{$q_m$}
	(b4) circle (.07cm)
	(b5) circle (.07cm)
	(b6) circle (.07cm)node[below left]{$q_{n}$}
	(b7) circle (.07cm)node[below]{$q_{n+1}$}
	(b8) circle (.07cm)node[below right]{$q_1$};	
	
 \draw[thick,fill=white] (r1) circle (0.07cm)
	(r2) circle (0.07cm)node[above right]{$p_m$} 
	(r3) circle (0.07cm)
	(r4) circle (0.07cm)
	(r5) circle (0.07cm)
	(r6) circle (0.07cm)node[below left]{$p_{n+1}$} 
	(r7) circle (0.07cm)node[below right]{$p_1$}
	(r8) circle (0.07cm)node[below right]{$p_2$};	
	\end{tikzpicture}
		\end{center}
		\begin{center}
			\caption{
We label the $\rpoint$-points $q_i$, the $\bpoint$-points $p_i$, the $\bpoint$-arc $\zg_i$ and the $\rpoint$-arcs $\ell^*_{ij}$ for the case of a disk appearing in Lemma \ref{lem:keykey}. }\label{fig:twoposs3}
		\end{center}
	\end{figure}

 \begin{figure}
\begin{center}
\begin{tikzpicture}[>=stealth,scale=.7]
\draw[line width=1.8pt,fill=white] (0,0) circle (4cm);
\path (0:4) coordinate (b1)
(45:4) coordinate (b2)			(90:4) coordinate (b3)
(135:4) coordinate (b4)
(180:4) coordinate (b5)
(225:4) coordinate (b6)
(270:4) coordinate (b7)
(315:4) coordinate (b8)
				
(22.5:4) coordinate (r1)
(67.5:4) coordinate (r2)
(112.5:4) coordinate (r3)
(157.5:4) coordinate (r4)
(202.5:4) coordinate (r5)(247.5:4) coordinate (r6)
(292.5:4) coordinate (r7)
(337.5:4) coordinate (r8);


\draw[very thick,bend left](r7)to(r8)to(r1)to(r2);

\draw[dashed,line width=1.2pt,red](b3)to(b7);
\draw[dashed,line width=1.2pt,red,bend right](b1)to(b7);
\draw[dashed,line width=1.2pt,red,bend left](b5)to(b7);
\draw[dashed,line width=1.2pt,bend left](r5)to(r8);

\draw[very thick,bend left](r3)to(r4)to(r5)to(r6);
\draw[very thick,bend left,red](b8)to(b1)to(b2);
\draw[very thick,bend left,red](b4)to(b5)to(b6);

\draw[thick,red,fill=red] (b1) circle (0.07cm)
	(b2) circle (.07cm)
	(b3) circle (.07cm)node[above]{$q_m$}
	(b4) circle (.07cm)
	(b5) circle (.07cm)
	(b6) circle (.07cm)node[below left]{$q_{n}$}
	(b7) circle (.07cm)node[below]{$q_{n+1}$}
	(b8) circle (.07cm)node[below right]{$q_1$};	
	
 \draw[thick,fill=white] (r1) circle (0.07cm)
	(r2) circle (0.07cm)node[above right]{$p_m$} 
	(r3) circle (0.07cm)
	(r4) circle (0.07cm)
	(r5) circle (0.07cm)
	(r6) circle (0.07cm)node[below left]{$p_{n+1}$} 
	(r7) circle (0.07cm)node[below right]{$p_1$}
	(r8) circle (0.07cm)node[below right]{$p_2$};	
\node [] at (7,0){};
	\end{tikzpicture}
\begin{tikzpicture}[>=stealth,scale=.7]
\draw[line width=1.8pt,fill=white] (0,0) circle (4cm);
\path (0:4) coordinate (b1)
(45:4) coordinate (b2)			(90:4) coordinate (b3)
(135:4) coordinate (b4)
(180:4) coordinate (b5)
(225:4) coordinate (b6)
(270:4) coordinate (b7)
(315:4) coordinate (b8)
				
(22.5:4) coordinate (r1)
(67.5:4) coordinate (r2)
(112.5:4) coordinate (r3)
(157.5:4) coordinate (r4)
(202.5:4) coordinate (r5)(247.5:4) coordinate (r6)
(292.5:4) coordinate (r7)
(337.5:4) coordinate (r8);


\draw[very thick,bend left](r7)to(r8)to(r1)to(r2);

\draw[dashed,line width=1.2pt,red](b3)to(b7);
\draw[dashed,line width=1.2pt,red,bend left](b2)to(b3);
\draw[dashed,line width=1.2pt,red,bend left](b6)to(b7);
\draw[dashed,line width=1.2pt](r2)to(r6);

\draw[very thick,bend left](r3)to(r4)to(r5)to(r6);
\draw[very thick,bend left,red](b8)to(b1)to(b2);
\draw[very thick,bend left,red](b4)to(b5)to(b6);

\draw[thick,red,fill=red] (b1) circle (0.07cm)
	(b2) circle (.07cm)
	(b3) circle (.07cm)node[above]{$q_m$}
	(b4) circle (.07cm)
	(b5) circle (.07cm)
	(b6) circle (.07cm)node[below left]{$q_{n}$}
	(b7) circle (.07cm)node[below]{$q_{n+1}$}
	(b8) circle (.07cm)node[below right]{$q_1$};	
	
 \draw[thick,fill=white] (r1) circle (0.07cm)
	(r2) circle (0.07cm)node[above right]{$p_m$} 
	(r3) circle (0.07cm)
	(r4) circle (0.07cm)
	(r5) circle (0.07cm)
	(r6) circle (0.07cm)node[below left]{$p_{n+1}$} 
	(r7) circle (0.07cm)node[below right]{$p_1$}
	(r8) circle (0.07cm)node[below right]{$p_2$};	
\end{tikzpicture}
\begin{tikzpicture}[>=stealth,scale=.7]
\draw[line width=1.8pt,fill=white] (0,0) circle (4cm);
\path (0:4) coordinate (b1)
(45:4) coordinate (b2)			(90:4) coordinate (b3)
(135:4) coordinate (b4)
(180:4) coordinate (b5)
(225:4) coordinate (b6)
(270:4) coordinate (b7)
(315:4) coordinate (b8)
				
(22.5:4) coordinate (r1)
(67.5:4) coordinate (r2)
(112.5:4) coordinate (r3)
(157.5:4) coordinate (r4)
(202.5:4) coordinate (r5)(247.5:4) coordinate (r6)
(292.5:4) coordinate (r7)
(337.5:4) coordinate (r8);


\draw[very thick,bend left](r7)to(r8)to(r1)to(r2);

\draw[dashed,line width=1.2pt,red,bend left](b2)to(b3);
\draw[dashed,line width=1.2pt,red,bend left](b7)to(b8);
\draw[dashed,line width=1.2pt,red,bend left](b6)to(b7);
\draw[dashed,bend right,line width=1.2pt](r7)to(r6);

\draw[very thick,bend left](r3)to(r4)to(r5)to(r6);
\draw[very thick,bend left,red](b8)to(b1)to(b2);
\draw[very thick,bend left,red](b4)to(b5)to(b6);

\draw[thick,red,fill=red] (b1) circle (0.07cm)
	(b2) circle (.07cm)
	(b3) circle (.07cm)node[above]{$q_m$}
	(b4) circle (.07cm)
	(b5) circle (.07cm)
	(b6) circle (.07cm)node[below left]{$q_{n}$}
	(b7) circle (.07cm)node[below]{$q_{n+1}$}
	(b8) circle (.07cm)node[below right]{$q_1$};	
	
 \draw[thick,fill=white] (r1) circle (0.07cm)
	(r2) circle (0.07cm)node[above right]{$p_m$} 
	(r3) circle (0.07cm)
	(r4) circle (0.07cm)
	(r5) circle (0.07cm)
	(r6) circle (0.07cm)node[below left]{$p_{n+1}$} 
	(r7) circle (0.07cm)node[below right]{$p_1$}
	(r8) circle (0.07cm)node[below right]{$p_2$};	
\node [] at (6,0){};
	\end{tikzpicture}
\begin{tikzpicture}
[>=stealth,scale=.7]
\draw[line width=1.8pt,fill=white] (0,0) circle (4cm);
\path (0:4) coordinate (b1)
(45:4) coordinate (b2)			(90:4) coordinate (b3)
(135:4) coordinate (b4)
(180:4) coordinate (b5)
(225:4) coordinate (b6)
(270:4) coordinate (b7)
(315:4) coordinate (b8)
				
(22.5:4) coordinate (r1)
(67.5:4) coordinate (r2)
(112.5:4) coordinate (r3)
(157.5:4) coordinate (r4)
(202.5:4) coordinate (r5)(247.5:4) coordinate (r6)
(292.5:4) coordinate (r7)
(337.5:4) coordinate (r8);


\draw[very thick,bend left](r7)to(r8)to(r1)to(r2)to(r3)to(r4)to(r5);
\draw[very thick,bend left,dashed](r3)to(r6);
\draw[very thick,bend left,dashed](r4)to(r6);
\draw[very thick,bend left,dashed](r5)to(r6);
\draw[very thick,dashed,bend left](r2)to(r6);
\draw[dashed,very thick,bend left,red](b5)to(b6);
\draw[very thick,bend left,red](b8)to(b1)to(b2)to(b3)to(b4)to(b5);
\draw[line width=1.2pt,dashed,red](b3)to(b7);
\node [red] at (.9,-2.7){$\ell^*_{j,n+1}$};
\draw[thick,red,fill=red] (b1) circle (0.07cm)
	(b2) circle (.07cm)
	(b3) circle (.07cm)node[above]{$q_j$}
	(b4) circle (.07cm)
	(b5) circle (.07cm)
	(b6) circle (.07cm)node[below left]{$q_{n}=q_m$}
	(b7) circle (.07cm)node[below]{$q_{n+1}$}
	(b8) circle (.07cm)node[below right]{$q_1$};	
	
 \draw[thick,fill=white] (r1) circle (0.07cm)
	(r2) circle (0.07cm)
	(r3) circle (0.07cm)
	(r4) circle (0.07cm)
	(r5) circle (0.07cm)
	(r6) circle (0.07cm)node[below left]{$p_{n+1}$} 
	(r7) circle (0.07cm)node[below right]{$p_1$}
	(r8) circle (0.07cm)node[below right]{$p_2$};	
	\end{tikzpicture}
		\end{center}
		\begin{center}
			\caption{
By adding the dashed $\rpoint$-arcs, we complete the $\rpoint$-arcs in the disk of Figure \ref{fig:twoposs3} to get the simple coordinate $\zD^*$.
If $m\neq n$ and $n\geq 5$, then we need three more $\rpoint$-arcs. Depending on these arcs' positions, there are several cases, where we list three representatives, see the first three pictures. For these cases, there is only one choice of $\bpoint$-arc $\zg_n$ such that $\bigoplus_{i=1}^{n}M_{\zg_i}$ is a tilting module in $\ma$, where $\zg_n$ is the dashed $\bpoint$-arc.
When $m=n$, there are two undetermined $\rpoint$-arcs, where at least one of them has $q_{n+1}$ as an endpoint, see $\ell^*_{j,n+1}\in \zD^*$ in the last picture. If we choose $\ell^*_{n-1,n}$ as the other $\rpoint$-arc, then there are $r=n-j+1$ choices of $\bpoint$-arc $\zg_n$ such that $\bigoplus_{i=1}^{n}M_{\zg_i}$ is a tilting module in $\ma$, see the dashed $\bpoint$-arcs.
}\label{fig:twoposs4}
		\end{center}
	\end{figure}

\end{lemma}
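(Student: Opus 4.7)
The plan is to first use a rank-reduction argument to constrain the topology of $\surf$, and then perform a case-by-case enumeration to count the completions $r$.

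Since $\bigoplus_{i=1}^{n-1} M_{\zg_i}$ is pre-tilting, the arcs $\zg_i$ are pairwise non-intersecting by Proposition \ref{prop:tilting dissection}, and because each bounds a bigon with a boundary segment containing one $\rpoint$-point, they carve out $n-1$ disjoint ears from $\cals$. Let $\cals^\circ$ be the resulting \emph{core} sub-surface; its boundary consists of portions of $\partial\cals$ interleaved with the arcs $\zg_i$ (now boundary arcs without $\rpoint$-points between their $\bpoint$-endpoints). I would then form a marked surface $\widetilde{\cals}$ from $\cals^\circ$ by contracting each $\zg_i$ to a point, identifying its two $\bpoint$-endpoints. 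This restores the alternation of colours on the boundary, and a direct computation using the fact that removing ears and contracting arcs both preserve $\chi$ yields $|\calm_{\bpoint}|^{\widetilde{\cals}} = |\calm_{\bpoint}| - (n-1)$ and $|\calm_{\rpoint}|^{\widetilde{\cals}} = |\calm_{\rpoint}| - (n-1)$, so the rank of $\widetilde{\cals}$ equals $n - (n-1) = 1$.

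By Lemma \ref{lamma:rankonesur}, $\widetilde{\cals}$ is either a disk with two boundary $\bpoint$-points or a once-punctured disk with one boundary $\bpoint$-point. I would then reverse the contraction, re-expanding each collapsed vertex back into an arc $\zg_i$ and re-attaching the ear $B_i$, and carefully track how the topology of $\cals$ is recovered. In the disk case for $\widetilde{\cals}$, $\cals$ is a disk; in the once-punctured-disk case, the interior puncture of $\widetilde{\cals}$ either descends from a genuine puncture of $\cals$ (giving the once-punctured disk) or arises from an inner boundary component of $\cals$ that was entirely collapsed by the contractions (giving the annulus). These are exactly the three possibilities of Figure \ref{fig:twoposs2}.

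For the bounds on $r$, I would use the zigzag and weight-zero conditions on the $\zg_i$'s to partially determine $\zD^*$: the $n-1$ $\rpoint$-arcs $\ell^*_{i,i+1}$ connecting consecutive $\rpoint$-points on the boundary in the neighbourhood of each ear are forced, as illustrated in Figure \ref{fig:twoposs3}. For each of the three topological cases, I would then enumerate the remaining degrees of freedom in $\zD^*$ and, for each such choice, list the zigzag $\bpoint$-arcs $\zg_n$ whose oriented intersections with all $\zg_i$ have weight zero. This yields $r$ explicitly, and the representative sub-cases shown in Figure \ref{fig:twoposs4} already exhibit both the minimum $r=1$ and the maximum $r=n+1$. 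The main obstacle I anticipate is the enumeration ensuring $r \geq 1$ uniformly across all configurations, since this is precisely the partial-tilting conclusion that the whole section aims at: one must exhibit an explicit zigzag arc $\zg_n$ completing the collection in each sub-case of the disk, once-punctured-disk, and annulus scenarios, while the upper bound $r \leq n+1$ follows by observing that any valid $\zg_n$ is pinned down by its two endpoints among at most $n+1$ boundary $\bpoint$-points in the core.
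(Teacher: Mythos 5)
Your reduction to a rank-one marked surface via contraction is essentially the paper's argument in a slightly cleaner packaging: the paper directly views the exceptional sub-surface cut out by the $\zg_i$'s as a rank-one surface (declaring informally that the boundary circle made entirely of $\bpoint$-arcs is to be read as a $\bpoint$-puncture), whereas you realize the same normalization by literally collapsing each $\zg_i$ to a $\bpoint$-point. Both routes land on Lemma~\ref{lamma:rankonesur} and yield the disk / once-punctured-disk / annulus trichotomy; your version makes the bookkeeping of $\chi$ and $|\calm_\bpoint|$ a little more transparent, while the paper's version avoids discussing what the contraction does when it collapses an entire boundary circle.

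The genuine gap is in the second half. You acknowledge that ``one must exhibit an explicit zigzag arc $\zg_n$'' to secure $r\geq 1$, and this is indeed the substantive content; but you do not execute the enumeration, which in the paper occupies Figures~\ref{fig:twoposs3}--\ref{fig:upbound} and is where the actual proof lives. More seriously, your closing justification of $r\leq n+1$ is not sound as stated. The $\bpoint$-arc $\zg_n$ has two endpoints chosen among the $n+1$ boundary $\bpoint$-points of the core, which gives $\binom{n+1}{2}$ pairs, not $n+1$; and in the once-punctured-disk and annulus cases a pair of endpoints does not determine a simple arc up to homotopy, so ``pinned down by its two endpoints'' fails. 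The true reason the count is small is that the weight-zero condition against the fixed $\zg_i$'s forces the behaviour of $\zg_n$ near each endpoint (in particular, in the paper's disk analysis one endpoint is effectively forced to be $p_{n+1}$, so the freedom is at most linear in $n$); obtaining $1\leq r\leq n+1$ therefore requires running the case distinctions of Figures~\ref{fig:twoposs4} and \ref{fig:twoposs5}, with the exceptional annulus of Figure~\ref{fig:upbound} realizing $r=n+1$. As written, your proposal establishes the topological trichotomy but leaves both inequalities on $r$ unproved.
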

\begin{proof}
The case for $n=1$ is trivial, so we assume $n\geq 2$ in the following.
Since $\bigoplus_{i=1}^{n-1}M_{\zg_i}$ is a pre-tilting module, by Proposition \ref{prop:tilting dissection}, $\zG:=\{\zg_1,\cdots,\zg_{n-1}\}$ is an admissible collection, which can be completed as an admissible dissection by adding exactly one more $\bpoint$-arc.
Namely, the sub-surfaces of $(\cals,\calm)$ formed by arcs in $\zG$ and boundary segments are polygons each containing exactly one $\rpoint$-point except for one sub-surface. 
Furthermore, since it is needed one more $\bpoint$-arc to cut the exceptional sub-surface into polygons each of which has exactly one $\rpoint$-point, the rank of the sub-surface is one.
Thus by Lemma \ref{lamma:rankonesur}, there are three possibilities for this special sub-surface: a disk, a once-punctured disk, and an annulus, see Figure \ref{fig:twoposs}. Note that the annulus of Figure \ref{fig:twoposs} is homotopic to a once-punctured disk, if we view the boundary formed by $\bpoint$-arcs as a $\bpoint$-puncture. 

Since each $\bpoint$-arc $\zg_i$ and the boundary segment $\zg'_i$ form a bigon on $(\cals,\calm,\zD^*)$ which has exactly one $\rpoint$-point, the shape of the surface $(\cals,\calm)$ must be of the form depicted in Figure \ref{fig:twoposs2}.
At last, since the module $\bigoplus_{i=1}^{n-1}M_{\zg_i}$ is pre-tilting, the weights of the intersections between two arcs in $\{\zg_1,\cdots,\zg_{n-1}\}$ are always zero, equivalently, when starting from a common endpoint, two $\bpoint$-arcs $\zg_i$ intersect the same $\rpoint$-arc immediately. Therefore part of $\rpoint$-arcs in $\zD^*$ are determined, see the $\rpoint$-arcs in Figure \ref{fig:twoposs2}.

We consider the case of the disk, where we label the $\rpoint$-points by $q_i$ anti-clockwisely, and label the $\bpoint$-point in between $q_{i-1}$ and $q_i$ by $p_i$, see Figure \ref{fig:twoposs3}, where the indices are considered up to $n+1$.
Note that there are exactly two $\rpoint$-points that are not in the bigon formed by $\zg_i$ and $\zg'_i$.
Without loss of generality, 
we assume they are $q_m$ and $q_{n+1}$, where $2\leq m \leq n$. The $\rpoint$-arc with endpoints $q_i$ and $q_j$ will be denote by $\ell^*_{i,j}$. Under the notation, $\ell^*_{i,i+1}$ are the $\rpoint$-arcs in $\zD^*$ that are already determined, where $i\in [1,m-1]\cup[m+1,n-1]$. Since $q_{n+1}$ is isolated, there is at least one $\rpoint$-arc in $\zD^*$ connecting it, which is denoted by $\ell^*_{j,n+1}$.
We have the following claim: 

If $m\neq n$, then there exists exactly one choice of $\bpoint$-arc $\zg_n$ such that $\bigoplus_{i=1}^{n}M_{\zg_i}$ is a tilting module in $\ma$; if $m=n$ then there are $r$ choices of $\bpoint$-arc $\zg_n$ such that $\bigoplus_{i=1}^{n}M_{\zg_i}$ is a tilting module in $\ma$, where $1\leq r \leq n$. 

The proof of the claim is a straightforward check. At first, we complete the simple coordinate $\zD^*$ in Figure \ref{fig:twoposs3} by adding more $\rpoint$-arcs, and then find $\zg_n$ by using the following two properties: $\zg_n$ is zigzag with respect to $\zD^*$; the weight of any oriented intersection of $\zg_n$ with $\zg_i, 1\leq i \leq n-1$, is zero. 
We list four representatives of them in Figure \ref{fig:twoposs4}, where the first three pictures explain the case $m\neq n$ and the last picture explains the case $m=n$. 

A similar argument shows that $1\leq r \leq n$ for the other two kinds of surfaces: a once-punctured disk and an annulus.
We list four representatives in Figure \ref{fig:twoposs5}.
The only exception is the annulus that has only one $\bpoint$-point on one of the two boundary components, and this $\bpoint$-point does not belong to a bigon formed by $\zg'_i$ and $\zg_i$, see in Figure \ref{fig:upbound}. In this case $r=n+1$.

To sum up, we have shown that for the almost-tilting module $\bigoplus_{i=1}^{n}M_{\zg_i}$ given in the statement, there is at least one complement, and the number of the complements will not exceed the rank of the surface plus one.

\begin{figure}
\begin{center}
\begin{tikzpicture}[>=stealth,scale=0.7]
\draw[line width=1.8pt,fill=white] (0,0) circle (4cm);
\path (0:4) coordinate (b1)
(45:4) coordinate (b2)			(90:4) coordinate (b3)
(135:4) coordinate (b4)
(180:4) coordinate (b5)
(225:4) coordinate (b6)
(270:4) coordinate (b7)
(315:4) coordinate (b8)
				
(22.5:4) coordinate (r1)
(67.5:4) coordinate (r2)
(112.5:4) coordinate (r3)
(157.5:4) coordinate (r4)
(202.5:4) coordinate (r5)(247.5:4) coordinate (r6)
(292.5:4) coordinate (r7)
(337.5:4) coordinate (r8);


\draw[very thick,bend left](r7)to(r8)to(r1)to(r2)to(r3)to(r4)to(r5)to(r6);
\draw[very thick,bend left,red!60](b8)to(b1)to(b2)to(b3)to(b4)to(b5)to(b6);
\draw[very thick,bend left,dashed,red!60](b6)to(b7)to(b8);
\draw[very thick,bend left,dashed](r6)to(r7);
\draw[thick,red,fill=red] (b1) circle (0.1cm)
	(b2) circle (.1cm)
	(b3) circle (.1cm)
	(b4) circle (.1cm)
	(b5) circle (.1cm)
	(b6) circle (.1cm)
	(b7) circle (.1cm)
	(b8) circle (.1cm);	
	
\draw[thick,fill=white] (r1) circle (0.1cm)
    (0,0) circle (0.1cm)
	(r2) circle (0.1cm)
	(r3) circle (0.1cm)
	(r4) circle (0.1cm)
	(r5) circle (0.1cm)
	(r6) circle (0.1cm) 
	(r7) circle (0.1cm)
	(r8) circle (0.1cm);	
 \draw[] (5.4,0) {};
	\end{tikzpicture}
 \begin{tikzpicture}[>=stealth,scale=0.7]
\draw[line width=1.8pt,fill=white] (0,0) circle (4cm);
\path (0:4) coordinate (b1)
(45:4) coordinate (b2)			(90:4) coordinate (b3)
(135:4) coordinate (b4)
(180:4) coordinate (b5)
(225:4) coordinate (b6)
(270:4) coordinate (b7)
(315:4) coordinate (b8)
				
(22.5:4) coordinate (r1)
(67.5:4) coordinate (r2)
(112.5:4) coordinate (r3)
(157.5:4) coordinate (r4)
(202.5:4) coordinate (r5)(247.5:4) coordinate (r6)
(292.5:4) coordinate (r7)
(337.5:4) coordinate (r8);


\draw[very thick,bend left](r7)to(r8)to(r1)to(r2)to(r3)to(r4)to(r5)to(r6);
\draw[very thick,bend left,red!60](b8)to(b1)to(b2)to(b3)to(b4)to(b5)to(b6);
\draw[very thick,bend left,dashed,red!60](b3)to(b7);

\draw[very thick,red!60,dashed]plot [smooth,tension=1] coordinates {(0,4)(-.8,-.5)(.8,-.5)(0,4)};

\draw[very thick,dashed]plot [smooth,tension=1] coordinates {(r2)(-.3,0)(.2,-.3)(r2)};
\draw[very thick,dashed]plot [smooth,tension=1] coordinates {(r3)(.3,0)(-.2,-.3)(r3)};
\draw[thick,red,fill=red] (b1) circle (0.1cm)
	(b2) circle (.1cm)
	(b3) circle (.1cm)
	(b4) circle (.1cm)
	(b5) circle (.1cm)
	(b6) circle (.1cm)
	(b7) circle (.1cm)
	(b8) circle (.1cm);	
	
\draw[thick,fill=white] (r1) circle (0.1cm)
    (0,0) circle (0.1cm)
	(r2) circle (0.1cm)
	(r3) circle (0.1cm)
	(r4) circle (0.1cm)
	(r5) circle (0.1cm)
	(r6) circle (0.1cm) 
	(r7) circle (0.1cm)
	(r8) circle (0.1cm);	
	\end{tikzpicture}
\begin{tikzpicture}[>=stealth,scale=0.7]
\draw[line width=1.8pt,fill=white] (0,0) circle (4cm);
\draw[line width=1.8pt,fill=gray!20] (0,0) circle (1.3cm);
\path (0:4) coordinate (b1)
(45:4) coordinate (b2)			(90:4) coordinate (b3)
(135:4) coordinate (b4)
(180:4) coordinate (b5)
(225:4) coordinate (b6)
(270:4) coordinate (b7)
(315:4) coordinate (b8)
				
(22.5:4) coordinate (r1)
(67.5:4) coordinate (r2)
(112.5:4) coordinate (r3)
(157.5:4) coordinate (r4)
(202.5:4) coordinate (r5)(247.5:4) coordinate (r6)
(292.5:4) coordinate (r7)
(337.5:4) coordinate (r8);

\path (0:1.3) coordinate (bb1)
(90:1.3) coordinate (bb2)		(180:1.3) coordinate (bb3)
(270:1.3) coordinate (bb4)
(45:1.3) coordinate (rr1)(135:1.3) coordinate (rr2)(225:1.3) coordinate (rr3)(315:1.3) coordinate (rr4);


\draw[very thick,bend left](r7)to(r8)to(r1)to(r2)to(r3)to(r4)to(r5)to(r6);

\draw[very thick,bend left,red!60](b8)to(b1)to(b2)to(b3)to(b4)to(b5)to(b6);
\draw[very thick,dashed,red!60](b3)to(bb2);
\draw[very thick,dashed,red!60](b7)to(bb4);

\draw[very thick]plot [smooth,tension=1] coordinates {(rr1)(0,1.7) (rr2)};
\draw[very thick]plot [smooth,tension=1] coordinates {(rr2)(-1.7,0) (rr3)};
\draw[very thick]plot [smooth,tension=1] coordinates { (rr3)(0,-1.7) (rr4)};
\draw[very thick]plot [smooth,tension=1] coordinates {(rr4)(1.7,0)(rr1)};
\draw[very thick,red!60]plot [smooth,tension=1] coordinates {(bb2)(1.4,1.4)(bb1)};
\draw[very thick,red!60]plot [smooth,tension=1] coordinates {(bb1)(1.4,-1.4)(bb4)};
\draw[very thick,red!60]plot [smooth,tension=1] coordinates {(bb4)(-1.4,-1.4)(bb3)};
\draw[very thick,red!60]plot [smooth,tension=1] coordinates {(bb3)(-1.4,1.4)(bb2)};

\draw[very thick,dashed]plot [smooth,tension=1] coordinates {(r2)(1,3)(-.5,2)(rr2)};

\draw[very thick,dashed]plot [smooth,tension=1] coordinates {(r3)(-1,3)(.5,2)(rr1)};

\draw[thick,red,fill=red] (b1) circle (0.1cm)
	(b2) circle (.1cm)
	(b3) circle (.1cm)
	(b4) circle (.1cm)
	(b5) circle (.1cm)
	(b6) circle (.1cm)
	(b7) circle (.1cm)
	(b8) circle (.1cm)
 (bb1) circle (.1cm)
 (bb2) circle (.1cm)
 (bb3) circle (.1cm)
 (bb4) circle (.1cm);	
	
 \draw[thick,fill=white] (r1) circle (0.1cm)
	(r2) circle (0.1cm)
	(r3) circle (0.1cm)
	(r4) circle (0.1cm)
	(r5) circle (0.1cm)
	(r6) circle (0.1cm) 
	(r7) circle (0.1cm)
	(r8) circle (0.1cm)
 (rr1) circle (.1cm)
 (rr2) circle (.1cm)
 (rr3) circle (.1cm)
 (rr4) circle (.1cm);	
  \draw[] (5.4,0) {};
\end{tikzpicture}
\begin{tikzpicture}[>=stealth,scale=0.7]
\draw[line width=1.8pt,fill=white] (0,0) circle (4cm);
\draw[line width=1.8pt,fill=gray!20] (0,0) circle (1.3cm);
\path (0:4) coordinate (b1)
(45:4) coordinate (b2)			(90:4) coordinate (b3)
(135:4) coordinate (b4)
(180:4) coordinate (b5)
(225:4) coordinate (b6)
(270:4) coordinate (b7)
(315:4) coordinate (b8)
				
(22.5:4) coordinate (r1)
(67.5:4) coordinate (r2)
(112.5:4) coordinate (r3)
(157.5:4) coordinate (r4)
(202.5:4) coordinate (r5)(247.5:4) coordinate (r6)
(292.5:4) coordinate (r7)
(337.5:4) coordinate (r8);

\path (0:1.3) coordinate (bb1)
(90:1.3) coordinate (bb2)		(180:1.3) coordinate (bb3)
(270:1.3) coordinate (bb4)
(45:1.3) coordinate (rr1)(135:1.3) coordinate (rr2)(225:1.3) coordinate (rr3)(315:1.3) coordinate (rr4);


\draw[very thick,bend left](r7)to(r8)to(r1)to(r2)to(r3)to(r4)to(r5)to(r6);

\draw[very thick,bend left,red!60](b8)to(b1)to(b2)to(b3)to(b4)to(b5)to(b6);
\draw[very thick,dashed,red!60]plot [smooth,tension=1] coordinates {(b1)(2.8,-1)(b7)};
\draw[very thick,dashed,red!60](b7)to(bb4);
\draw[very thick,dashed]plot [smooth,tension=1] coordinates {(r8)(0,-2)(rr3)};

\draw[very thick]plot [smooth,tension=1] coordinates {(rr1)(0,1.7) (rr2)};
\draw[very thick]plot [smooth,tension=1] coordinates {(rr2)(-1.7,0) (rr3)};
\draw[very thick]plot [smooth,tension=1] coordinates { (rr3)(0,-1.7) (rr4)};
\draw[very thick]plot [smooth,tension=1] coordinates {(rr4)(1.7,0)(rr1)};
\draw[very thick,red!60]plot [smooth,tension=1] coordinates {(bb2)(1.4,1.4)(bb1)};
\draw[very thick,red!60]plot [smooth,tension=1] coordinates {(bb1)(1.4,-1.4)(bb4)};
\draw[very thick,red!60]plot [smooth,tension=1] coordinates {(bb4)(-1.4,-1.4)(bb3)};
\draw[very thick,red!60]plot [smooth,tension=1] coordinates {(bb3)(-1.4,1.4)(bb2)};


\draw[thick,red,fill=red] (b1) circle (0.1cm)
	(b2) circle (.1cm)
	(b3) circle (.1cm)
	(b4) circle (.1cm)
	(b5) circle (.1cm)
	(b6) circle (.1cm)
	(b7) circle (.1cm)
	(b8) circle (.1cm)
 (bb1) circle (.1cm)
 (bb2) circle (.1cm)
 (bb3) circle (.1cm)
 (bb4) circle (.1cm);	
	
 \draw[thick,fill=white] (r1) circle (0.1cm)
	(r2) circle (0.1cm)
	(r3) circle (0.1cm)
	(r4) circle (0.1cm)
	(r5) circle (0.1cm)
	(r6) circle (0.1cm) 
	(r7) circle (0.1cm)
	(r8) circle (0.1cm)
 (rr1) circle (.1cm)
 (rr2) circle (.1cm)
 (rr3) circle (.1cm)
 (rr4) circle (.1cm);	
  \draw[] (0,4.8) {};
\end{tikzpicture}
		\end{center}
		\begin{center}
			\caption{
Complete the $\rpoint$-arcs in the once-punctured disk and the annulus of Figure \ref{fig:twoposs3} to get the simple coordinate $\zD^*$.
We always need two more $\rpoint$-arcs. Depending on these arcs' positions, there are several cases, where we list four representatives, where the new $\rpoint$-arcs are dashed. For these cases, there are $1\leq r \leq 2$ choices of $\bpoint$-arc $\zg_n$ such that $\bigoplus_{i=1}^{n}M_{\zg_i}$ is a tilting module in $\ma$, where $\zg_n$ is the dashed $\bpoint$-arc.
The only exception is when the surface is an annulus and there is only one $\bpoint$-point on one of the boundary component, see Figure \ref{fig:upbound}.}\label{fig:twoposs5}
		\end{center}
	\end{figure}

\begin{figure}
\begin{center}
\begin{tikzpicture}[>=stealth,scale=0.7]
\draw[line width=1.8pt,fill=white] (0,0) circle (4cm);
\draw[line width=1.8pt,fill=gray!20] (0,0) circle (.6cm);
\path (0:4) coordinate (b1)
(45:4) coordinate (b2)			(90:4) coordinate (b3)
(135:4) coordinate (b4)
(180:4) coordinate (b5)
(225:4) coordinate (b6)
(270:4) coordinate (b7)
(315:4) coordinate (b8)
				
(22.5:4) coordinate (r1)
(67.5:4) coordinate (r2)
(112.5:4) coordinate (r3)
(157.5:4) coordinate (r4)
(202.5:4) coordinate (r5)(247.5:4) coordinate (r6)
(292.5:4) coordinate (r7)
(337.5:4) coordinate (r8);

\path (0:.6) coordinate (bb1)
(90:.6) coordinate (bb2)		(180:.6) coordinate (bb3)
(270:.6) coordinate (bb4)
(45:.6) coordinate (rr1)(135:.6) coordinate (rr2)(225:.6) coordinate (rr3)(315:.6) coordinate (rr4);


\draw[very thick,bend left](r7)to(r8)to(r1)to(r2)to(r3)to(r4)to(r5)to(r6)to(r7);

\draw[very thick,bend left,red!60](b8)to(b1)to(b2)to(b3)to(b4)to(b5)to(b6)to(b7)to(b8);
\draw[very thick,red!60](b3)to(bb2);

\draw[very thick,dashed]plot [smooth,tension=1] coordinates {(r2)(0,2)(-.8,0.15)(0,-.6)};

\draw[very thick,dashed]plot [smooth,tension=1] coordinates {(r3)(0,2)(.8,0.15)(0,-.6)};

\draw[very thick,dashed]plot [smooth,tension=1] coordinates {(r2)(1.5,0.5)(0,-.6)};

\draw[very thick,dashed]plot [smooth,tension=1] coordinates {(r3)(-1.5,0.5)(0,-.6)};

\draw[very thick,dashed]plot [smooth,tension=1] coordinates {(r1)(2.2,0)(0,-.6)};

\draw[very thick,dashed]plot [smooth,tension=1] coordinates {(r4)(-2.2,0)(0,-.6)};

\draw[very thick,dashed]plot [smooth,tension=1] coordinates {(r5)(0,-.6)};
\draw[very thick,dashed]plot [smooth,tension=1] coordinates {(r6)(0,-.6)};
\draw[very thick,dashed]plot [smooth,tension=1] coordinates {(r7)(0,-.6)};
\draw[very thick,dashed]plot [smooth,tension=1] coordinates {(r8)(0,-.6)};

\draw[thick,red,fill=red] (b1) circle (0.1cm)
	(b2) circle (.1cm)
	(b3) circle (.1cm)
	(b4) circle (.1cm)
	(b5) circle (.1cm)
	(b6) circle (.1cm)
	(b7) circle (.1cm)
	(b8) circle (.1cm)
 (bb2) circle (.1cm);	
	
 \draw[thick,fill=white] (r1) circle (0.1cm)
	(r2) circle (0.1cm)
	(r3) circle (0.1cm)
	(r4) circle (0.1cm)
	(r5) circle (0.1cm)
	(r6) circle (0.1cm) 
	(r7) circle (0.1cm)
	(r8) circle (0.1cm)
 (bb4) circle (.1cm);	
\end{tikzpicture}
		\end{center}
		\begin{center}
			\caption{
A gentle algebra from an annulus with rank $n$, and an almost-tilting module which has $n+1$ complements, where $n=8$ in the picture. The almost-tilting module is given by the solid $\bpoint$-arcs, and its complements are given by the dashed arcs.}\label{fig:upbound}
		\end{center}
	\end{figure}
\end{proof}

Happel conjectured that
any almost-tilting module over a finite-dimensional algebra admits a finite number of complements, and the number of complements is bounded by $2n-1$. As explained in the introduction, this supposed bound should be changed to $2n$, see the conjecture {\bf{(B)}} in the introduction. The following theorem says that the modified Happel's conjecture holds for gentle algebras.
In particular, the theorem shows that the answer to the problem ${\bf (Cn-1)}$ is positive for any gentle algebra with rank $n.$

\begin{theorem}\label{thm:almost-tilting is partial-tilting}
Any almost-tilting module over a gentle algebra is partial-tilting, and there are at most $2n$ complements, where $n$ is the rank of the algebra.
\end{theorem}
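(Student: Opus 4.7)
The plan is to reduce the theorem to the base case treated in Lemma~\ref{lem:keykey} by cutting the surface simultaneously along all arcs corresponding to the given almost-tilting module. Let $T = \bigoplus_{i=1}^{n-1} M_{\zg_i}$ be the given almost-tilting $A$-module, so by Proposition~\ref{prop:tilting dissection}(1) the family $\zG = \{\zg_1, \ldots, \zg_{n-1}\}$ is a pre-tilting collection of simple zigzag $\bpoint$-arcs on $(\cals, \calm, \zD^*)$.

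\medskip

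First I would form the cutting surface $(\cals_\zG, \calm_\zG, \zD^*_\zG)$. Iterating Proposition~\ref{prop:rank} yields that its rank equals $n + (n-1) = 2n - 1$. By Lemma~\ref{lemma:corresp1.}, each $\zg_i$ gives rise to two simple zigzag $\bpoint$-arcs $\zg_i^1, \zg_i^2$, each bounding a bigon with a new boundary segment that contains exactly one $\rpoint$-point ($q_i'$ or $q_i''$, respectively). The collection $\widetilde\zG := \{\zg_i^1, \zg_i^2 : 1 \le i \le n-1\}$ therefore consists of $2(n-1) = 2n-2$ arcs on a surface of rank $2n-1$, exactly one short of full tilting. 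The weight-preservation arguments in the proof of Lemma~\ref{lem:key} show that $\widetilde\zG$ is itself a pre-tilting collection, so the associated module over $A_\zG$ is almost-tilting. Moreover, Corollary~\ref{cor:key} provides a bijection between completions of $\zG$ to tilting dissections on $(\cals, \calm, \zD^*)$ and completions of $\widetilde\zG$ to tilting dissections on $(\cals_\zG, \calm_\zG, \zD^*_\zG)$, so it suffices to count the latter.

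\medskip

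Then I would apply Lemma~\ref{lem:keykey} to the cutting surface. If $\cals_\zG$ is connected, its hypotheses hold verbatim with ambient rank $2n-1$ and the lemma directly yields $1 \le r \le (2n-1)+1 = 2n$ completions. If $\cals_\zG = C_1 \sqcup \cdots \sqcup C_k$ is disconnected, with component ranks $m_1, \ldots, m_k$, and if $r_j$ denotes the number of arcs of $\widetilde\zG$ on $C_j$, then $\sum r_j = 2n-2$ and $\sum m_j = 2n-1$, so exactly one component (say $C_1$) satisfies $r_1 = m_1 - 1$ while the remaining $C_j$ satisfy $r_j = m_j$. On the latter the restricted collection is already a tilting dissection by Proposition~\ref{prop:tilting dissection}(2), since a pre-tilting collection of full rank is automatically an admissible dissection with vanishing weights. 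On $C_1$ we invoke Lemma~\ref{lem:keykey} to obtain $1 \le r \le m_1 + 1 \le 2n$ completions. Transporting back via Corollary~\ref{cor:key} shows that $T$ is partial-tilting with between $1$ and $2n$ complements.

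\medskip

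The hardest step will be the careful verification that after performing all $n-1$ cuts at once the resulting configuration genuinely satisfies the hypotheses of Lemma~\ref{lem:keykey}: one must confirm that each pair $\zg_i^1, \zg_i^2$ continues to bound a bigon with precisely one $\rpoint$-point in the iteratively cut $(\cals_\zG, \calm_\zG, \zD^*_\zG)$, that the collection restricted to each connected component remains pre-tilting, and that the bijection of Corollary~\ref{cor:key} extends compatibly to multi-cut families as promised by the inductive construction in subsection~\ref{section:reduction construction}. These are essentially bookkeeping rather than substantive obstructions, since all of the key weight-preservation and bigon-preservation features are established one cut at a time in Lemma~\ref{lem:key} and its corollary; nevertheless the case analysis underlying the bound $r \le m_1 + 1$ in Lemma~\ref{lem:keykey}---which is itself the technical core of the paper---is what ultimately forces the bound $2n$ rather than some smaller quantity.
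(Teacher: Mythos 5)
Your proof is correct and follows essentially the same route as the paper: cut along the pre-tilting collection $\zG$ to land on a surface of rank $2n-1$, transfer the problem via Corollary~\ref{cor:key}, and invoke Lemma~\ref{lem:keykey} on the one connected component where the lifted collection has rank one below full. Your explicit accounting of the disconnected case (the observation that $\sum r_j = 2n-2$, $\sum m_j = 2n-1$, and $r_j \le m_j$ force exactly one defective component) is slightly more detailed than the paper's terse statement that the exceptional sub-surface ``corresponds to a connected component,'' but the content is the same.
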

\begin{proof}
A gentle algebra with rank one is a single vertex or a vertex with a loop such that the composition of the loop is zero. It is clear that the statements hold for both cases.
Now let $A$ be a gentle algebra associated with a marked surface $(\cals,\calm,\zD^*)$ with a simple coordinate, and the rank $n$ of $A$ is at least two. 
Let $M=\bigoplus_{i=1}^{n-1}M_{\zg^i}$ be an almost-tilting module over $A$ with a pre-tilting collection $\zG=\{\zg^i, 1\leq i \leq n-1\}$. By Proposition \ref{prop:rank}, the rank of the algebra $A_\zG$ associated with the cutting surface $(\cals_\zG,{\calm_\zG},{\zD^*_\zG})$ is $2n-1$.
Denote by $\widehat{\zG}=\{\zg^i_1,\zg^i_2, 1\leq i \leq n-1\}$ the set of $\bpoint$-arcs on the cutting surface induced by arcs in $\zG$.
Now consider the sub-surfaces formed by arcs in $\widehat{\zG}$ and the boundary segments. Since there are $2n-2$ $\bpoint$-arcs in $\widehat{\zG}$, excepting for one sub-surface, each sub-surface is a polygon with one $\rpoint$-point. 
Note that this exceptional sub-surface satisfies the conditions in the statement of Lemma \ref{lem:keykey}, and this sub-surface corresponds a connected component of $(\cals_\zG,{\calm_\zG},{\zD^*_\zG})$, which we denote by $(\widetilde{\cals},\widetilde{\calm})$.
The module $M$ in $\ma$ induces a module $\widehat{M}=\bigoplus_{i=1}^{n-1}(M_{\zg^i_1}\oplus M_{\zg^i_2})$ in $\maG$, which is almost-tilting.
Let $\tilde{n}$ be the rank of $(\widetilde{\cals},\widetilde{\calm})$, and let $m$ be the number of the complements to $\widehat{M}$.
Then $\tilde{n}\leq 2n-1$, and by Lemma \ref{lem:keykey}, $1\leq m\leq \tilde{n}+1\leq 2n$. Furthermore, by Corollary \ref{cor:key} and Proposition \ref{prop:tilting dissection}, there are $m$ complements to $M$, where $1\leq m\leq 2n$.
\end{proof}

\begin{remark}
Note that there exists a gentle algebra and an almost-tilting on it with $2n-1$ complements. For example, we consider the gentle algebra $A$ associated with an annulus with two marked $\bpoint$-points given in Figure \ref{fig:upbound1}. Then the rank of $A$ is two, and the module $M_\zg$ associated with the $\bpoint$-arc $\zg$ is almost-tilting. There are three complements to $M_\zg$, given by the dashed $\bpoint$-arcs. 
It seems interesting to ask whether the up-bound $2n$ can be reached. A possible way to construct such an example is starting from the annulus with a simple coordinate given in Figure \ref{fig:upbound}, and then `gluing' it to a (connected) marked surface with a simple coordinate by $n-1$ steps, such that the lifted $\bpoint$-arcs form a pre-tilting collection.
Then the pre-tilting collection gives rise to an almost-tilting module over the `gluing algebra', whose rank is $n$ and it has $2n$ complements.
\begin{figure}
\begin{center}
\begin{tikzpicture}[>=stealth,scale=0.7]
\draw[line width=1.8pt,fill=white] (0,0) circle (4cm);
\draw[line width=1.8pt,fill=gray!20] (0,0) circle (.7cm);

\path (90:4) coordinate (r1)
      (90:.7) coordinate (r2)
      (270:4) coordinate (b1)
      (270:.7) coordinate (b2);

\draw[very thick,red]plot [smooth,tension=1] coordinates {(r1)(-1.8,-.9)(1.8,-.9)(r1)};
\draw[very thick,red]plot [smooth,tension=1] coordinates {(r1)(r2)};
\draw[very thick]plot [smooth,tension=1] coordinates {(b1)(-2.4,-2)(-2,2)(2,2)(2.4,-2)(b1)};

\draw[very thick,dashed]plot [smooth,tension=1] coordinates {(b1)(b2)};
\draw[very thick,dashed]plot [smooth,tension=1] coordinates {(b1)(1.5,-.5)(.2,1.5)(-1.5,0)(b2)};
\draw[very thick,dashed]plot [smooth,tension=1] coordinates {(b1)(-1.2,-1)(-.2,1.2)(1,-.2)(b2)};

\node [] at (2.5,0){$\zg$};

\draw[thick,red,fill=red] (r1) circle (0.1cm);	
\draw[thick,red,fill=red](r2) circle (0.1cm);	
\draw[thick,fill=white] (b1) circle (0.1cm);	
\draw[thick,fill=white](b2) circle (0.1cm);	
\end{tikzpicture}
		\end{center}
		\begin{center}
			\caption{
An example of a gentle algebra with rank $n$, and an almost-tilting on it which has $2n-1$ complements, where $n=2$. The almost-tilting module is given by the solid $\bpoint$-arc $\zg$, and its complements are given by the dashed arcs.}\label{fig:upbound1}
		\end{center}
	\end{figure}
\end{remark}

\subsection{Pre-tilting modules that are not partial-tilting}\label{section:pre tilting}
\begin{theorem}\label{thm:pre-tilting not partial-tilting}
Any pre-tilting module over a gentle algebra with rank $n=2$ is always partial-tilting. For any $n\geq 3$ and $1\leq m \leq n-2$, there always exists a (connected) gentle algebra with rank $n$ and a pre-tilting module over it with rank $m$ which is not partial-tilting.
\end{theorem}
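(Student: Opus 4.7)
The statement has two parts. For the first part, when $n=2$ a pre-tilting module $T$ satisfies $|T|\in\{0,1,2\}$; if $|T|=2=n$ then the corollary following Proposition~\ref{prop:tilting dissection} gives that $T$ is already tilting, while if $|T|=1=n-1$ then $T$ is almost-tilting and Theorem~\ref{thm:almost-tilting is partial-tilting} yields that $T$ is partial-tilting (the case $|T|=0$ is trivial). Hence every pre-tilting module is partial-tilting when $n=2$.

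For the second part my plan is to produce an explicit family of counter-examples indexed by $(n,m)$, starting from the base case $(n,m)=(3,1)$ and promoting it via surface gluing. For the base case I realize a Rickard--Schofield-type counter-example as a gentle algebra: take $\surf$ to be a connected marked surface of rank $3$ (as depicted in Figure~\ref{fig:not partial2} for $n=3$) together with a distinguished simple zigzag $\bpoint$-arc $\zg$ such that $M_\zg$ is pre-tilting but not partial-tilting. That $M_\zg$ is pre-tilting is immediate from Proposition~\ref{prop:main-extensions}: $\zg$ has no interior self-intersections and its endpoint self-intersections have weight~$0$, so $\Ext^i_A(M_\zg,M_\zg)=0$ for $i\geq 1$; finite projective dimension follows from Proposition~\ref{prop:proj-res-string} since both endpoints lie in $\calm_\bpoint$. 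To check that $\zg$ is not partial-tilting it suffices, by Proposition~\ref{prop:tilting dissection}, to show that every admissible dissection containing $\zg$ admits an oriented intersection of nonzero weight; on the small base surface this is a finite enumeration, with the weights read off directly from $\zD^*$ via Definition~\ref{definition:weighted intersections1}.

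For general $n\geq 3$ and $1\leq m\leq n-2$, I glue additional pieces onto the base surface: I attach a disk with $n-3$ extra $\bpoint$-boundary points along a boundary $\bpoint$-point of the base surface disjoint from $\zg$, and place $m-1$ pairwise non-crossing simple zigzag $\bpoint$-arcs $\zg^{(1)},\dots,\zg^{(m-1)}$ entirely in this new disk piece. By Proposition~\ref{prop:rank} the resulting connected marked surface has rank exactly $n$, and the family $\{\zg,\zg^{(1)},\dots,\zg^{(m-1)}\}$ is a pre-tilting collection because the new arcs have no oriented intersection with $\zg$ and can be chosen with pairwise weight-zero intersections inside the disk. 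The main obstacle is showing that this enlarged collection remains \emph{not} partial-tilting, and my plan is to argue by contradiction using the cutting construction of Subsection~2.2: if a tilting dissection $\top$ contained $\{\zg,\zg^{(1)},\dots,\zg^{(m-1)}\}$, then iterated cutting along $\zg^{(1)},\dots,\zg^{(m-1)}$ combined with Corollary~\ref{cor:key} would produce a tilting dissection on the cutting surface $(\cals_\zG,\calm_\zG,\zD^*_\zG)$; restricting to the connected component containing the image of $\zg$ would then yield a tilting dissection on the base surface containing $\zg$, contradicting the conclusion of the base case. The bookkeeping on ranks and on weights through the cuts is provided by Proposition~\ref{prop:rank} and Lemma~\ref{lemma:corresp.}.
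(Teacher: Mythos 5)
Your argument for the first part ($n=2$) is correct and essentially identical to the paper's: it reduces everything to Theorem \ref{thm:almost-tilting is partial-tilting} together with the corollary after Proposition \ref{prop:tilting dissection}.

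For the second part there is a genuine gap, and you also miss the one-line observation that makes the paper's proof work. The paper exhibits, for \emph{every} $n\geq 3$, a single torus with one boundary component and a simple coordinate $\zD^*$ as in Figure \ref{fig:not partial2}, together with a pre-tilting collection $\{\zg^1,\dots,\zg^{n-2}\}$, and then reduces everything to the case $m=1$: since any direct summand of a partial-tilting module is itself partial-tilting, showing $M_{\zg^1}$ is not partial-tilting immediately shows $\oplus_{i=1}^{m}M_{\zg^i}$ is not partial-tilting for all $1\leq m\leq n-2$. Your plan replaces this trivial reduction with an induction on surface cuts, which runs into two problems. First, the direction of the cut goes the wrong way for the rank count: cutting increases rank by $1$ per arc (Proposition \ref{prop:rank}), so after cutting your rank-$n$ surface along $\zg^{(1)},\dots,\zg^{(m-1)}$ you land on a surface of rank $n+m-1$, and the connected component of the cut surface containing $\widehat{\zg}$ is the torus piece \emph{together with} whatever remains of the attached disk (plus the new $\rpoint$- and $\bpoint$-points introduced by the cut); it is not the rank-$3$ base surface you want to invoke, and Corollary \ref{cor:key} says nothing about restricting a tilting dissection to a connected component. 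Second, your verification of the base case by ``finite enumeration'' is not available on a torus: there are infinitely many isotopy classes of simple zigzag $\bpoint$-arcs (Dehn twists), so you cannot enumerate tilting dissections containing $\zg$ naively; the paper instead gives a uniform argument that any such $\top$ must lie in a proper sub-surface of rank $n-2$, which is impossible since $|\top|=n$.

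A smaller but real issue is that the ``attach a disk along a boundary $\bpoint$-point'' operation is not the inverse of the cutting construction and is not defined in the paper; what you seem to want is simply to add $n-3$ new alternating marked points to the boundary, which leaves the topology unchanged but raises the rank. If you do that, you in fact recover the paper's surface of Figure \ref{fig:not partial2}, and the gluing/cutting detour becomes unnecessary once you notice that it suffices to prove the $m=1$ case.
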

\begin{proof}
The first claim follows from Theorem \ref{thm:almost-tilting is partial-tilting}.
We explicitly construct counterexamples for the second statement. 
For any $n\geq 3$, let $(\cals, \calm)$ be a torus with one boundary component which has $n-1$ marked $\bpoint$-points. We consider a simple coordinate $\zD^*$ given in Figure \ref{fig:not partial2}, with the associated gentle algebra $A(n)$. Let $\zg^1, \zg^2, \cdots, \zg^m$ be $\bpoint$-arcs depicted in the figure. Then for each $1\leq m \leq n-2$,  the module $M=\oplus_{i=1}^{m}M_{\zg^i}$ is a pre-tilting module over $A(n)$, which we claim not to be partial-tilting. Note that it is only needed to prove the case when $m=1$. 

Suppose that $\top$ is a tilting dissection which contains $\zg^1$. 
We claim that each $\bpoint$-arc $\zg$ in $\top$ does not intersect the $\rpoint$-arcs $1$ and $2$.
Then $\zg$ belongs to the sub-surface of the torus formed by $\rpoint$-arcs $1$, $2$, and the boundary segments, just as depicted in the figure. Note that this sub-surface is topologically a disk with rank $n-2$, after deforming the $\rpoint$-arcs $1$, $2$ as $\rpoint$-points. Furthermore, $\top$ is a tilting dissection on the sub-surface. However, this is impossible since there are $n$ arcs in $\top$. 

Now we prove the claim. At first, if one endpoint of $\zg$ is $p_1$, then it can not intersect the $\rpoint$-arcs $1$, $2$, otherwise, the weight of the oriented intersection between $\zg$ and $\zg^1$ at $p_1$ does not equal zero, which contradicts the assumption that both $\zg$ and $\zg^1$ belong to the tilting dissection $\top$.
Secondly, if the endpoints of $\zg$ are $p_i$, $2\leq i \leq p-1$, then it is enough to show that $\zg$ does not go through $\ccc$ and $\ddd$. 
Suppose that $\zg$ goes through $\ccc$, then one of its endpoints must be $p_2$, otherwise, there is an interior intersection between $\zg$ and $\zg^1$. Furthermore, another endpoint must be $p_1$, which derives a contradiction by the first step. A similar argument works for the arrow $\ddd$, that is, if $\zg$ goes through $\ddd$, then one of the endpoints of $\zg$ must be $p_1$. This confirms the claim, and we are done. 

\begin{figure}
\begin{center}
\begin{tikzpicture}
[>=stealth,scale=0.6]
		\draw[line width=1.5pt,fill=gray!30]  (-4,3.5) ellipse[x radius=1.8, y radius=1];
		\draw[line width=1.5pt,fill=gray!30]  (-6,-3.5) ellipse[x radius=1.8, y radius=1];
		\draw[line width=1.5pt,fill=gray!30]  (6,3.5) ellipse[x radius=1.8, y radius=1];
		\draw[line width=1.5pt,fill=gray!30]  (4,-3.5) ellipse[x radius=1.8, y radius=1];
  
		\draw[thick,red, fill=red] (-4,4.5) circle (0.11);
		\draw[thick,red,fill=red] (-4.9,4.35) circle (0.11);	
		\draw[thick,red,fill=red] (-5.8,3.5) circle (0.11);
		\draw[thick,red,fill=red] (-2.2,3.5) circle (0.11);
				
		\draw[thick,red, fill=red] (6,4.5) circle (0.11);
		\draw[thick,red,fill=red] (4.9,4.3) circle (0.11);	
		\draw[thick,red,fill=red] (7.8,3.5) circle (0.11);
		\draw[thick,red,fill=red] (4.2,3.5) circle (0.11);

		\draw[thick,red, fill=red] (4,-2.5) circle (0.11);
      \draw[thick,red,fill=red] (2.9,-2.7) circle (0.11);
		\draw[thick,red,fill=red] (5.8,-3.5) circle (0.11);
		\draw[thick,red,fill=red] (2.2,-3.5) circle (0.11);

	    \draw[thick,red, fill=red] (-6,-2.5) circle (0.11);
			\draw[thick,red,fill=red] (-7.1,-2.7) circle (0.11);
			\draw[thick,red,fill=red] (-7.8,-3.5) circle (0.11);
			\draw[thick,red,fill=red] (-4.2,-3.5) circle (0.11);

			\draw [red, line width=1pt] (-5.8,3.5) to (-4.2,-3.5);
			\draw [red, line width=1pt] (-2.2,3.5) to (4.2,3.5);
			\draw [red, line width=1pt] (4.2,3.5) to (5.8,-3.5);
			\draw [red, line width=1pt] (-4.2,-3.5) to (2.2,-3.5);
				
				\draw[cyan,line width=1pt,red] (5.8,-3.5) 
				to[out=110,in=0](0.5,2)
				to[out=-180,in=120](-1,-1.5)
				to[out=-60,in=170](2.2,-3.5);
				
				\draw[cyan,line width=1pt,red] (5.8,-3.5) 
				to[out=115,in=0](2,0.5)
				to[out=180,in=90](1,-0.5)
				to[out=-90,in=150](2.9,-2.7);
				
				\draw[cyan,line width=1pt,red] (5.8,-3.5) 
				to[out=120,in=0](3,-0.8)
				to[out=180,in=150](4,-2.5);
				
				\draw[red] (0,3.8) node {$1$};
				\draw[red] (0,-3.8) node {$1$};
				\draw[red] (-5.3,0) node {$2$};
				\draw[red] (5.3,0) node {$2$};
				\draw[red] (-1.5,1) node {$3$};
				\draw[red] (0.8,0) node {$4$};
				\draw[red] (2.5,-0.8) node {$n$};
				
				\draw[black] (1,1.3) node {\tiny$\gamma^{1}$};
				\draw[black] (1.9,-1.5) node {\tiny$\gamma^{2}$};
				\draw[black] (2.7,-2.1) node {\tiny$\gamma^{n-2}$};
				\draw[red,thick,bend right,->,>=stealth] (5,0) to node[above ]{$\ccc$} (4.35,-0.3);
				\draw[red,thick,bend right,->,>=stealth] (3.8,3.45) to node[below left]{$\aaa$} (4.3,3);
				\draw[red,thick,bend right,->,>=stealth] (-3.8,-3.45) to node[above right]{$\bbb$} (-4.3,-3);
				\draw[red,thick,bend right,->,>=stealth] (0.2,-2.8) to node[ left]{$\ddd$} (0,-3.48);
			\draw[cyan,line width=1pt,black] (-4,2.5) to[out=0,in=100](4.7,-2.5);
            \draw[] (-4,2.1) node {\tiny$p_1$};
            \draw[] (4.7,-3) node {\tiny$p_2$};
            \draw[] (3,-3.4) node {\tiny$p_{n-1}$};
			\draw[cyan,line width=1pt,black] (4.7,-2.5) 
			to[out=140,in=0](3.5,-1.8)to[out=180,in=150](3.45,-2.55);
			\draw[cyan,line width=1pt,black] (4.7,-2.5)to[out=120,in=0](3,-1)to[out=180,in=130](2.4,-3.05);

\draw[thick,black,fill=white] (4.7,-2.57) circle (0.11);	
\draw[thick,black,fill=white] (6,2.5) circle (0.11);
\draw[thick,black,fill=white] (-4,2.5) circle (0.11);
\draw[thick,black,fill=white] (4,-4.5) circle (0.11);
\draw[thick,black,fill=white] (3.45,-2.55) circle (0.11);	
\draw[thick,black,fill=white] (2.4,-3.05) circle (0.11);	\draw[thick,black,fill=white] (-6,-4.5) circle (0.11);
\end{tikzpicture}
\begin{tikzpicture}[>=stealth]
				\draw (-6,1.2) node {$1$};
				\draw (-6,-1.2) node {$2$};
				\draw (-4.5,0) node {$3$};
				\draw (-2.8,0) node {$4$};
				\draw (-.5,0) node {$\dots$};
				\draw (1.8,0) node {$n$};

				\draw (-6.35,0) node {$\aaa$};
				\draw (-5.65,0) node {$\bbb$};
				\draw (-5,-0.8) node {$\ccc$};
				\draw (-5,0.8) node {$\ddd$};
				
				\draw [very thick,->] (-5.9,0.9) -- (-5.9,-0.9);
				\draw [very thick,->] (-6.1,0.9) -- (-6.1,-0.9);
\draw [very thick,->] (-4.6,0.15) -- (-5.8,1.1);
\draw [very thick,->] (-5.8,-1.1) --(-4.6,-0.15) ;
\draw [very thick,->] (-4.3,0) -- (-3,0);
\draw [very thick,->] (-2.5,0) -- (-1.1,0);
\draw [very thick,->] (0,0) -- (1.5,0);
\draw[dotted,thick,bend left](-5.4,0.8) to (-5.85,0.4);		
\draw[dotted,thick,bend left](-4.9,-0.4) to (-4.9,0.4);	
\draw[dotted,thick,black](-6.2,-0.5) to[out=180,in=180](-6,-1.6)to[out=0,in=-45](-5.5,-1);	
\end{tikzpicture}
\end{center}
\caption{The surface model and the associated gentle algebra such that the pre-tilting module $\bigoplus_{i=1}^{m}M_{\zg^i}$ is not partial-tilting for any $1 \leq m \leq n-2$, as a generalization of the example given in \cite{RS89} when $n=3$.} 
	\label{fig:not partial2}
\end{figure}
\end{proof}

\begin{figure}
		\begin{center}
\begin{tikzpicture}
[>=stealth,scale=0.6]
\draw [line width=2pt] (-8,4) to (8,4);
\draw [line width=2pt] (-8,-4) to (8,-4);
	            \draw [line width=1.5pt,red] (-6,4) to (-6,-4);
	            \draw [line width=1.3pt,red] (6,4) to (6,-4);
				\draw [line width=1.3pt,red] (6,4) to (-6,-4);
				
				\draw[red,thick,bend right,->,>=stealth] (-6,3.2) to node[below right]{$\ccc$} (-5.5,3.65);
				\draw[red,thick,bend right,->,>=stealth] (5.5,3.65) to node[below left]{$\bbb$} (6,3.2);

\draw[red,thick,bend right,->,>=stealth] (-5.5,-3.65) to (-6,-3.2);
\draw[red] (-5.2,-2.9) node {$\aaa$};	

\draw[red,thick,bend right,->,>=stealth] (-1,-1.25) to (-1.3,-0.88);\draw[red] (-.7,-.8) node {$\ddd$};

				\draw[line width=1.5pt,red,fill=red] (-6,4) circle (0.13);
				\draw[line width=1.5pt,red,fill=red] (0,4) circle (0.13);
				\draw[line width=1.5pt,red,fill=red] (6,4) circle (0.13);
				\draw[line width=1.5pt,red,fill=red] (-6,-4) circle (0.13);
				\draw[line width=1.5pt,red,fill=red] (-3,-4) circle (0.13);
				\draw[line width=1.5pt,red,fill=red] (0,-4) circle (0.13);
				\draw[line width=1.5pt,red,fill=red] (3,-4) circle (0.13);
				\draw[line width=1.5pt,red,fill=red] (6,-4) circle (0.13);
				
				\draw[line width=1.5pt, fill=black] (-3,4) circle (0.13);
				\draw[line width=1.5pt, fill=black] (3,4) circle (0.13);
				\draw[line width=1.5pt, fill=black] (-4.5,-4) circle (0.13);
				\draw[line width=1.5pt, fill=black] (-1.5,-4) circle (0.13);
				\draw[line width=1.5pt, fill=black] (1.5,-4) circle (0.13);
				\draw[line width=1.5pt, fill=black] (4.5,-4) circle (0.13);

				\draw[cyan,line width=1.3pt,red] (-6,4) to[out=-35,in=-145](0,4);
				\draw[cyan,line width=1.3pt,red] (-6,-4)
				to[out=30,in=180](1,-1)
				to[out=0,in=90](3,-4);
				\draw[cyan,line width=1.3pt,red] (-3,-4)
				to[out=45,in=180](1,-1.8)
				to[out=0,in=95](3,-4);
				\draw[cyan,line width=1.3pt,red] (0,-4)
				to[out=65,in=180](1.5,-2.6)
				to[out=0,in=95](3,-4);

				\draw[cyan,line width=1.3pt,black] (-3,4) to[out=-35,in=-145](3,4);
				\draw[cyan,line width=1.3pt,black] (-4.5,-4) 
				to[out=45,in=180](-1.5,-2.5)
				to[out=0,in=135](1.5,-4);
				\draw[cyan,line width=1.3pt,black] (-1.5,-4) 
				to[out=35,in=180](0,-3.3)
				to[out=0,in=145](1.5,-4);
				\draw[cyan,line width=1.3pt,black] (-1.5,-4) 
				to[out=35,in=180](0,-3.3)
				to[out=0,in=145](1.5,-4);
				\draw[cyan,line width=1.3pt,black] (4.5,-4) 
				to[out=110,in=0](3,-2.3)
				to[out=180,in=70](1.5,-4);
				
				\draw[red] (0,0.5) node {$1$};
				\draw[red] (-6.4,0.9) node {$2$};
				\draw[red] (6.4,0.9) node {$2$};
				\draw[red] (-3,2.6) node {$3$};
				\draw[red] (0.6,-0.7) node {$4$};
				\draw[red] (1.2,-1.5) node {$5$};
				\draw[red] (1.6,-2.4) node {$n$};
	
				\draw[black] (3,-2) node {$\gamma^{1}$};
				\draw[black] (0,2.5) node {$\gamma^{2}$};
				\draw[black] (-2,-2.2) node {$\gamma^{3}$};
				\draw[black] (-1,-3.2) node {$\gamma^{n-2}$};
	\end{tikzpicture}
			\begin{tikzpicture}[>=stealth]
				\draw (0,1) node {};
				\draw (-6,0) node {$3$};
				\draw (-4,0) node {$2$};
				\draw (-2,0) node {$1$};
				\draw (6,0) node {$n$};
				\draw (4,0) node {$\dots$};
				\draw (2,0) node {$5$};
				\draw (0,0) node {$4$};

				\draw (-3,0.4) node {$\aaa$};
				\draw (-3,-0.4) node {$\bbb$};
				\draw (-5,0.4) node {$\ccc$};
				\draw (-1,0.4) node {$\ddd$};
				
				\draw [very thick,->] (-4.2,0) -- (-5.8,0);
				\draw [very thick,->] (-2.2,.08) -- (-3.8,.08);
				\draw [very thick,->] (-2.2,-.08) -- (-3.8,-.08);
				\draw [very thick,->] (-0.2,0) -- (-1.8,0);
				\draw [very thick,->] (0.2,0) -- (1.8,0);
				\draw [very thick,->] (2.2,0) -- (3.7,0);
				\draw [very thick,->] (4.3,0) -- (5.8,0);

				\draw[dotted,thick,bend left](-4.8,0) to (-3.2,0.15);					
				\draw[dotted,thick,bend right](-2.8,-0.15) to (-1,0);	
\end{tikzpicture}
 \end{center}
	\caption{A counterexample from an annulus such that a pre-tilting module is not partial-tilting.} 
	\label{fig:not partial} 
\end{figure}
The algebra $A(3)$ is exactly the counterexample given in \cite[Section 2]{RS89}.
It is interesting to note that, with appropriate gradings, A(3) provides an example such that there exists a pre-silting object in the associated derived category that cannot be completed into a silting object, see details in \cite[Theorem 4.12]{JSW23} and \cite[Remark 1.6]{LZ23}. 
Note that the surface in Figure \ref{fig:not partial2} is a torus. 
In fact, for any $n\geq 4$ and $2\leq m \leq n$, the annulus already gives rise to counter-examples. For the surface and the algebra given in Figure \ref{fig:not partial}, the module $M=\oplus_{i=1}^{m}M_{\zg^i}$ is a pre-tilting module which is not partial-tilting.
This is because the surface with simple coordinate is obtained from the surface with coordinate in  Figure \ref{fig:not partial2} by cutting along $\zg^1$, which induces two $\bpoint$-arcs $\zg^1$ and $\zg^2$ in Figure \ref{fig:not partial}.

\section{Appendix}

Here we list all the possibilities of the cutting surfaces, depending on the endpoints of the arc $\zg$, see Figure \ref{table:list}.

\begin{figure} 
	\centering 
	\begin{tikzpicture}[>=stealth,scale=.7]
								
			\draw [line width=1pt] (2.05,2.5) to (7.95,2.5);
					
			\draw [bend left, gray!60, line width=4pt] (1.2,4.92) to (1.2,0.08);
			\draw [bend right,  gray!60, line width=4pt] (8.8,4.92) to (8.8,0.08);
					
			\draw [bend left, line width=2.5pt] (1.3,5) to (1.3,0);
			\draw [bend right, line width=2.5pt] (8.7,5) to (8.7,0);

			\draw[thick,fill=white] (2.05,2.5) circle (0.15);
			\draw[thick,fill=white] (7.95,2.5) circle (0.15);
		    \draw[thick, red ,fill=red] (1.86,3.75) circle (0.15);
		    \draw[thick, red,fill=red] (1.86,1.25) circle (0.15);
		     \draw[thick, red ,fill=red] (8.14,3.75) circle (0.15);
		    \draw[thick, red,fill=red] (8.14,1.25) circle (0.15);

		    \draw (1.5,2.5) node {$p_1$};
		    \draw (8.55,2.5) node {$p_2$};
		    \draw (5,2.8) node {$\gamma$};
\draw (12,2.8) node {};
\end{tikzpicture}
\begin{tikzpicture}[>=stealth,scale=.6]

					\draw [line width=1pt] (2.05,2.5) to (7.95,2.5);
					\draw [line width=1pt] (2.05,3.2) to (7.95,3.2);
			
					\draw [bend left, gray!60, line width=4pt] (1.88,2.5) to (1.18,0.14);
			    	\draw [bend left, gray!60, line width=4pt] (1.18,5.7) to (1.88,3.2);
			    	\draw [bend right,  gray!60, line width=4pt] (8.1,2.5) to (8.8,0.14);
					\draw [bend right,  gray!60, line width=4pt] (8.8,5.7) to (8.1,3.2);
					
					\draw [bend left, line width=2.5pt] (2.05,2.5) to (1.3,0);
					\draw [bend left, line width=2.5pt] (1.3,5.8) to (2.05,3.2);
					\draw [bend right, line width=2.5pt] (7.95,2.5) to (8.7,0);
					\draw [bend right, line width=2.5pt] (8.7,5.8) to (7.95,3.2);

					\draw[thick,fill=white] (2.05,2.5) circle (0.15);
					\draw[thick,fill=white] (7.95,2.5) circle (0.15);
					\draw[thick,fill=white] (2.05,3.2) circle (0.15);
					\draw[thick,fill=white] (7.95,3.2) circle (0.15);
					
					\draw[thick, red ,fill=red] (2.05,4.55) circle (0.15);
					\draw[thick, red,fill=red] (2.05,1.25) circle (0.15);
					\draw[thick, red ,fill=red] (7.95,4.55) circle (0.15);
					\draw[thick, red,fill=red] (7.95,1.25) circle (0.15);
					\draw[thick, red,fill=red] (5,2.5) circle (0.15);
					\draw[thick, red,fill=red] (5,3.2) circle (0.15);

					\draw (1.4,3.3) node {$p'_{1}$};
					\draw (8.65,3.3) node {$p'_2$};
					\draw (3.3,3.6) node {$\gamma'$};
					\draw (1.4,2.5) node {$p''_{1}$};
					\draw (8.65,2.5) node {$p''_2$};
					\draw (3.3,2) node {$\gamma''$};
				    \draw (5,3.8) node {$q'$};
				    \draw (5,1.9) node {$q''$};

				\end{tikzpicture}
    
    Case I. $p_{1} \neq p_{2} \in \mathcal{M}_\bpoint$

\begin{tikzpicture}    [>=stealth,scale=.6]
					
					\draw [ gray!60, line width=4pt] (0,-0.15) to (9,-0.15);
					\draw[line width=1pt]plot [smooth,tension=1] coordinates {(4.5,0) (2.5,2.5) (4.5,4.5) (6.5,2.5)(4.5,0)};

					\draw [line width=2.5pt] (0,0) to (9,0);
                    \draw[thick,fill=white] (4.5,0) circle (0.15);
                    
                    \draw[thick, red ,fill=red] (1.5,0) circle (0.15);
                    \draw[thick, red ,fill=red] (7.5,0) circle (0.15);

                 	\draw (2,2.5) node {$\gamma$};
                    \draw (4.5,.6) node {$p$};
                    \draw (11,1) node {};
\end{tikzpicture}
\begin{tikzpicture}   [>=stealth,scale=.6]
					
					\draw [ gray!60, line width=4pt] (0,-0.15) to (6,-0.15);
					\draw[ gray!60, line width=4pt]plot [smooth,tension=1] coordinates {(8.5,-1.12) (6.85,1.5) (8.5,3.15) (10.15,1.5)(8.5,-1.12)};
					\draw[line width=2.5pt]plot [smooth,tension=1] coordinates {(8.5,-1) (7,1.5) (8.5,3) (10,1.5)(8.5,-1)};

					\draw [line width=2.5pt] (0,0) to (6,0);
					\draw[thick,fill=white] (2,0) circle (0.15);
					\draw[thick,fill=white] (4,0) circle (0.15);
					\draw[thick,fill=white] (8.5,-1) circle (0.15);
					\draw[thick, red ,fill=red] (1,0) circle (0.15);
					\draw[thick, red ,fill=red] (3,0) circle (0.15);
					\draw[thick, red ,fill=red] (5,0) circle (0.15);
					\draw[thick, red ,fill=red] (8.5,3) circle (0.15);
					
					\draw (2,-0.7) node {$p'_{1}$};
					\draw (3,-0.7) node {$q'$};
					\draw (4,-0.7) node {$p'_{2}$};
					\draw (.6,0.6) node {$\gamma'$};
					\draw (7.5,1.5) node {$\gamma''$};
					\draw (8.6,-.2) node {$p''$};
					\draw (8.5,3.7) node {$q''$};
				\end{tikzpicture}
				
  Case II. $p := p_{1} = p_{2} \in \mathcal{M}_{\bpoint} $

\begin{tikzpicture}[>=stealth,scale=.7]

					\draw [line width=1pt] (2.05,2.5) to (7.95,2.5);
					
					\draw [bend left, gray!60, line width=4pt] (1.2,4.92) to (1.2,0.08);

					\draw [bend left, line width=2.5pt] (1.3,5) to (1.3,0);

					\draw[thick,fill=white] (2.05,2.5) circle (0.15);
					\draw[thick,fill=white] (7.95,2.5) circle (0.15);
					\draw[thick, red ,fill=red] (1.86,3.75) circle (0.15);
					\draw[thick, red,fill=red] (1.86,1.25) circle (0.15);

					\draw (1.5,2.5) node {$p_1$};
					\draw (8.55,2.5) node {$p_2$};
					\draw (3,2.8) node {$\gamma$};
\end{tikzpicture}
\begin{tikzpicture}[>=stealth,scale=.7]
					
					\draw [ gray!60, line width=4pt] (0,-0.15) to (8,-0.15);

					\draw [line width=2.5pt] (0,0) to (8,0);
					\draw[thick,fill=white] (2,0) circle (0.15);
					\draw[thick,fill=white] (4,0) circle (0.15);
					\draw[thick,fill=white] (6,0) circle (0.15);
					
					\draw[thick, red ,fill=red] (1,0) circle (0.15);
					\draw[thick, red ,fill=red] (3,0) circle (0.15);
					\draw[thick, red ,fill=red] (5,0) circle (0.15);
					\draw[thick, red ,fill=red] (7,0) circle (0.15);

					\draw (2,-0.6) node {$p'_{1}$};
					\draw (3,-0.6) node {$q'$};
					\draw (4,-0.6) node {$p_{2}$};
					\draw (5,-0.6) node {$q''$};
					\draw (6,-0.6) node {$p''_{1}$};
					\draw (3,0.6) node {$\gamma'$};
					\draw (5,0.6) node {$\gamma''$};
\draw (5,-2.7) node {};
\end{tikzpicture}

Case III. $p_{1}\in \mathcal{M}_{0},   p_{2} \in \mathcal{P}_{\bpoint}  $

\begin{tikzpicture}[>=stealth,scale=.7]

					\draw [line width=1pt] (2.05,2.5) to (7.95,2.5);

					\draw[thick,fill=white] (2.05,2.5) circle (0.15);
					\draw[thick,fill=white] (7.95,2.5) circle (0.15);

					\draw (1.3,2.5) node {$p_1$};
					\draw (8.75,2.5) node {$p_2$};
					\draw (5,3.1) node {$\gamma$};
\draw (10.5,1) node {};
\end{tikzpicture}
\begin{tikzpicture}[>=stealth,scale=.4]
					
				
					\draw[line width=2.5pt,fill=gray!50]  (0,0) ellipse[x radius=4, y radius=2.5];

					\draw[thick,fill=white] (-4,0) circle (0.25);
					\draw[thick,fill=white] (4,0)  circle (0.25);
					\draw[thick,red, fill=red] (0,-2.5) circle (0.25);
					\draw[thick,red,fill=red] (0,2.5) circle (0.25);

				    \draw (-4.9,0) node {$p_1$};
				    \draw (4.9,0) node {$p_2$};
				    \draw (0,3.4) node {$q'$};
				    \draw (0,-3.4) node {$q''$};
				    \draw (-2,2.9) node {$\gamma'$};
				    \draw (-2,-2.9) node {$\gamma''$};
				\end{tikzpicture}
    
Case IV. $p_{1} \neq p_{2} \in \mathcal{P}_{\bpoint} $

\begin{tikzpicture}[>=stealth,scale=.5]
					
					\draw[line width=1pt]plot [smooth,tension=1] coordinates {(4.5,0) (2.5,2.5) (4.5,4.5) (6.5,2.5)(4.5,0)};

					\draw[thick,fill=white] (4.5,0) circle (0.15);

					\draw (2,2.4) node {$\gamma$};				
				
					\draw (4.5,-0.6) node {$p$};
					\draw (11,-0.6) node {};					
				\end{tikzpicture}
\begin{tikzpicture}[>=stealth,scale=.5]
			\draw[line width=2.5pt,fill=gray!50]plot [smooth,tension=1] coordinates {(1.5,-1) (-0.15,1.5) (1.5,3.15) (3.15,1.5)(1.5,-1)};
					\draw[ gray!60, line width=6.8pt]plot [smooth,tension=1] coordinates {(8.5,-1.12) (6.85,1.5) (8.5,3.15) (10.15,1.5)(8.5,-1.12)};
					\draw[line width=2.5pt]plot [smooth,tension=1] coordinates {(8.5,-1) (7,1.5) (8.5,3) (10,1.5)(8.5,-1)};

					\draw[thick,fill=white] (1.5,-1) circle (0.15);
					\draw[thick,fill=white] (8.5,-1) circle (0.15);
					\draw[thick, red ,fill=red] (8.5,3) circle (0.15);
					\draw[thick, red ,fill=red] (1.5,3.15) circle (0.15);

					\draw (6,1.5) node {$\gamma''$};
					\draw (-0.7,1.5) node {$\gamma'$};
					\draw (8.5,-1.7) node {$p''$};
					\draw (8.5,3.8) node {$q''$};
					\draw (1.5,-1.6) node {$p'$};
					\draw (1.5,3.8) node {$q'$};
										
				\end{tikzpicture}

Case V. $p := p_{1} = p_{2} \in \mathcal{P}_{\bpoint}$
	
\caption{For a simple $\bpoint$-arc $\zg$ on $(\cals,\calm)$, the surface $\cals_\zg$ is obtained by cutting $\cals$ along $\zg$. The set $\calm_\zg$ is defined as 
$\calm_\zg=\calm\setminus \{p_1,p_2\}\cup\{p_1',p_2',p_1'',p_2''\}\cup\{q',q''\}$, where some points may coincide. There are five cases, depending on the positions of the endpoints of $\zg$. In each case, the number of marked $\bpoint$-points from $\calm_\bpoint$ (as well as $\rpoint$-points) always adds two.} 
	\label{table:list} 
\end{figure}

Let $(\cals,\calm,\zD^*)$ be a marked surface with a simple coordinate, and let $\zg$ be a zigzag $\bpoint$-arc on the surface. In the following, we give a direct construction of the gentle algebra $A_\zg$ associated with the cutting surface $(\cals_\zg,\calm_\zg,\zD^*_\zg)$ under an assumption that $\zg$ intersects each $\rpoint$-arc in $\zD^*$ at most once.
The proof is based on a direct verification using the definition of cutting surface, comparing Figure \ref{fig:boundarc}.
It seems interesting to give a combinatorial characterization of the algebra obtained by cutting the surface along a general zigzag $\bpoint$-arc.

Denote by $\omega=\omega_1\omega_2\cdots\omega_m$ the walk associated to $\zg$. In the following we consider the case that $m$ is even and $\omega_{2j}=u_j$, $\omega_{2j-1}=u_j^{-1}$ for $1\leq j \leq m/2$, where each $u_j$ is a non-trivial direct string. 
The construction for other cases is similar. 
Denote by $x_{j-1}=s(\omega_j)=t(\omega_{j-1})$ for each $1 \leq j \leq m$ and denote by $x_m=t(\omega_m)$. 
Let $A=kQ/I$ be the gentle algebra given by $\zD^*$. We construct a quiver $\widehat{Q}$ with relations $\widehat{I}$ in the following way:

(1) the vertex set $\widehat{Q}_0$ is a union of the following two sets:

$\bullet$ $\widehat{Q}'_0=Q_0\setminus\{x_j, 1\leq j \leq m-1\}$;

$\bullet$ $\widehat{Q}''_0=\{\widehat{x}_j, 1\leq j \leq m\}$.

(2) the arrow set $\widehat{Q}_1$ is a union of the following three sets:

$\bullet$ $\widehat{Q}'_1=Q_1\setminus\{\aaa ~|~ s(\aaa)=x_j \textrm{~or~} t(\aaa)=x_j, 1\leq j \leq m-1, \textrm{~or~} u_1\aaa \neq 0, \textrm{~or~} u_m \aaa\neq 0 \}$;

$\bullet$
$\widehat{Q}''_1=\{\widehat{\aaa}: x\rightarrow \widehat{x}_{i+1} ~|~ \aaa : x\rightarrow x_i \textrm{~with~} \aaa u_{i+1}\neq 0\}\cup$

$\hspace{1.32cm}\{\widehat{\aaa}: \widehat{x}_{i+1}\rightarrow x ~|~ \aaa : x_{i+1}\rightarrow x  \textrm{~with~} u_{i+1}\aaa\neq 0\}\cup$

$\hspace{1.32cm}\{\widehat{\aaa}: x\rightarrow \widehat{x}_{i} ~|~ \aaa : x\rightarrow x_i \textrm{~with~} \aaa u_{i}\neq 0\}\cup$

$\hspace{1.32cm}\{\widehat{\aaa}: \widehat{x}_{i} \rightarrow x ~|~ \aaa : x_{i-1}\rightarrow x \textrm{~with~} u_{i-1}\aaa \neq 0\}$,

$\hspace{1.32cm}$where $i=2j-1$ for $1\leq j \leq m/2$;

$\bullet$
$\widehat{Q}'''_1=\{\widehat{\bbb}: \widehat{x}_{i+1}\rightarrow x ~|~ \bbb : x_i\rightarrow x \textrm{~with~} u_{i}=\bbb u'_{i}\}\cup$

$\hspace{1.32cm}\{\widehat{\bbb}: \widehat{x}_{i}\rightarrow x ~|~ \bbb : x_i\rightarrow x \textrm{~with~} u_{i+1}=\bbb u'_{i+1}\}\cup$

$\hspace{1.32cm}\{\widehat{\bbb}: x\rightarrow \widehat{x}_{i+2} ~|~ \bbb : x\rightarrow x_{i+1} \textrm{~with~} u_{i+1}=u'_{i+1}\bbb\}\cup$

$\hspace{1.32cm}\{\widehat{\bbb}: x\rightarrow \widehat{x}_{i+1} ~|~ \bbb : x\rightarrow x_{i+1} \textrm{~with~} u_{i+2}=u'_{i+2}\bbb\}$,

$\hspace{1.32cm}$where $i=2j-1$ for $1\leq j \leq m/2$.

(3) the relation set $\widehat{I}$ is a union of the following four sets:

$\bullet$ $\widehat{I}'=\{\ccc_1\ccc_2 ~|~ \ccc_1, \ccc_2 \in \widehat{Q}'_1 \textrm{~with~} \ccc_1\ccc_2\in I\}$;

$\bullet$ $\widehat{I}''=\{\widehat{\aaa}\widehat{\bbb} \textrm{~or~}\widehat{\bbb}\widehat{\aaa} ~|~ \widehat{\aaa}\in \widehat{Q}''_1, \widehat{\bbb}\in \widehat{Q}'''_1 \textrm{~with~} \aaa\bbb\in I \textrm{~or~}\bbb\aaa\in I\}$;

$\bullet$ $\widehat{I}'''=\{\widehat{\aaa}\ccc \textrm{~or~} \ccc\widehat{\aaa} ~|~ \widehat{\aaa}\in \widehat{Q}''_1, \ccc\in \widehat{Q}'_1 \textrm{~with~} \aaa\ccc\in I \textrm{~or~} \ccc\aaa\in I \}$;

$\bullet$ $\widehat{I}''''=\{\widehat{\bbb}\ccc \textrm{~or~} \ccc\widehat{\aaa} ~|~ \widehat{\bbb}\in \widehat{Q}'''_1, \ccc\in \widehat{Q}'_1 \textrm{~with~} \aaa\ccc\in I \textrm{~or~} \ccc\bbb\in I \}$.

Then the gentle algebra $A_\zg$ associated with $\zD^*_\zg$ is isomorphic to $k\widehat{Q}/\widehat{I}$.

See Figure \ref{fig:apd1}, \ref{fig:apd2}, \ref{fig:apd3}, and \ref{fig:apd4} for an example of above constructions. Figure \ref{fig:apd1} shows a marked surface with a simple coordinate $(\cals,\calm,\zD^*)$ and a simple zigzag $\bpoint$-arc $\zg$. The associated gentle algebra is given in Figure \ref{fig:apd2}. Figure \ref{fig:apd3} depicts the cutting surface obtained by cutting $(\cals,\calm,\zD^*)$ along $\zg$, where the associated gentle algebra is given in Figure \ref{fig:apd4}.

\begin{figure}
	\begin{center}
		\begin{tikzpicture}[>=stealth,scale=0.8]
			\draw[line width=1.8pt,fill=white] (0,0) circle (7.5cm);
			\draw[line width=1.8pt,fill=gray!30] (0,2.5) circle (2cm);
			\draw[line width=1.8pt,fill=gray!30] (0,-2.5) circle (1.5cm);
	
            \path 
			    (90:7.5) coordinate (r1)
				(70:7.5) coordinate (r2)
				(40:7.5) coordinate (r3)
				(0:7.5) coordinate (r4)
				(-20:7.5) coordinate (r5)
				(-40:7.5) coordinate (r6)
				(-70:7.5) coordinate (r7)
				(-90:7.5) coordinate (r8)
				(-128:7.5) coordinate (r9)
				(-150:7.5) coordinate (r10)
				(-160:7.5) coordinate (r11)
				(180:7.5) coordinate (r12)
				(160:7.5) coordinate (r13)
				(150:7.5) coordinate (r14)
				(140:7.5) coordinate (r15)
				(120:7.5) coordinate (r16)
				(80:7.5) coordinate (b1)
				(55:7.5) coordinate (b2)
				(20:7.5) coordinate (b3)
				(-10:7.5) coordinate (b4)
				(-30:7.5) coordinate (b5)
				(-55:7.5) coordinate (b6)
				(-80:7.5) coordinate (b7)
				(-115:7.5) coordinate (b8)
				(-142:7.5) coordinate (b9)
				(-155:7.5) coordinate (b10)
				(-170:7.5) coordinate (b11)
				(170:7.5) coordinate (b12)
				(155:7.5) coordinate (b13)
				(145:7.5) coordinate (b14)
				(130:7.5) coordinate (b15)
				(105:7.5) coordinate (b16)

				(90:4.5) coordinate (bb1)
				(73:4.25) coordinate (bb2)
				(58:3.6) coordinate (bb3)
				(44:2.6) coordinate (bb4)
				(40:1.1) coordinate (bb5)
				(142:1.3) coordinate (bb6)
				(129:3.2) coordinate (bb7)
				(117:3.9) coordinate (bb8)
				(105:4.3) coordinate (bb9)
				
				(-53:2) coordinate (bbb1)
				(-70:3.6) coordinate (bbb2)
				(-125:2.5) coordinate (bbb3)
				
				(81:4.4) coordinate (rr1)
				(66:4.05) coordinate (rr2)
				(50:3.1) coordinate (rr3)
				(38:1.9) coordinate (rr4)
				(95:0.5) coordinate (rr5)
				(138:2.3) coordinate (rr6)
				(123:3.6) coordinate (rr7)
				(111:4.1) coordinate (rr8)
				(98:4.45) coordinate (rr9)
				(-65:1.2) coordinate (rrr1)
				(-58:2.8) coordinate (rrr2)
				(-83:3.95) coordinate (rrr3)
				;

				\draw[cyan,line width=1pt,red] (rr8) to[out=110,in=90](rr9);
				\draw[cyan,line width=1pt,red] (rr8) to[out=90,in=-130](r1);
				\draw[cyan,line width=1pt,red] (r2) to[out=180,in=-30](r1);
				\draw[cyan,line width=1pt,red] (r16) to[out=0,in=-150](r1);
				\draw[cyan,line width=1pt,red] (r1) to[out=-35,in=50](rr3);
				\draw[cyan,line width=1pt,red] (rr2) to[out=0,in=50](rr3);
				\draw[cyan,line width=1pt,red] (r5) to[out=120,in=-135](r4);
				\draw[cyan,line width=1pt,red] (r5) to[out=180,in=90](r6);
				\draw[cyan,line width=1pt,red] (rr5) to[out=-170,in=130](r8);
				\draw[cyan,line width=1pt,red] (rrr1) to[out=30,in=30](rrr2);
				\draw[cyan,line width=1pt,red] (rrr2) to[out=-30,in=-30](rrr3);
				\draw[cyan,line width=1pt,red] (rrr2) to[out=-20,in=90](r8);
				\draw[cyan,line width=1pt,red] (r8) to[out=20,in=160](r7);
				\draw[cyan,line width=1pt,red] (rr6) to[out=-135,in=160](r8);
				\draw[cyan,line width=1pt,red] (rr6) to[out=-145,in=90](r9);
				\draw[cyan,line width=1pt,red] (rr6) to[out=-155,in=50](r11);
				\draw[cyan,line width=1pt,red] (r10) to[out=90,in=0](r11);
				\draw[cyan,line width=1pt,red] (r12) to[out=-30,in=50](r11);
				\draw[cyan,line width=1pt,red] (r14) to[out=-40,in=0](r13);
				\draw[cyan,line width=1pt,red] (r14) to[out=0,in=-70](r15);
				\draw[cyan,line width=1pt,red] (rr6) to[out=140,in=-150](rr7);

				\draw [red, line width=1pt] (r1) to (rr1);
				\draw [red, line width=1pt] (rr3) to (r3);
				\draw [red, line width=1pt] (rr3) to (r5);
				\draw [red, line width=1pt] (rrr1) to (rr5);
				\draw [red, line width=1pt] (r14) to (rr6);

				\draw[red,line width=1pt]plot [smooth,tension=0.6] coordinates {(rr3) (3.5,-1)(3,-5) (r8)};
				\draw[red,line width=1pt]plot [smooth,tension=0.6] coordinates {(rr4) (3.2,-1)(2.2,-5) (r8)};
				\draw[red,line width=1pt]plot [smooth,tension=0.6] coordinates {(rr6) (-3.2,3.3) (-1.8,6)  (r1)};
				
				\draw[black,line width=1.5pt]plot [smooth,tension=0.6] coordinates {(b9) (-5.2,0) (-3.5,3.5) (0,5.7)  (3,4) (5.5,0)(4,-3)(0,-6)(-2.5,-4)(-2.4,-1) (bb6)};

				\draw[red] (-1,-5) node {{ $x_{0}$}};
				\draw[red] (3.4,-5) node {{$x_{1}$}};
				\draw[red] (2.5,5.2) node {{ $x_{2}$}};
				\draw[red] (-2.8,5.2) node {{ $x_{3}$}};
				\draw[red] (-4.5,0.3) node {{ $x_{4}$}};
				
				\draw[red] (-3,-5) node {{ ${z}$}};
				
				\draw[red] (0.7,-5) node {{ $y_{1}$}};
				\draw[red] (2.8,-1) node {{ $y_{2}$}};
				\draw[red] (4.9,-1) node {{ $y_{3}$}};
				\draw[red] (4.3,4.3) node {{ $y_{4}$}};
				\draw[red] (0.2,5.2) node {{ $y_{5}$}};
				\draw[red] (-1.5,5.5) node {{ $y_{6}$}};
				\draw[red] (-5,2.7) node {{ $y_{7}$}};
				\draw[] (5.9,0) node {{ $\zg$}};

				\draw[black,thick,bend right,->,>=stealth] (-1,-6) to node[above ]{\tiny$\mathbf{a_{1}}$} (-1.6,-6.5);
				\draw[black,thick,bend right,->,>=stealth] (0.1,-6.9) to node[above ]{\tiny$\mathbf{c_2}$} (-0.4,-7);
				\draw[black,thick,bend right,->,>=stealth] (0.7,-6.9) to node[above right]{\tiny$\mathbf{c_{4}}$} (0.2,-6.4);
				\draw[black,thick,bend right,->,>=stealth] (2,-6.1) to node[above right]{\tiny$\mathbf{b_{1}}$} (1.7,-5.7);
				\draw[black,thick,bend right,->,>=stealth] (2,-6.9) to node[ right]{\tiny$\mathbf{a_{2}}$} (1.7,-6.3);
				\draw[black,thick,bend right,->,>=stealth] (6.8,-1.5) to node[above left]{\tiny$\mathbf{c_{8}}$} (6.5,-2);
				\draw[black,thick,bend right,->,>=stealth] (6.5,-2) to node[ left]{\tiny$\mathbf{c_{7}}$} (6.5,-2.7);
				\draw[black,thick,bend right,->,>=stealth] (1.2,6.5) to node[ right]{\tiny$\mathbf{a_{3}}$} (1.5,7.1);
				\draw[black,thick,bend right,->,>=stealth] (0.15,6.8) to node[ below right]{\tiny$\mathbf{b_{4}}$} (0.7,7);
				\draw[black,thick,bend right,->,>=stealth] (-0.5,6.8) to node[ below ]{\tiny$\mathbf{c_{11}}$} (0.2,6.8);
				\draw[black,thick,bend right,->,>=stealth] (-1.3,6.5) to node[ below left  ]{\tiny$\mathbf{b_{5}}$} (-0.8,6.3);
				\draw[black,thick,bend right,->,>=stealth] (-1.8,6.7) to node[ below left  ]{\tiny$\mathbf{a_{6}}$} (-1.5,6.3);
				\draw[black,thick,bend right,->,>=stealth] (-6,3.5) to node[ right  ]{\tiny$\mathbf{c_{12}}$} (-5.8,4.2);
				\draw[black,thick,bend right,->,>=stealth] (-6.45,3) to node[ below right  ]{\tiny$\mathbf{c_{13}}$} (-6,3.5);
				\draw[black,thick,bend right,->,>=stealth] (-6.2,-1.6) to node[ above ]{\tiny$\mathbf{c_{15}}$} (-6.7,-1.3);
				\draw[black,thick,bend right,->,>=stealth] (-6.7,-2.7) to node[right ]{\tiny$\mathbf{c_{16}}$} (-6.7,-2.2);

				\draw[black,thick,bend right,->,>=stealth] (1,-1.1) to node[above  ]{\tiny$\mathbf{c_{6}}$} (0.4,-0.8);
				\draw[black,thick,bend right,->,>=stealth] (1.9,-2.6) to node[right  ]{\tiny$\mathbf{c_{5}}$} (1.65,-2);
				\draw[black,thick,bend right,->,>=stealth] (1.6,-3.2) to node[below  ]{\tiny$\mathbf{c_{1}}$} (2.1,-3);

				\draw[black,thick,bend right,->,>=stealth] (-0.7,0.3) to node[below  ]{\tiny$\mathbf{c_{3}}$} (0.1,0.1);
				\draw[black,thick,bend right,->,>=stealth] (2.6,1.3) to node[below right  ]{\tiny$\mathbf{b_{2}}$} (3,1.4);
				\draw[black,thick,bend right,->,>=stealth] (2.6,1.8) to node[ right  ]{\tiny$\mathbf{c_{9}}$} (2.6,2.8);
				\draw[black,thick,bend right,->,>=stealth] (2.6,2.8) to node[above right  ]{\tiny$\mathbf{b_{3}}$} (2.4,3.2);
				\draw[black,thick,bend right,->,>=stealth] (2.5,3.6) to node[above  ]{\tiny$\mathbf{a_{4}}$} (2,3.5);
				\draw[black,thick,bend right,->,>=stealth] (-0.9,4.5) to node[above right ]{\tiny$\mathbf{c_{10}}$} (-1.4,4.5);
				\draw[black,thick,bend right,->,>=stealth] (-2.25,2.5) to node[above ]{\tiny$\mathbf{a_{5}}$} (-2.8,2.5);
				\draw[black,thick,bend right,->,>=stealth] (-2.8,2.5) to node[above left ]{\tiny$\mathbf{b_{6}}$} (-3.1,2.2);
				\draw[black,thick,bend right,->,>=stealth] (-2.4,1.85) to node[ left ]{\tiny$\mathbf{c_{14}}$} (-2.4,1.2);
				\draw[black,thick,bend right,->,>=stealth] (-3,0.9) to node[below left ]{\tiny$\mathbf{a_{7}}$} (-2.78,0.55);
				\draw[black,thick,bend right,->,>=stealth] (-3.5,-0.3)to node[below ]{\tiny$\mathbf{c_{17}}$} (-2.9,-0.6);
				
\draw[thick,fill=white] 
(b1) circle (0.08cm)
(b2) circle (0.08cm)
(b3) circle (0.08cm)
(b4) circle (0.08cm)
(b5) circle (0.08cm)
(b6) circle (0.08cm)
(b7) circle (0.08cm)
(b8) circle (0.08cm)
(b9) circle (0.08cm)
(b10) circle (0.08cm)
(b11) circle (0.08cm)
(b12) circle (0.08cm)
(b13) circle (0.08cm)
(b14) circle (0.08cm)
(b15) circle (0.08cm)
(b16) circle (0.08cm)
(bb1) circle (0.08cm)
(bb2) circle (0.08cm)
(bb3) circle (0.08cm)
(bb4) circle (0.08cm)
(bb5) circle (0.08cm)
(bb6) circle (0.08cm)
(bb7) circle (0.08cm)
(bb8) circle (0.08cm)
(bb9) circle (0.08cm)
(bbb1) circle (0.08cm)
(bbb2) circle (0.08cm)
(bbb3) circle (0.08cm);	

\draw[thick,red, fill=red] 
(r1) circle (0.08cm)
(r2) circle (0.08cm)
(r3) circle (0.08cm)
(r4) circle (0.08cm)
(r5) circle (0.08cm)
(r6) circle (0.08cm)
(r7) circle (0.08cm)
(r8) circle (0.08cm)
(r9) circle (0.08cm)
(r10) circle (0.08cm)
(r11) circle (0.08cm)
(r12) circle (0.08cm)
(r13) circle (0.08cm)
(r14) circle (0.08cm)
(r15) circle (0.08cm)
(r16) circle (0.08cm)
(rr1) circle (0.08cm)
(rr2) circle (0.08cm)
(rr3) circle (0.08cm)
(rr4) circle (0.08cm)
(rr5) circle (0.08cm)
(rr6) circle (0.08cm)
(rr7) circle (0.08cm)
(rr8) circle (0.08cm)
(rr9) circle (0.08cm)
(rrr1) circle (0.08cm)
(rrr2) circle (0.08cm)
(rrr3) circle (0.08cm)
;
			
\end{tikzpicture}
\end{center}
\caption{A marked surface $(\cals,\calm,\zD^*)$ with a simple zigzag $\bpoint$-arc $\zg$.} 
	\label{fig:apd1} 
\end{figure}

\begin{figure}
\begin{center}

\begin{tikzpicture}[>=stealth, scale=1]
				\path 
				(-7,-1) coordinate (1)
				(-6,0) coordinate (2)
				(-6,2)  coordinate (3)
				(-5,-1)  coordinate (4)
				(-5,1) coordinate (5)
				(-4,0) coordinate (6)
			    (-4,2) coordinate (7)
			    (-3,1) coordinate (8)
			    (-3,3) coordinate (9)
			    (-2,2)  coordinate (10)
			    (-2,4)  coordinate (11)
			   (-1,-1) coordinate (12)
			   (-1,1) coordinate (13)
			   (-1,3) coordinate (14)
			   (0,0) coordinate (15)
			   (1,-1) coordinate (16)
			   (1,1)  coordinate (17)
			   (1,3)  coordinate (18)
			   (2,2) coordinate (19)
			   (2,4) coordinate (20)
			   (3,1) coordinate (21)
			   (3,3) coordinate (22)
			   (4,0)  coordinate (23)
			   (4,2)  coordinate (24)
			   (4,4) coordinate (25)
			   (5,1) coordinate (26)
			   (5,3) coordinate (27)
			   (6,0) coordinate (28)
			   (6,2) coordinate (29)
			   (7,-1) coordinate (30)
			   
			   ;

			   \draw 
			   (1) node {$z$}
			   (2) node {$x_0$}
			   (3) node {$\bullet$}
			   (4) node {$\bullet$}
			   (5) node {$y_1$}
			   (6) node {$\bullet$}
			   (7) node {$y_2$}
			   (8) node {$\bullet$}
			   (9) node {$x_1$}
			   (10) node {$y_3$}
			   (11) node {$\bullet$}
			   (12) node {$\bullet$}
			   (13) node {$y_4$}
			   (14) node {$\bullet$}
			   (15) node {$x_2$}
			   (16) node {$\bullet$}
			   (17) node {$y_5$}
			   (18) node {$\bullet$}
			   (19) node {$y_6$}
			   (20) node {$\bullet$}
			   (21) node {$\bullet$}
			   (22) node {$x_3$}
			   (23) node {$\bullet$}
			   (24) node {$y_7$}
			   (25) node {$\bullet$}
			   (26) node {$x_4$}
			   (27) node {$\bullet$}
			   (28) node {$\bullet$}
			   (29) node {$\bullet$}
			   (30) node {$z$};

			   \draw [thick,->] (-6.1,-0.1) to node[above left ]{$\aaa_1$}(-6.9,-0.9);
			   \draw [thick,->] (-5.2,0.8) to node[above left ]{$\ccc_2$}(-5.85,0.15);
			   \draw [thick,->] (-5.8,-0.2) to node[ right ]{$\ccc_3$}(-5.1,-0.9);
			   \draw [thick,->] (-4.1,-0.1) to node[ right ]{$\ccc_6$}(-4.9,-0.9);
			   \draw [thick,->] (-4.8,0.8) to node[ left ]{$\ccc_5$}(-4.1,0.1);
			   \draw [thick,->] (-5.9,1.9) to node[ left ]{$\ccc_1$}(-5.2,1.2);
			   \draw [thick,->] (-4.2,1.8) to node[above left ]{$\ccc_4$}(-4.8,1.2);
			   \draw [thick,->] (-3.2,2.8) to node[ above left ]{$\bbb_1$}(-3.8,2.2);
			   \draw [thick,->] (-2.2,3.8) to node[ above left ]{$\aaa_2$}(-2.8,3.2);
			   \draw [thick,->] (-2.8,2.8) to node[ above right ]{$\bbb_2$}(-2.2,2.2);
			   \draw [thick,->] (-2.2,1.8) to node[ above left ]{$\ccc_7$}(-2.8,1.2);
			   \draw [thick,->] (-1.2,2.8) to node[ below right ]{$\ccc_8$}(-1.8,2.2);
		    	\draw [thick,->] (-1.8,1.8) to node[ below left ]{$\ccc_9$}(-1.2,1.2);
				\draw [thick,->] (-0.8,0.8) to node[ below left ]{$\bbb_3$}(-0.2,0.2);
			    \draw [thick,->] (-0.2,-0.2) to node[ below right ]{$\aaa_3$}(-0.8,-0.8);
				\draw [thick,->] (0.2,-0.2) to node[ above right ]{$\aaa_4$}(0.8,-0.8);
				\draw [thick,->] (0.8,0.8) to node[ above left ]{$\bbb_4$}(0.2, 0.2);
				\draw [thick,->] (1.8,1.8) to node[ above left ]{$\ccc_{11}$}(1.2, 1.2);
			    \draw [thick,->] (2.8,2.8) to node[ above left ]{$\bbb_{5}$}(2.2, 2.2);
				\draw [thick,->] (3.8,3.8) to node[ above left ]{$\aaa_{6}$}(3.2, 3.2);
				\draw [thick,->] (1.2,2.8) to node[left ]{$\ccc_{10}$}(1.8, 2.2);
				\draw [thick,->] (2.2,3.8) to node[left ]{$\aaa_{5}$}(2.8, 3.2);
				\draw [thick,->] (3.2,2.8) to node[above right]{$\bbb_{6}$}(3.8, 2.2);
				\draw [thick,->] (3.8,1.8) to node[above left]{$\ccc_{12}$}(3.2, 1.2);
				\draw [thick,->] (4.8,2.8) to node[below right]{$\ccc_{13}$}(4.2, 2.2);
				\draw [thick,->] (4.2,1.8) to node[below left]{$\ccc_{14}$}(4.8, 1.2);
				\draw [thick,->] (5.8, 1.8) to node[below right]{$\ccc_{16}$}(5.2,1.2);
				\draw [thick,->] (4.8, 0.8) to node[below right]{$\ccc_{15}$}(4.2,0.2);
				\draw [thick,->] (5.2, 0.8) to node[right]{$\aaa_{7}$}(5.8,0.2);
				\draw [thick,->] (6.2, -0.2) to node[above right]{$\ccc_{17}$}(6.8,-0.8);
			
\end{tikzpicture}
\end{center}
\caption{The gentle algebra associated with the marked surface $(\cals,\calm,\zD^*)$ given in Figure \ref{fig:apd1}, where the composition of two arrows belongs to the ideal if and only if they are not on a straight line, that is there is a corner at the common vertex of the arrows.
The walk associated with $\zg$ is $\omega=(\ccc_2^{-1}\ccc_4^{-1}\bbb_1^{-1})(\bbb_2
\ccc_9\bbb_3)(\bbb^{-1}_4\ccc_{11}^{-1}\bbb_5^{-1})(\bbb_6\ccc_{14})$. One can construct the algebra $A_\zg$ by the general rule described above, which is depicted in Figure \ref{fig:apd4}.} 
	\label{fig:apd2} 
	\end{figure}

\begin{figure}
	\begin{center}
		\begin{tikzpicture}[>=stealth,scale=0.8]
			\draw[line width=1.8pt,fill=white] (0,0) circle (7.5cm);
			\draw[line width=1.8pt,fill=gray!30] (3.8,-4.3) circle (0.7cm);

			\path 
			(95:7.5) coordinate (r1)
			(85:7.5) coordinate (r2)
			(75:7.5) coordinate (r3)
			(65:7.5) coordinate (r4)
			(55:7.5) coordinate (r5)
			(40:7.5) coordinate (r6)
			(25:7.5) coordinate (r7)
			(5:7.5) coordinate (r8)
			(-60:7.5) coordinate (r9)
			(-90:7.5) coordinate (r10)
			(-110:7.5) coordinate (r11)
			(-125:7.5) coordinate (r12)
			(-132:7.5) coordinate (r13)
			(-140:7.5) coordinate (r14)
			(-152:7.5) coordinate (r15)
			(-160:7.5) coordinate (r16)
			(-172:7.5) coordinate (r17)
			(180:7.5) coordinate (r18)
			(172:7.5) coordinate (r19)
			(164:7.5) coordinate (r20)
			(156:7.5) coordinate (r21)
			(148:7.5) coordinate (r22)
			(140:7.5) coordinate (r23)
			(132:7.5) coordinate (r24)
			(124:7.5) coordinate (r25)
			(116:7.5) coordinate (r26)
			(108:7.5) coordinate (r27)
			(-43:5.3) coordinate (r28)
			(-43.5:6.2) coordinate (r29)
			(-55.5:5.8) coordinate (r30)

			(90:7.5) coordinate (b1)
			(80:7.5) coordinate (b2)
			(70:7.5) coordinate (b3)
			(60:7.5) coordinate (b4)
			(48:7.5) coordinate (b5)
			(32:7.5) coordinate (b6)
			(15:7.5) coordinate (b7)
			(-30:7.5) coordinate (b8)
			(-75:7.5) coordinate (b9)
			(-100:7.5) coordinate (b10)
			(-117:7.5) coordinate (b11)
			(-125:7.5) coordinate (b12)
			(-128.5:7.5) coordinate (b13)
			(-136:7.5) coordinate (b14)
			(-156:7.5) coordinate (b15)
			(-166:7.5) coordinate (b16)
			(-176:7.5) coordinate (b17)
			(176:7.5) coordinate (b18)
			(168:7.5) coordinate (b19)
			(160:7.5) coordinate (b20)
			(152:7.5) coordinate (b21)
			(144:7.5) coordinate (b22)
			(136:7.5) coordinate (b23)
			(128:7.5) coordinate (b24)
			(120:7.5) coordinate (b25)
			(112:7.5) coordinate (b26)
			(102:7.5) coordinate (b27)
			(-41.5:5.8) coordinate (b28)
			(-50:6.42) coordinate (b29)
			(-52:5.14) coordinate (b30)
			;
	
			\draw[thick,black, fill=white]
			(b1) circle (0.10cm)
			(b2) circle (.10cm)
			(b3) circle (.10cm)
			(b4) circle (.10cm)
			(b5) circle (.10cm)
			(b6) circle (0.10cm)
			(b7) circle (.10cm)
			(b8) circle (.10cm)
			(b9) circle (.10cm)
			(b10) circle (.10cm)
			(b11) circle (0.10cm)
			(b12) circle (.10cm)
			(b13) circle (.10cm)
			(b14) circle (.10cm)
			(b15) circle (.10cm)
			(b16) circle (.10cm)
			(b17) circle (0.10cm)
			(b18) circle (.10cm)
			(b19) circle (.10cm)
			(b20) circle (.10cm)
			(b21) circle (.10cm)
			(b22) circle (0.10cm)
			(b23) circle (.10cm)
			(b24) circle (.10cm)
			(b25) circle (.10cm)
			(b26) circle (.10cm)
			(b27) circle (0.10cm)
			(b28) circle (.10cm)
			(b29) circle (.10cm)
			(b30) circle (0.10cm);

			\draw[thick,red, fill=red] 
			(r1) circle (0.10cm)
			(r2) circle (.10cm)
			(r3) circle (.10cm)
			(r4) circle (.10cm)
			(r5) circle (.10cm)
			(r6) circle (0.10cm)
			(r7) circle (.10cm)
			(r8) circle (.10cm)
			(r9) circle (.10cm)
			(r10) circle (.10cm)
			(r11) circle (0.10cm)
			(r12) circle (.10cm)
			(r13) circle (.10cm)
			(r14) circle (.10cm)
			(r15) circle (.10cm)
			(r16) circle (.10cm)
			(r17) circle (0.10cm)
			(r18) circle (.10cm)
			(r19) circle (.10cm)
			(r20) circle (.10cm)
			(r21) circle (.10cm)
			(r22) circle (0.10cm)
			(r23) circle (.10cm)
			(r24) circle (.10cm)
			(r25) circle (.10cm)
			(r26) circle (.10cm)
			(r27) circle (0.10cm)
			(r28) circle (.10cm)
			(r29) circle (.10cm)
			(r30) circle (0.10cm);

			\draw[cyan,line width=1.5pt,red] (r2) to[out=-60,in=-150](r3)to[out=-60,in=-160](r4);
			\draw[cyan,line width=1.5pt,red] (r5) to[out=-90,in=180](r6);
			\draw[cyan,line width=1.5pt,red] (r11) to[out=100,in=10](r12);
			\draw[cyan,line width=1.5pt,red] (r14) to[out=80,in=-10](r15);
			\draw[cyan,line width=1.5pt,red] (r16) to[out=50,in=-20](r17);
			\draw[cyan,line width=1.5pt,red] (r19) to[out=30,in=-50](r20)to[out=20,in=-50](r21);
			\draw[cyan,line width=1.5pt,red] (r22) to[out=10,in=-50](r23)to[out=0,in=-60](r24);
			\draw[cyan,line width=1.5pt,red] (r25) to[out=-20,in=-90](r26)to[out=-50,in=-100](r27);
			\draw[cyan,line width=1.5pt,red] (r29) to[out=30,in=-20](-38.5:5.8)to[out=160,in=80](r28)to[out=160,in=80](-55:4.8)to[out=-100,in=-160](r30);

			\draw [red, line width=1.5pt] (r8) to (r9);
			\draw [red, line width=1.5pt] (r9) to (r30);
			\draw [red, line width=1.5pt] (r8) to (r28);

			\draw[cyan,line width=1.5pt,red] (r1) to[out=-90,in=3](r18);
			\draw[cyan,line width=1.5pt,red] (r26) to[out=-60,in=5](r18);
			\draw[cyan,line width=1.5pt,red] (r23) to[out=-30,in=10](r18);
			\draw[cyan,line width=1.5pt,red] (r20) to[out=0,in=10](r18);
			\draw[cyan,line width=1.5pt,red] (r3) to[out=-115,in=2](r18);
			\draw[cyan,line width=1.5pt,red] (r6) to[out=-150,in=0](r18);
			\draw[cyan,line width=1.5pt,red] (r6) to[out=-132,in=10](r17);
			\draw[cyan,line width=1.5pt,red] (r7) to[out=-145,in=5](r17);
			\draw[cyan,line width=1.5pt,red] (r8) to[out=-170,in=0](r17);
			\draw[cyan,line width=1.5pt,red] (r8) to[out=-160,in=30](r15);
			\draw[cyan,line width=1.5pt,red] (r8) to[out=-155,in=50](r13);
			\draw[cyan,line width=1.5pt,red] (r8) to[out=-145,in=70](r11);
			\draw[cyan,line width=1.5pt,red] (r8) to[out=-140,in=80](r10);

			\draw[red] (-6.3,1.5) node {{ $x_{4}$}};
			\draw[red] (-4.6,3) node {{ $y_{7}$}};
			\draw[red] (-3.5,3.5) node {{ $\widehat{x}_{3}$}};
			\draw[red] (-1.9,3.8) node {{ $y_{4}$}};
			\draw[red] (-0.3,4) node {{ $y_{3}$}};
			\draw[red] (1.5,2.8) node {{ $\widehat{x}_{1}$}};
			\draw[red] (-0.1,1) node {{ $z$}};
			\draw[red] (2,0) node {{ $\widehat{x}_{4}$}};
			\draw[red] (-2.8,-1.5) node {{ $y_{6}$}};
			\draw[red] (-2.8,-3.1) node {{ $y_{5}$}};
			\draw[red] (-1.5,-4.5) node {{ $\widehat{x}_{2}$}};
			\draw[red] (0.1,-5.8) node {{ $y_{2}$}};
			\draw[red] (4.7,-2.2) node {{ $y_{1}$}};
			\draw[red] (6,-3.2) node {{ $x_{0}$}};	
			\draw[black,thick,bend right,->,>=stealth] (-7.2,1.3) to node[below  ]{\tiny$\mathbf{c_{16}}$} (-6.8,1.1);
			\draw[black,thick,bend right,->,>=stealth] (-6.8,1.8) to node[right]{\tiny$\mathbf{c_{15}}$} (-6.8,2.4);
			\draw[black,thick,bend right,->,>=stealth] (-5.8,1.1) to node[below]{\tiny$\mathbf{c_{14}}$} (-6.8,1);
			\draw[black,thick,bend right,->,>=stealth] (-5,1)to node[above right]{\tiny$\mathbf{\widehat{b}_{6}}$} (-5.8,1.1) ;
			\draw[black,thick,bend right,->,>=stealth] (-3,1.9)to node[above right]{\tiny$\mathbf{\widehat{b}_{3}}$} (-4,2) ;
			\draw[black,thick,bend right,->,>=stealth] (-2,1.9)to node[above right]{\tiny$\mathbf{c_{9}}$} (-2.8,2.1) ;
			\draw[black,thick,bend right,->,>=stealth] (-2.3,0.75)to node[right]{\tiny$\mathbf{\widehat{b}_{2}}$} (-3,1.2) ;
			\draw[black,thick,bend right,->,>=stealth] (-6,4.1)to node[below]{\tiny$\mathbf{c_{13}}$} (-5.3,4.4) ;
			\draw[black,thick,bend right,->,>=stealth] (-5.3,4.5)to node[right]{\tiny$\mathbf{c_{12}}$} (-5.2,5) ;
			\draw[black,thick,bend right,->,>=stealth] (-3.6,6.25)to node[below]{\tiny$\mathbf{\widehat{a}_{6}}$} (-3.05,6.25) ;
			\draw[black,thick,bend right,->,>=stealth](-3,6.25) to node[below right]{\tiny$\mathbf{\widehat{a}_{3}}$} (-2.6,6.7) ;
			\draw[black,thick,bend right,->,>=stealth] (1.1,7.1)to node[left]{\tiny$\mathbf{c_{8}}$} (1.6,6.5) ;
			\draw[black,thick,bend right,->,>=stealth] (1.65,6.5)to node[below]{\tiny$\mathbf{c_{7}}$} (2.5,6.75) ;
			\draw[black,thick,bend right,->,>=stealth](5,5) to node[left]{\tiny$\mathbf{\widehat{a}_{2}}$} (4.8,4.3) ;
			\draw[black,thick,bend right,->,>=stealth](4,3.8) to node[below left]{\tiny$\mathbf{\widehat{a}_{1}}$} (4.3,3.4) ;
			\draw[black,thick,bend right,->,>=stealth] (4.7,-3.7)to node[right]{\tiny$\mathbf{c_{1}}$} (4.5,-3) ;
			\draw[black,thick,bend right,->,>=stealth] (4.35,-3)to node[above]{\tiny$\mathbf{c_{5}}$} (3.5,-3.5) ;
			\draw[black,thick,bend right,->,>=stealth] (2.8,-4.5)to node[left]{\tiny$\mathbf{c_{6}}$} (3.5,-5.5) ;
			\draw[black,thick,bend right,->,>=stealth](4.2,-5.7) to node[above]{\tiny$\mathbf{c_{3}}$}  (3.55,-5.8);
			\draw[black,thick,bend right,->,>=stealth](-6.9,-1.5) to node[below right]{\tiny$\mathbf{\widehat{a}_{5}}$}  (-6.5,-1.05);
			\draw[black,thick,bend right,->,>=stealth](-6,-3.9) to node[ right]{\tiny$\mathbf{c_{10}}$}  (-6,-3.2);
			\draw[black,thick,bend right,->,>=stealth](-2.2,-6.2) to node[above]{\tiny$\mathbf{\widehat{a}_{4}}$}  (-3.1,-6.3);
			\draw[black,thick,bend right,->,>=stealth](-1,-0.2) to node[ right]{\tiny$\mathbf{c_{17}}$}  (-1,0.4);
			\draw[black,thick,bend right,->,>=stealth](-0.5,-0.65) to node[ right]{\tiny$\mathbf{\widehat{a}_{4}}$}  (-0.5,-0.1);
			\draw[black,thick,bend right,->,>=stealth](-0.58,-0.69) to node[ left]{\tiny$\mathbf{\widehat{b}_{5}}$}  (-0.5,-1.3);
			\draw[black,thick,bend right,->,>=stealth](-0.1,-1.2) to node[left]{\tiny$\mathbf{c_{11}}$}  (-0.1,-2.1);
			\draw[black,thick,bend right,->,>=stealth](1,-1.65) to node[below  left]{\tiny$\mathbf{\widehat{b}_{4}}$}  (1.2,-2.7);
			\draw[black,thick,bend right,->,>=stealth](1.5,-2.6) to node[below  left]{\tiny$\mathbf{\widehat{b}_{1}}$}  (2,-3.6);
			\draw[black,thick,bend right,->,>=stealth](5.5,-1.7) to node[below]{\tiny$\mathbf{c_{2}}$}  (6,-2.1);
			\draw[black,thick,bend right,->,>=stealth](5.4,-0.9) to node[below left]{\tiny$\mathbf{c_{4}}$}  (5.6,-1.5);		
			
	\end{tikzpicture}	
	\end{center}
 \caption{The cutting surface $(\cals_\zg,\calm_\zg,\zD^*_\zg)$ obtained by cutting $(\cals,\calm,\zD^*)$ along $\zg$ in Figure \ref{fig:apd1}.} 
	\label{fig:apd3} 
\end{figure}

\begin{figure}
	\begin{center}
	\begin{tikzpicture}[>=stealth, scale=.9]
			
			\path 
			(-7,0) coordinate (1)
			(-8,1) coordinate (2)
			(-6,1)  coordinate (3)
			(-7,2)  coordinate (4)
			(-8,3) coordinate (5)
			(-6,3) coordinate (6)
			(-4,3) coordinate (7)
			(-5,4) coordinate (8)
			(-4,5) coordinate (9)
			(-3,6)  coordinate (10)
			(-4,7)  coordinate (11)
			(-2,7) coordinate (12)
			(-3,8) coordinate (13)
			(-1,6) coordinate (14)
			(0,5) coordinate (15)
			(1,6) coordinate (16)
			(2,7)  coordinate (17)
			(2,5)  coordinate (18)
			(3,6) coordinate (19)
			(1,4) coordinate (20)
			(3,4) coordinate (21)
			(5,4) coordinate (22)
			(4,3)  coordinate (23)
			(6,3)  coordinate (24)
			(3,2) coordinate (25)
			(5,2) coordinate (26)
			(7,2) coordinate (27)
			(4,1) coordinate (28)
			(6,1) coordinate (29)
			(5,0) coordinate (30);

			\draw 
			(1) node {$\bullet$}
			(2) node {$x_0$}
			(3) node {$\bullet$}
			(4) node {$y_1$}
			(5) node {$\bullet$}
			(6) node {$y_2$}
			(7) node {$\bullet$}
			(8) node {$\widehat{x}_2$}
			(9) node {$y_5$}
			(10) node {$y_6$}
			(11) node {$\bullet$}
			(12) node {$\widehat{x}_4$}
			(13) node {$\bullet$}
			(14) node {$\bullet$}
			(15) node {$z$}
			(16) node {$\widehat{x}_1$}
			(17) node {$\bullet$}
			(18) node {$y_3$}
			(19) node {$\bullet$}
			(20) node {$\bullet$}
			(21) node {$y_4$}
			(22) node {$\bullet$}
			(23) node {$\widehat{x}_3$}
			(24) node {$\bullet$}
			(25) node {$\bullet$}
			(26) node {$y_7$}
			(27) node {$\bullet$}
			(28) node {$\bullet$}
			(29) node {$x_4$}
			(30) node {$\bullet$};
            \draw (4.9,2.7) node {$\widehat{\bbb}_6$};

            \draw [thick,->] (-7.8,0.8) to node[below left]{$\ccc_{3}$}(-7.2,0.2);
			\draw [thick,->] (-7.2,1.8) to node[above left ]{$\ccc_2$}(-7.8,1.2);
			\draw [thick,->] (-6.8,1.8) to node[above right]{$\ccc_5$}(-6.2,1.2);
			\draw [thick,->] (-6.2,0.8) to node[below right ]{$\ccc_6$}(-6.8,0.2);
			\draw [thick,->] (-7.8,2.8) to node[above right ]{$\ccc_1$}(-7.2,2.2);
			\draw [thick,->] (-6.2,2.8) to node[below right ]{$\ccc_4$}(-6.8,2.2);
			\draw [thick,->] (-5.2,3.8) to node[ above left ]{$\widehat{\bbb}_1$}(-5.8,3.2);
			\draw [thick,->] (-4.8,3.8) to node[above right]{$\widehat{\aaa}_4$}(-4.2,3.2);
			\draw [thick,->] (-4.2,4.8) to node[ above left ]{$\widehat{\bbb}_4$}(-4.8,4.2);
			\draw [thick,->] (-3.2,5.8) to node[ above left ]{$\ccc_{11}$}(-3.8,5.2);
			\draw [thick,->] (-2.2,6.8) to node[ below right ]{$\widehat{\bbb}_5$}(-2.8,6.2);
			\draw [thick,->] (-3.8,6.8) to node[ above right ]{$\ccc_{10}$}(-3.2,6.2);
		    \draw [thick,->] (-2.8,7.8) to node[ above right ]{$\widehat{\aaa}_5$}(-2.2,7.2);
			\draw [thick,->] (-1.8,6.8) to node[ above right ]{$\widehat{\aaa}_7$}(-1.2,6.2);
			\draw [thick,->] (-0.8,5.8) to node[ below left ]{$\ccc_{17}$}(-0.2,5.2);
			\draw [thick,->] (0.8,5.8) to node[ above left ]{$\widehat{\aaa}_1$}(0.2,5.2);
			\draw [thick,->] (1.8,6.8) to node[ above left ]{$\widehat{\aaa}_2$}(1.2,6.2);
			\draw [thick,->] (1.8,4.8) to node[ above left ]{$\ccc_7$}(1.2,4.2);
			\draw [thick,->] (2.8,5.8) to node[below right ]{$\ccc_8$}(2.2,5.2);
			\draw [thick,->] (3.8,2.8) to node[ above left ]{$\widehat{\aaa}_3$}(3.2,2.2);
			\draw [thick,->] (4.8,3.8) to node[below right ]{$\widehat{\aaa}_6$}(4.2,3.2);
			\draw [thick,->] (4.8,1.8) to node[ above left ]{$\ccc_{12}$}(4.2,1.2);
			\draw [thick,->] (5.8,2.8) to node[below right ]{$\ccc_{13}$}(5.2,2.2);
			\draw [thick,->] (5.8,0.8) to node[ below right ]{$\ccc_{15}$}(5.2,0.2);
			\draw [thick,->] (6.8,1.8) to node[below right ]{$\ccc_{16}$}(6.2,1.2);
			\draw [thick,->] (1.2,5.8) to node[ above right ]{$\widehat{\bbb}_2$}(1.8,5.2);
			\draw [thick,->] (2.2,4.8) to node[above right  ]{$\ccc_9$}(2.8,4.2);
			\draw [thick,->] (3.2,3.8) to node[above right ]{$\widehat{\bbb}_3$}(3.8,3.2);
			\draw [thick,->] (4.2,2.8) to (4.8,2.2);
			\draw [thick,->] (5.2,1.8) to node[above right ]{$\ccc_{14}$}(5.8,1.2);
\end{tikzpicture}
\end{center}
\caption{The gentle algebra associated with the cutting surface $(\cals_\zg,\calm_\zg,\zD^*_\zg)$ given in Figure \ref{fig:apd3}, which can be directly constructed by the general rule given above.} 
	\label{fig:apd4} 
\end{figure}

\end{document}